 \numberwithin{equation}{section}
\newtheorem{theorem}{Theorem}
\newtheorem{proposition}[theorem]{Proposition}
\newtheorem{lemma}[theorem]{Lemma}
\newtheorem{corollary}[theorem]{Corollary}
\newtheorem{satz}{Theorem}
\theoremstyle{remark}
\newtheorem{definition}{Definition}
\newtheorem{remark}{Remark}
\newtheorem{example}{Example}
\definecolor{orange}{rgb}{1,0.5,0}
\renewcommand{\Im}{\operatorname{Im}}
\renewcommand{\Re}{\operatorname{Re}}
\newcommand{\intd}{\mathrm{d}} 
\newcommand{\lmr}{\operatorname{lmr}} 
\newcommand{\hcap}{\operatorname{hcap}}
\newcommand{\lt}{\underline{t}} 
\newcommand{\gt}{\overline{t}} 
\newcommand{\ltau}{\underline{\tau}} 
\newcommand{\gtau}{\overline{\tau}} 
\newcommand{\lenpar}{0.2\baselineskip} 
\newcommand{\scalefactor}{0.9} 
\newcommand{\C}{\mathbb{C}}
\newcommand{\D}{\mathbb{D}}
\newcommand{\R}{\mathbb{R}}
\newcommand{\N}{\mathbb{N}}
\newcommand{\Ha}{\mathbb{H}}
\newcommand{\LandauO}{\mathcal{O}} 
\newcommand{\eps}{\varepsilon}
\newcommand{\diam}{\operatorname{diam}}
\newcounter{cntselflist}
\newenvironment{selflist}{\begin{list}{\mbox{}\hspace{0.5cm}\mbox{}\arabic{cntselflist})\mbox{ }}{
	\setlength\itemindent{0pt}
	\setlength\leftmargin{0pt}
	\setlength\itemsep{3pt}
	\setlength\labelwidth{0pt}
	\setlength\labelsep{0pt}
	\usecounter{cntselflist}
	}  }{\end{list}}
\begin{document}
\parindent 0pt 

\setcounter{section}{0}

\author{Christoph B\"ohm, Sebastian Schlei\ss inger\thanks{Partially supported by the ERC grant  ``HEVO - Holomorphic Evolution Equations'' no. 277691.}}
 \title{
 The Loewner equation for multiple slits, multiply connected domains and branch points}

\date{\today}
\maketitle

\begin{abstract}
    Let $\gamma_1,\gamma_2:[0,T]\to \overline{\D}\setminus\{0\}$ be parametrizations
	of two slits $\Gamma_1:=\gamma(0,T], \Gamma_2=\gamma_2(0,T]$ such that 
	$\Gamma_1$ and $\Gamma_2$ are disjoint. \\
	Let $g_t$ to be the unique normalized conformal mapping from 
	$\D\setminus (\gamma_1[0,t]\cup \gamma_2[0,t])$ onto $\D$ with
	$g_t(0)=0,$ $g'_t(0)>0$. Furthermore, for $k=1,2$, denote by $h_{k;t}$ 
	the unique normalized conformal mapping from 
	$\D\setminus \gamma_k[0,t]$ onto $\D$ with
	$h_{k;t}(0)=0,$ ${h'_{k;t}(0)}>0$.\\
	Loewner's famous theorem (\cite{Loewner:1923}) can be stated in the following way: 
	The function $t\mapsto h_{k;t}$
	is differentiable at $t_0$ if and only if $t\mapsto \log(h_{k;t}'(0))$ is differentiable at 
	$t_0$.\\
	In this paper we compare the differentiability of $t\mapsto h_{k;t}$ with that of 
	$t\mapsto g_t.$ We show that the situation is more complicated
	in the case $t_0=0$ with $\gamma_1(0)=\gamma_2(0).$\\
	Furthermore, we also look at this problem in the case of a multiply connected domain with 
	its corresponding Komatu-Loewner equation.\\
	\end{abstract}

	\maketitle

	
\section{Introduction and results}

\subsection{The main results}

\textbf{The simply connected case}\\

By $\D:=\{z\in\C \,|\, |z|<1\}$ we denote the unit disk.\\
	
	Let $\gamma\colon[0,T]\to \overline{\D}$ be a simple curve (i.e. $\gamma$ is continuous and 
	injective) with $\Gamma:=\gamma(0,T]\subset \D\setminus \{0\}$
	and $\gamma(0)\in\partial \D.$ In the following such a set $\Gamma$ will be called
	\emph{slit}.\\
	
	For every $t\in[0,T]$, the domain $\Omega_t:=\D\setminus \gamma[0,t]$
	is  simply connected and it can be mapped onto $\D$ by a conformal map
	$g_t:\Omega_t\to \D.$\\
	This mapping is unique if we require the normalization $g_t(0)=0,$ $g'_t(0)>0$.	
	The function $g_t'(0)$ is increasing and $g_t'(0)\geq 1$ for all $t$ as a consequence of
	the Schwarz lemma. The \emph{logarithmic mapping radius} is defined as
	$\lmr(g_t):=\log(g_t'(0)).$\\
	
	In his much celebrated paper from 1923 (\cite{Loewner:1923}), Loewner considered
	the question whether the function $t\mapsto g_t$ could be differentiable, even though 
	there are no smoothness assumptions on $\Gamma.$
	Loewner's famous theorem can be stated in the following way: \\
	The differentiability of $t\mapsto \lmr(g_t)$ is equivalent to the 
	differentiability of the function
	$t\mapsto g_t,$	more precisely the following statement holds;
	see, e.g., Theorem 2 in \cite{BoehmLauf}.
	\begin{satz}\label{Loewner_eq}
	The function $c(t):=\lmr(g_t)$ is differentiable at $t=t_0$ if and only if the
	family $\{g_t\}_{t\in[0,T]}$ 
	is differentiable at $t=t_0$, i.e. for every $z\in \D\setminus \Gamma,$ the function 
	$t\mapsto g_t(z)$ is differentiable at $t=t_0.$ In this case, $g_t(z)$ satisfies the following 
	differential equation:
	\begin{equation}
         \dot{g}_{t_0}(z) = \dot{c}(t_0)\cdot g_{t_0}(z) \cdot \frac{\xi(t_0)+g_{t_0}(z)}{\xi(t_0)-g_{t_0}(z)},
	\end{equation}
	where $\xi(t_0)= \lim_{z\to \gamma(t_0)} g_{t_0}(z).$
	\end{satz}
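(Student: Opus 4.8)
The plan is to prove the two implications separately, with the substantial work lying in the direction \quotes{$c$ differentiable $\Rightarrow$ $g$ differentiable}. Throughout, for $t_0\le t$ set $\varphi_{t_0,t}:=g_t\circ g_{t_0}^{-1}$; this is the normalized conformal map from $\D\setminus g_{t_0}(\gamma(t_0,t])$ onto $\D$ fixing $0$ with positive derivative, and the chain rule gives $\varphi_{t_0,t}'(0)=g_t'(0)/g_{t_0}'(0)$, so that $\log\varphi_{t_0,t}'(0)=c(t)-c(t_0)\ge 0$ by monotonicity of the mapping radius. The first step is a Herglotz representation: the inverse $\psi_{t_0,t}:=\varphi_{t_0,t}^{-1}$ is a self-map of $\D$ fixing $0$, so by the Schwarz lemma $p_{t_0,t}(w):=\log\!\big(w/\psi_{t_0,t}(w)\big)$ has nonnegative real part, with the real value $p_{t_0,t}(0)=c(t)-c(t_0)$ at the origin. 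Hence there is a positive measure $\mu_{t_0,t}$ on $\partial\D$ of total mass $c(t)-c(t_0)$ with $p_{t_0,t}(w)=\int_{\partial\D}\frac{u+w}{u-w}\,d\mu_{t_0,t}(u)$. Substituting $w=\varphi_{t_0,t}(\zeta)$ and then $\zeta=g_{t_0}(z)$ converts this into
\begin{equation}
\log\frac{g_t(z)}{g_{t_0}(z)}=\int_{\partial\D}\frac{u+g_t(z)}{u-g_t(z)}\,d\mu_{t_0,t}(u).
\end{equation}

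The crucial geometric input is that the support of $\mu_{t_0,t}$ shrinks to a point as $t\downarrow t_0$. The measure is carried by the prime-end image under $\varphi_{t_0,t}$ of the slit $g_{t_0}(\gamma(t_0,t])$; by Carath\'eodory kernel convergence the maps depend continuously on $t$, this slit contracts to the boundary point $\xi(t_0)=\lim_{z\to\gamma(t_0)}g_{t_0}(z)$, and therefore $\operatorname{supp}\mu_{t_0,t}\to\{\xi(t_0)\}$. Together with the total-mass identity, this yields the weak-$*$ convergence $\frac{1}{t-t_0}\mu_{t_0,t}\to\dot c(t_0)\,\delta_{\xi(t_0)}$ for the right derivative; running the same argument with $\varphi_{t,t_0}$ handles the left derivative. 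Dividing the displayed identity by $t-t_0$, passing to the limit under the integral (justified by the weak-$*$ convergence and the continuity of $u\mapsto\frac{u+g_{t_0}(z)}{u-g_{t_0}(z)}$ near $\xi(t_0)$), and using $\frac{1}{t-t_0}\log\frac{g_t(z)}{g_{t_0}(z)}\to \dot g_{t_0}(z)/g_{t_0}(z)$ gives exactly
\begin{equation}
\dot g_{t_0}(z)=\dot c(t_0)\cdot g_{t_0}(z)\cdot\frac{\xi(t_0)+g_{t_0}(z)}{\xi(t_0)-g_{t_0}(z)},
\end{equation}
which is simultaneously the asserted differentiability of the family and the differential equation.

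For the converse, suppose $t\mapsto g_t(z)$ is differentiable at $t_0$ for every $z$. Since each $g_t$ maps into $\D$, the difference quotients $\frac{1}{h}\big(g_{t_0+h}-g_{t_0}\big)$ are locally uniformly bounded; by Vitali's theorem their pointwise convergence upgrades to locally uniform convergence to an analytic limit, whence the derivatives at $0$ converge as well. Thus $t\mapsto g_t'(0)$ is differentiable at $t_0$, and since $g_{t_0}'(0)>0$ the function $c(t)=\lmr(g_t)=\log g_t'(0)$ is differentiable at $t_0$.

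The main obstacle is the concentration step. Mere continuity of $t\mapsto g_t$ is not enough: one must show both that the slit image $g_{t_0}(\gamma(t_0,t])$ collapses to a single prime end $\xi(t_0)$ and that no mass of $\mu_{t_0,t}$ escapes to the rest of $\partial\D$, which calls for a quantitative form of kernel convergence (for instance an estimate on the harmonic measure of the slit). Establishing that $\xi(t_0)$ exists as a genuine boundary point and that the passage to the limit is uniform in $z$ on compact subsets of $\D\setminus\Gamma$, so that the limiting equation truly holds pointwise, is the part that demands the most care.
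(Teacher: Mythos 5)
Your strategy --- the Herglotz representation $\log\frac{g_t(z)}{g_{t_0}(z)}=\int_{\partial\D}\frac{u+g_t(z)}{u-g_t(z)}\,\intd\mu_{t_0,t}(u)$ with a positive measure of total mass $c(t)-c(t_0)$ carried by the arc $g_t(\gamma[t_0,t])$, followed by concentration of that arc at $\xi(t_0)$ --- is the classical route and essentially the one underlying the paper: Theorem \ref{Loewner_eq} is not proved there but cited from Theorem 2 of \cite{BoehmLauf}, and the paper's own Lemmas \ref{Lem-KLE-Lambda1} and \ref{Lem-KLE-Lambda2} run exactly this argument with the explicit density $-\frac{1}{2\pi}\ln|(g_{t_0}\circ g_t^{-1})(\xi)|\,|\intd\xi|$ in place of your abstract measure. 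The genuine gap is the concentration step, and you concede it yourself in your final paragraph. Carath\'eodory kernel convergence gives locally uniform convergence of $\varphi_{t_0,t}$ to the identity \emph{inside} $\D$, but it controls nothing at the boundary: it does not imply that the arc $g_t(\gamma[t_0,t])$ supporting $\mu_{t_0,t}$ shrinks to a point, and this is the analytic heart of Loewner's theorem. What is needed is a boundary estimate: the normalized length of this arc equals, by conformal invariance, the harmonic measure of the slit $g_{t_0}(\gamma(t_0,t])$ in $\D\setminus g_{t_0}(\gamma(t_0,t])$ seen from $0$, and a Beurling-type projection estimate bounds this by a power of the slit's diameter, which tends to $0$ by continuity of $\gamma$ and of $g_{t_0}$ at the tip prime end; one must also identify the location of the shrinking arc with $\xi(t_0)$. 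The paper outsources precisely this to Proposition 8 of \cite{BoehmLauf} (Remark \ref{rm:continuous}); your write-up leaves it as an acknowledged ``main obstacle'', so the hard implication is not actually established.

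The converse direction as written is also not valid. The justification ``since each $g_t$ maps into $\D$, the difference quotients $\frac1h(g_{t_0+h}-g_{t_0})$ are locally uniformly bounded'' is a non sequitur: boundedness of the $g_t$ only yields the bound $2/|h|$, which blows up, and pointwise convergence of analytic functions does not imply local uniform boundedness (Vitali requires it as a hypothesis, and Osgood's theorem gives locally uniform convergence only on a dense open subset, not necessarily near $0$). The clean repair uses the identity you already derived: fix $z_0\neq 0$, take real parts, and note that $\Re\frac{u+w}{u-w}=\frac{1-|w|^2}{|u-w|^2}>0$; by the concentration step this kernel converges, uniformly for $u\in\operatorname{supp}\mu_{t_0,t}$, to the positive number $\Re\frac{\xi(t_0)+g_{t_0}(z_0)}{\xi(t_0)-g_{t_0}(z_0)}$, so, since $\mu_{t_0,t}(\partial\D)=c(t)-c(t_0)$, differentiability of $t\mapsto\ln|g_t(z_0)|$ at $t_0$ forces existence of $\lim_{t\to t_0}\frac{c(t)-c(t_0)}{t-t_0}$. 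This is exactly how the paper proves the implication (2.)$\Rightarrow$(1.) in Lemma \ref{Lem-KLE-Lambda2}, and it yields the stronger conclusion that differentiability of $t\mapsto g_t(z_0)$ at a single point $z_0$ already suffices.
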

	In the following, we will call $\gamma$ a \emph{$\D$-Loewner parametrization for $\Gamma$ at $t_0$}, if 
	the two equivalent conditions in Theorem \ref{Loewner_eq} hold.
	\begin{remark}
	 Usually, the parametrization of $\Gamma$ is chosen in such a way that $\lmr(g_t)=t.$ In this case, the 
	 mappings $\{g_t\}$ are (continuously) differentiable for all $t\in[0,T].$ Thus,
	 an arbitrary slit $\Gamma$ can be described by a differential equation for 
	 the family $\{g_t\}$. This celebrated idea of Loewner turned
	 out to be quite useful for the	 theory of univalent mappings and its most prominent
	 application nowadays is the stochastic Loewner evolution invented by Schramm in 2000.	 
	\end{remark}
	
	Now let $\gamma_1,\gamma_2\colon[0,T]\to \overline{\D}\setminus\{0\}$ be parametrizations
	of two slits $\Gamma_1:=\gamma_1(0,T], \Gamma_2:=\gamma_2(0,T]$ such that 
	$\Gamma_1$ and $\Gamma_2$ are disjoint. \\
	For a fixed time $t_0$ we will distinguish 
	between two cases:\\
	Either $\gamma_1(t_0)\not =\gamma(t_0)$ (``\emph{disjoint case}'') or 
	$\gamma_1(t_0)=\gamma_2(t_0)$ (``\emph{branch point case}''), which is only possible 
	for $t_0=0$.\\
	
	Again, we can define $g_t$ to be the unique normalized conformal mapping from 
	$\Omega_t:=\D\setminus (\gamma_1[0,t]\cup \gamma_2[0,t])$ onto $\D$ with
	$g_t(0)=0,$ $g'_t(0)>0$.\\
	We are interested in the question, under which conditions the family $\{g_t\}_{t\in[0,T]}$ 
	is differentiable at a point $t_0\in[0,T]$. \\
	Again, a necessary condition is that $c(t):=\lmr(g_t):=\log(g'_t(0))$
	is differentiable at $t=t_0.$ However, this condition is not sufficient anymore,
	see Example \ref{Exa-Counterexample2}.\\
	On the other hand, the two statements are equivalent in the branch point case; see Theorem
	\ref{Simply_multiply}.\\

	In the disjoint case, differentiability of $t\mapsto g_t$
	is guaranteed if both slits are $\D$-Loewner 
	parametrized. More precisely, the following equivalence holds.  
	
	\begin{theorem} \label{The-KLEvsLE_Intro}
	Suppose that $t_0\in[0,T]$ such that $\gamma_1(t_0)\not=\gamma_2(t_0).$  
	Then the following two conditions are equivalent:
	\begin{enumerate}
		\item For $j=1,2$, $\gamma_j$ is a $\D$-Loewner parametrization for $\Gamma_j$ at $t_0.$
		\item The function $t\mapsto g_t(z)$ is differentiable at $t_0$ for every 
			$z\in \Omega_{t_0}$. 
	\end{enumerate}

\end{theorem}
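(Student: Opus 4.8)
The plan is to prove the two implications separately, with the conceptual heart being the reduction of the two-slit problem to two independent single-slit problems via the composition of the individual Loewner maps. Throughout I fix $t_0$ with $\gamma_1(t_0)\neq\gamma_2(t_0)$ and abbreviate the prime point $\xi_k:=\lim_{z\to\gamma_k(t_0)}h_{k;t_0}(z)$ for the boundary images under the single-slit maps.

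For the implication $(1)\Rightarrow(2)$, I would exploit the fact that $g_t$ maps $\D\setminus(\gamma_1[0,t]\cup\gamma_2[0,t])$ onto $\D$, and that near time $t_0$ the two slits are geometrically separated (their tips are distinct, and by continuity of $\gamma_1,\gamma_2$ there are disjoint closed neighborhoods of the two tips). The key idea is a localization argument: since $\gamma_j$ is a $\D$-Loewner parametrization at $t_0$, Theorem \ref{Loewner_eq} gives that $t\mapsto h_{j;t}(z)$ is differentiable at $t_0$ with the stated Loewner ODE. I would then write $g_t$ as a composition involving the maps that uniformize each slit, and track how the normalization $g_t(0)=0$, $g_t'(0)>0$ propagates through the composition. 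Concretely, one considers the map $\Phi_t$ that first removes $\gamma_1[0,t]$ via $h_{1;t}$, which carries $\gamma_2[0,t]$ to a moving slit $\tilde\gamma_{2;t}:=h_{1;t}(\gamma_2[0,t])$ in $\D$; because the two slits stay separated near $t_0$, this image slit depends differentiably on $t$ (the map $h_{1;t}$ and its derivatives extend continuously to a neighborhood of $\gamma_2(t_0)$, which lies away from $\gamma_1$). One then uniformizes the image slit and composes, and differentiability of $t\mapsto g_t(z)$ follows from the chain rule applied to the differentiable building blocks, together with differentiability of the normalization constants, which reduces to differentiability of $\lmr(g_t)=\lmr(h_{1;t})+\lmr(\text{second factor})$.

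For the reverse implication $(2)\Rightarrow(1)$, I would argue by a separation/localization of the Loewner flow. Assuming $t\mapsto g_t(z)$ is differentiable at $t_0$, I want to recover differentiability of each single-slit family $t\mapsto h_{j;t}$, equivalently, by Theorem \ref{Loewner_eq}, of each $t\mapsto\lmr(h_{j;t})$. The strategy is to localize near one tip at a time: fix a small disjoint neighborhood $U_1$ of $\gamma_1(t_0)$ disjoint from $\Gamma_2$ and from a neighborhood of the other tip. On $U_1$, the presence of $\gamma_2[0,t]$ affects $g_t$ only through a factor that is smooth (indeed analytic and differentiable in $t$) because $\gamma_2$ stays uniformly away from $U_1$. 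Hence the ``singular'' part of the boundary behavior of $g_t$ near $\gamma_1(t_0)$ must come entirely from $\gamma_1$, and differentiability of $g_t$ forces the corresponding single-slit uniformizer $h_{1;t}$ to be differentiable. Making this precise is where the main obstacle lies.

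The hardest step, which I would single out, is the reverse direction's localization: one must show that the ``other'' slit contributes only a differentiable (regular) perturbation near the tip of the slit under consideration, so that differentiability cannot be created or destroyed by this regular factor. The cleanest route is probably to write, near $t_0$, a factorization $g_t=\psi_t\circ h_{1;t}$ (resp.\ with roles of the slits exchanged), where $\psi_t$ is the normalized uniformizer of the remaining image slit $h_{1;t}(\gamma_2[0,t])$; since this image slit is bounded away from $\partial\D$ near $\gamma_1(t_0)$ and from the image tip, $\psi_t$ and $\psi_t'$ extend continuously past that region and inherit differentiability in $t$ from differentiability of $g_t$. Inverting the composition then transfers differentiability of $t\mapsto g_t$ to $t\mapsto h_{1;t}$, and symmetrically to $h_{2;t}$. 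I expect the technical crux to be establishing the joint continuity/differentiability of $t\mapsto h_{j;t}(z)$ and its derivatives on the relevant compact sets bounded away from the tips, which is exactly what permits the chain rule to be applied and inverted, and which relies crucially on the standing hypothesis $\gamma_1(t_0)\neq\gamma_2(t_0)$.
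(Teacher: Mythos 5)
There is a genuine gap in both directions of your argument, and in both cases it sits exactly at the step you yourself flag as the ``crux.'' For (1)$\Rightarrow$(2): in the factorization $g_t=\psi_t\circ h_{1;t}$, the map $\psi_t$ uniformizes the slit $h_{1;t}(\gamma_2[0,t])$, which simultaneously \emph{moves} (because $h_{1;t}$ varies in $t$) and \emph{grows} (because $\gamma_2[0,t]$ grows). Theorem \ref{Loewner_eq} does not apply to such a family, and ``the image slit depends differentiably on $t$'' does not imply that its normalized uniformizers do --- that implication is essentially the theorem being proved, in disguise. The quantitative fact actually needed is that transplanting a growing slit by a conformal map multiplies the growth rate of its logarithmic mapping radius by the square of the modulus of the derivative of the transplanting map at the tip; this is the content of the paper's Lemma \ref{Lem-ConnectionMuLambda} ($\lambda_k(t_0)=\alpha_k^2(t_0)\mu_k(t_0)$), whose proof requires the distortion estimate of Lemma \ref{Lem-Inequality} and boundary integral representations of $\lmr$-increments. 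Your appeal to the chain rule and to $\lmr(g_t)=\lmr(h_{1;t})+\lmr(\psi_t)$ hides precisely this content: differentiability of $\lmr(\psi_t)$ is not given by anything in your sketch.

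For (2)$\Rightarrow$(1) the proposed argument is circular: $\psi_t$ is \emph{defined} through $h_{1;t}$, the very map whose differentiability you are trying to establish, so it cannot ``inherit differentiability from $g_t$'' and then pass it back to $h_{1;t}$. The paper's mechanism for decoupling the two slits is different and is the key missing idea: by Lemma 10 of \cite{BoehmLauf} and the mean value theorem, $\ln|g_t(z)/g_{t_0}(z)|$ equals, up to $o(|t-t_0|)$, the linear combination $\frac{1}{2\pi}\sum_{k}\Re\bigl(\Phi(\xi_k(t_0),g_{t_0}(z);t_0)\bigr)c_k(t,t_0)$ of two unknown nonnegative quantities $c_k(t,t_0)$, one per slit. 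One then chooses points $z_1,z_2$, one near each tip, so that the $2\times 2$ matrix $\bigl[\Re\Phi(\xi_k(t_0),g_{t_0}(z_j);t_0)\bigr]$ is diagonally dominant, hence invertible; the assumed differentiability of $t\mapsto g_t(z_j)$ then lets one \emph{solve} for each $c_k(t,t_0)$ separately and conclude that each limit $c_k(t,t_0)/(t-t_0)$ exists, which via the single-slit theory (Lemma \ref{Lem-KLE-Lambda2} and Lemma \ref{Lem-ConnectionMuLambda}) gives differentiability of each $h_{k;t}$. This linear-algebra decoupling, together with the Schwarz-reflection extension of the transition maps near $\zeta_k(t_0)$ (Lemma \ref{Lem-ContinuityAlpha}), is also exactly where the hypothesis $\gamma_1(t_0)\not=\gamma_2(t_0)$ enters --- near a common tip the columns of that matrix would not decouple, and indeed Theorem \ref{counterexample} shows the statement fails there. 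Your localization intuition correctly identifies where the hypothesis should matter, but without a concrete decoupling device of this kind neither implication is proved.
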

For $j=1,2$, let $h_{j;t}$ be the unique conformal mapping from $\D\setminus \gamma_j[0,t]$ 
onto $\D$ with $h_{j;t}(0)=0,$ $h'_{j;t}(0)>0$ and
let $c_j(t):=\lmr(h_{j;t})=\log(h'_{j;t}(0)).$ We will also derive a relation between
$\dot{c}$ and $\dot{c}_j.$ Here we note the simplest case $t_0=0:$\\
If the two equivalent statements in Theorem \ref{The-KLEvsLE_Intro} hold for $t_0=0,$ 
then $c(t)$ is differentiable at $t=0$ with
\begin{equation}\label{disjoint_sum}
\dot{c}(0)=\dot{c}_1(0)+\dot{c}_2(0). 
\end{equation}
 A general relation between $\dot c$ and $\dot c_j$ if $t_0>0$ is given by Theorem \ref{The-KLEvsLE}.

The situation is different for the branch point case.

	\begin{theorem}\label{counterexample}
	There exist two slits $\Gamma_1, \Gamma_2$ in 
	$\D$ with $\overline{\Gamma_1}\cap \overline{\Gamma_2}=\{p\}\subset \partial\D$ with $\D$-Loewner parametrizations $\gamma_k\colon[0,T]\to\Gamma_k$ in
	$[0,T],$
	such that the function $t\mapsto g_t(z),$ $z\in \Omega_T$, is not differentiable at $t=0.$ 
	\end{theorem}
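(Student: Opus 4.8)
The plan is to reduce the statement to the non‑differentiability of the logarithmic mapping radius and then to build the slits from a self‑similar, log‑periodic geometry near the common point $p$. Since we are in the branch point case, the equivalence between differentiability of $c(t):=\lmr(g_t)$ and of the family $\{g_t\}$ (the equivalence quoted just before Theorem~\ref{Simply_multiply}) reduces everything to producing two slits with $\D$‑Loewner parametrizations for which $c$ fails to be differentiable at $t=0$. To control $c$ near $0$ I would pass to a half‑plane chart: fix a conformal map $\psi\colon\D\to\Ha$ with $\psi(p)=0$ and describe both slits in this local coordinate. For small hulls attached to $p$ one has expansions of the form
\begin{equation*}
 \lmr(g_t)=\kappa\,\hcap\bigl(\psi(\gamma_1[0,t]\cup\gamma_2[0,t])\bigr)+(\text{h.o.t.}),\qquad
 \lmr(h_{k;t})=\kappa_k\,\hcap\bigl(\psi(\gamma_k[0,t])\bigr)+(\text{h.o.t.}),
\end{equation*}
with geometric constants $\kappa,\kappa_k>0$, so the whole question becomes one about half‑plane capacities of two slits emanating from $0$.

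Next I would make the geometry self‑similar under the real scaling $z\mapsto\lambda_0 z$ for a fixed $\lambda_0>1$. Concretely, take $\psi(\Gamma_2)$ to be a curve whose angle wiggles log‑periodically, $\theta(r)=\theta_0+\eps\sin\!\bigl(2\pi\log r/\log\lambda_0\bigr)$ with $\theta_0=\pi/4$ and $\eps$ small, and let $\psi(\Gamma_1)$ be its mirror image about the imaginary axis ($\theta_0=3\pi/4$). For small $\eps$ the two curves remain in disjoint angular sectors and meet only at $0$, and each has monotone radius, hence is a simple slit ending at $p$. Self‑similarity then gives $\hcap(\psi(\Gamma_k)\cap\{|z|\le r\})=r^2P(\log r)$ and $\hcap(\psi(\Gamma_1\cup\Gamma_2)\cap\{|z|\le r\})=r^2\Psi(\log r)$, where $P$ and $\Psi$ are $\log\lambda_0$‑periodic and, by the reflection symmetry, the single‑slit factor $P$ is common to both slits. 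I would then parametrize both slits by the common radial scale $R(t)$, chosen so that $R(t)^2P(\log R(t))\equiv a\,t$. This makes $\lmr(h_{k;t})=\kappa_k a\,t+(\text{h.o.t.})$ differentiable, i.e. each $\gamma_k$ is a $\D$‑Loewner parametrization.

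It then remains to read off $c(t)$. Substituting $R(t)^2=a\,t/P(\log R(t))$ into the union capacity gives
\begin{equation*}
 c(t)=\kappa\,R(t)^2\Psi(\log R(t))+(\text{h.o.t.})
 =\kappa a\,\frac{\Psi(\log R(t))}{P(\log R(t))}\,t+(\text{h.o.t.}),
\end{equation*}
so the difference quotient $c(t)/t$ converges as $t\to0^+$ if and only if the $\log\lambda_0$‑periodic function $\Psi/P$ is constant along $\log R(t)\to-\infty$. If $\Psi$ is not a constant multiple of $P$, then $c(t)/t$ oscillates, $c$ is not differentiable at $0$, and the branch‑point equivalence yields the theorem.

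The main obstacle is precisely this last non‑degeneracy: one must prove that the union shape factor $\Psi$ is \emph{not} a constant multiple of the single‑slit factor $P$. For $\eps=0$ (straight slits) both are constants and $\Psi/P$ is constant — which is exactly why straight slits cannot give a counterexample — so the effect is genuinely of order $\eps$. I would therefore carry out a first‑order perturbation in $\eps$, computing the leading log‑periodic Fourier modes of $P$ and of $\Psi$ (the latter through the interaction of the two slits, e.g. via the composition identity $\hcap(A\cup B)=\hcap(A)+\hcap(\phi_A B)$ together with the distortion of $\phi_A$ near $0$) and checking that these modes are not proportional. Two further technical points are routine but need care: the conformal distortion of $\psi^{-1}$ at small scales multiplies the shape factors by $1+\LandauO(r)$ and hence does not affect $\lim_{t\to0^+}c(t)/t$; and the half‑plane expansions of $\lmr$ must be justified with error terms that are $o(t)$ uniformly as $t\to0$.
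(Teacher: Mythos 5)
Your reduction is the same as the paper's: you invoke the branch-point equivalence (Theorem \ref{Simply_multiply}) to replace differentiability of $t\mapsto g_t$ by differentiability of $c(t)=\lmr(g_t)$, transfer to the half-plane where $\lmr$ becomes $\hcap$ up to a factor $2(1+o(1))$ (this is exactly Lemma \ref{chordal_radial_diff}), and build a discretely self-similar pair of slits attached to the common point. Up to here the proposal is sound. But the proof has a genuine gap at precisely the point you flag as ``the main obstacle'': you never show that the log-periodic union factor $\Psi$ fails to be a constant multiple of the single-slit factor $P$. This non-degeneracy \emph{is} the counterexample --- without it, $c(t)/t$ could converge and nothing is proved. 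The proposed remedy (a first-order perturbation in $\eps$ of the two log-periodic Fourier expansions, via the interaction term $\hcap(A\cup B)=\hcap(A)+\hcap(\phi_A B)$) is only a plan, and a risky one: it is not evident that the order-$\eps$ modes of $\Psi$ and $P$ are non-proportional rather than cancelling (the mirror symmetry you impose creates exactly the kind of structure that can force such cancellations), so the argument might have to go to higher order, and in any case it requires a delicate quantitative conformal-distortion estimate that is nowhere carried out.

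It is worth seeing how the paper sidesteps this entire difficulty (Theorem \ref{counter_H}). There, self-similarity is used not to produce an oscillating ratio but to upgrade differentiability to \emph{exact linearity}: if $\tfrac12\cdot\Gamma\subset\Gamma$ and $c$ is differentiable at $0$, then $c(t/4^n)/(t/4^n)=c(t)/t$ forces $c(t)=\dot c(0)\,t$ for all $t$. Linearity is then refuted by two soft strict inequalities from Lemma \ref{hcap}: on one hand $c(T)=\hcap(K_T)>\hcap(\Gamma)=T$, so $\dot c(0)>1$; on the other hand, for a suitable zigzag slit one increment satisfies
\begin{equation*}
\frac{c(t_2)-c(t_1)}{t_2-t_1}
=\frac{\hcap\bigl(g_{K_{t_1}}(\gamma(t_1,t_2])\bigr)}{\hcap\bigl(g_{\gamma(0,t_1]}(\gamma(t_1,t_2])\bigr)}<1,
\end{equation*}
by the strict monotonicity of $\hcap$ under the conformal maps $g_A$ (Lemma \ref{hcap} c), d)), a contradiction. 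No estimate of the size of the oscillation is ever needed; strictness of one inequality suffices. If you want to complete your proof along your own lines, you must either carry out the perturbative computation in full, or replace it by an argument of this qualitative type --- for instance by showing that your construction, too, satisfies an exact scaling law under which differentiability at $0$ implies global linearity of $c$, and then exhibiting a single pair of times violating linearity via Lemma \ref{hcap}.
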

	On the other hand, we also give a condition ensuring differentiability of
	$t\mapsto g_t(z)$ at $t=0$ in this case.
	
	\begin{definition}Let $\alpha\in(0,\pi).$ We say that a simple curve $\gamma\colon[0,T]\to \overline{\D},$ $\gamma(0)\in\partial \D,$
	$\gamma(0,T]\subset \D,$ \emph{approaches $\partial\D$ in $\alpha-$direction}, (see Figure 1)
	if for every $\eps>0$	there exists $s>0$ such that
	$$\gamma(0,s]\subset \{z\in \D \,|\, \alpha-\eps < \arg(\gamma(0)-z)+\arg(\gamma(0))
	-\pi/2 < \alpha + \eps\}.$$
	\end{definition}

	\begin{figure}[h]\label{app} 
\centering 
	\begin{overpic}[scale=0.33]
		{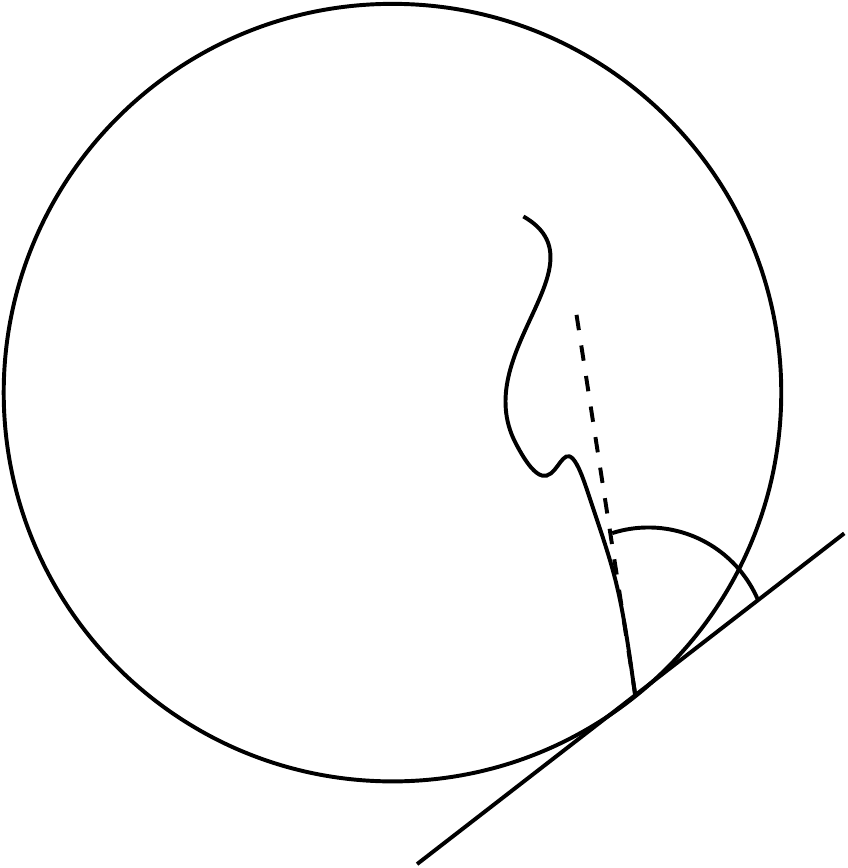}
		\put(74,29){$\alpha$}
	\end{overpic}
	\caption{A slit \emph{approaching $\partial\D$ in $\alpha-$direction}.}
\end{figure}

	\begin{theorem}\label{lines}
	Let $b_1,b_2\geq 0$, $\gamma_1(0)=\gamma_2(0)$ and assume that $\Gamma_j$
	approaches $\partial\D$ in $\alpha_j$-direction
	with $\alpha_1\leq \alpha_2$. Let $\gamma_j$ be a $\D$-Loewner parametrization for $\Gamma_j$
	at $t=0$ for $j=1$ and $j=2$ with $b_1=\dot{c}_1(0), b_2=\dot{c}_2(0).$ Then 
	the function $t\mapsto g_t(z)$ is differentiable at $t=0$ for every 
			$z\in \Omega.$
	\begin{itemize} 
	\item If $b_1=0$ or $b_2=0,$ then $\dot{c}(0)=\max\{b_1,b_2\}.$
	\end{itemize}
	If $b_1, b_2>0,$ then 
	\begin{itemize}
	 \item  $\max\{b_1,b_2\}	\leq  \dot{c}(0) < b_1+b_2,$
	 \item $\dot{c}(0) =  \max\{b_1,b_2\} \quad \text{if and only if} \quad\alpha_1=\alpha_2,  \quad \text{and}$
	 \item $\dot{c}(0)\to b_1+b_2 \quad \text{as}\quad (\alpha_1,\alpha_2)\to (0,\pi).$
	\end{itemize}
	
	\end{theorem}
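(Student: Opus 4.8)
The plan is to push the whole problem to the upper half-plane $\Ha$ and to recognise $\dot c(0)$ as twice the time-derivative of a half-plane capacity. Fix the Möbius map $F\colon\D\to\Ha$ with $F(0)=i$ and $F(\gamma_1(0))=0$, and set $K(t):=F(\gamma_1[0,t]\cup\gamma_2[0,t])$ and $K_j(t):=F(\gamma_j[0,t])$; these are hulls shrinking to the boundary point $0$ as $t\to0^+$. Since the conformal radius at an interior point is conformally covariant, one gets $c(t)=-\log\operatorname{crad}(\Ha\setminus K(t),i)+\log|F'(0)|$, and inserting the hydrodynamic expansion $g_{K(t)}(z)=z+\hcap(K(t))/z+\LandauO(\diam(K(t))^3)$ into $\operatorname{crad}(\Ha\setminus K(t),i)=2\,\Im g_{K(t)}(i)/|g_{K(t)}'(i)|$ yields, after a short computation,
\[
c(t)=2\,\hcap(K(t))+o(t),\qquad c_j(t)=2\,\hcap(K_j(t))+o(t),
\]
because $\diam(K(t))\asymp\sqrt t$ forces all error terms to be $o(t)$ while $\hcap\asymp t$. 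Hence $\dot c(0)=2\lim_{t\to0^+}\hcap(K(t))/t$ and $b_j=2\lim_{t\to0^+}\hcap(K_j(t))/t$ once these limits are shown to exist, and the whole theorem becomes a statement about the half-plane capacity of the union $K(t)=K_1(t)\cup K_2(t)$. Differentiability of $t\mapsto g_t(z)$ will then follow from the branch-point equivalence (Theorem \ref{Simply_multiply}) as soon as $c$ is seen to be differentiable at $0$.

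Next I would extract the two easy bounds from the general properties of $\hcap$. Monotonicity and subadditivity give $\max(\hcap(K_1),\hcap(K_2))\le\hcap(K_1\cup K_2)\le\hcap(K_1)+\hcap(K_2)$, so dividing by $t$ and passing to the limit yields $\max(b_1,b_2)\le\dot c(0)\le b_1+b_2$. The degenerate case $b_1=0$ or $b_2=0$ then falls out by squeezing: if $b_2=0$ then $\hcap(K_2(t))=o(t)$, and the two bounds pinch $\hcap(K(t))/t$ to $\hcap(K_1(t))/t\to b_1/2$, so $\dot c(0)=b_1=\max\{b_1,b_2\}$. No direction information enters here.

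The existence of the limit and the reduction to straight segments come from the degree-two homogeneity of $\hcap$. Rescaling by $1/\sqrt t$, the direction hypothesis (for every $\eps>0$ the slit eventually lies in a cone of half-angle $\eps$ about the ray $e^{i\alpha_j}\R_+$) shows that the blown-up hulls $t^{-1/2}K_j(t)$ converge, in the Carathéodory/Hausdorff sense under which $\hcap$ is continuous, to straight segments $S_j=[0,\lambda_je^{i\alpha_j}]$, where $\lambda_j$ is fixed by $\lambda_j^2\,q(\alpha_j)=b_j/2$ and $q(\alpha):=\hcap([0,e^{i\alpha}])$ is the capacity of the unit segment in direction $\alpha$. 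Continuity of $\hcap$ then gives $\hcap(K(t))/t\to\hcap(S_1\cup S_2)=:L$, so the limit exists, $c$ is differentiable at $0$, and $\dot c(0)=2L=2\hcap(S_1\cup S_2)$, with $2\hcap(S_j)=b_j$. I would record here that $q$ is continuous and positive on $(0,\pi)$, symmetric about $\pi/2$, and satisfies $q(\alpha)\to0$ as $\alpha\to0^+$ or $\alpha\to\pi^-$.

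Finally I would read off the equality statements from the geometry of $S_1\cup S_2$. The condition $\alpha_1=\alpha_2$ holds exactly when $S_1$ and $S_2$ lie on a common ray from $0$, in which case the shorter segment is contained in the longer one, so by (strict) monotonicity of $\hcap$ the union has capacity $\max(\hcap(S_1),\hcap(S_2))$, i.e. $\dot c(0)=\max\{b_1,b_2\}$; when $\alpha_1\neq\alpha_2$ the extra segment sticks out of the longer one's shadow, strict monotonicity forces $\hcap(S_1\cup S_2)>\max$, and the characterisation follows. For the strict upper bound $\dot c(0)<b_1+b_2$ when $b_1,b_2>0$ I would use exact additivity under composition, $\hcap(S_1\cup S_2)=\hcap(S_1)+\hcap\bigl(g_{S_1}(S_2)\bigr)$, and show $\hcap\bigl(g_{S_1}(S_2)\bigr)<\hcap(S_2)$, because the two segments share the base $0$ and $g_{S_1}$ strictly contracts $S_2$ in the capacity sense. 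The asymptotic additivity $\dot c(0)\to b_1+b_2$ as $(\alpha_1,\alpha_2)\to(0,\pi)$ then comes from the fact that in this limit $S_1$ and $S_2$ point in opposite near-tangential directions, so the distortion $g_{S_1}$ produces on $S_2$ becomes negligible and $\hcap\bigl(g_{S_1}(S_2)\bigr)\to\hcap(S_2)$. I expect this last quantitative step — controlling the two-segment capacity and its defect from additivity uniformly in the angles — to be the main obstacle; it is most cleanly handled either through the explicit Schwarz--Christoffel map of $\Ha$ slit along two rays from the origin, or through sharp estimates for $\hcap\bigl(g_{S_1}(S_2)\bigr)$.
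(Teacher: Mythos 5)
Your overall route coincides with the paper's: transfer the problem to $\Ha$ so that $\dot c(0)$ becomes twice a derivative of half-plane capacity (the paper does this with $z\mapsto -i\log z$ in Lemma \ref{chordal_radial_diff}, you with a M\"obius map plus the hydrodynamic expansion -- both work), then use the homogeneity $\hcap(rA)=r^2\hcap(A)$ to blow up by $1/\sqrt t$, identify the limits of the rescaled hulls as two line segments $S_1,S_2$ from the origin with $\hcap(S_j)=b_j/2$, conclude $\dot c(0)=2\,\hcap(S_1\cup S_2)$ via continuity of $\hcap$ under kernel convergence, and read off the statements (i)--(iii) from monotonicity and strict subadditivity of $\hcap$ (Lemma \ref{hcap}); differentiability of $t\mapsto g_t(z)$ then follows from Theorem \ref{Simply_multiply}, exactly as in the paper's deduction of Theorem \ref{lines} from Theorem \ref{thm:1}. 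These parts of your proposal are sound, modulo routine details (e.g.\ the convergence of the blown-up hulls to segments should be argued via cone-confinement plus connectedness and the Carath\'eodory kernel theorem, or by citing Lind--Marshall--Rohde as the paper does; also you state $\max\{b_1,b_2\}\le\dot c(0)\le b_1+b_2$ before the existence of $\dot c(0)$ is available, which is only an ordering issue).

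The genuine gap is the last bullet: $\dot c(0)\to b_1+b_2$ as $(\alpha_1,\alpha_2)\to(0,\pi)$, equivalently $\hcap(S_1\cup S_2)\to\hcap(S_1)+\hcap(S_2)$. You explicitly defer this (``the main obstacle'') and offer only a heuristic -- that $g_{S_1}$ distorts $S_2$ negligibly when the segments are nearly tangent to $\R$ in opposite directions -- together with two possible strategies (Schwarz--Christoffel, or sharp estimates for $\hcap(g_{S_1}(S_2))$). But this is precisely the nontrivial quantitative content of the theorem, and the paper devotes a separate argument to it (Lemma \ref{two_lines}): it expresses the additivity defect $\hcap(S_1)+\hcap(S_2)-\hcap(S_1\cup S_2)$ through the Brownian-motion representation of half-plane capacity, $\hcap(A)=\lim_{y\to\infty}y\,\mathbf{E}^{iy}[\Im(B_{\tau_A})]$, splits the relevant event according to where the path first hits, and kills both pieces using the explicit formula for the tip of a capacity-parametrized ray together with Beurling's estimate, which bounds the probability of hitting the second segment after hitting the first without touching $\R$ by a constant times the height of the hitting point. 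Note also that your heuristic needs care: the difficulty is not only that $g_{S_1}$ moves $S_2$, but that one must control the defect uniformly as both the imaginary parts and the hitting probabilities degenerate ($\Im$ of the tips tends to $0$ while the diameters of the segments blow up like $(\alpha_j(\pi-\alpha_j))^{-1/2}$, so the two effects compete); the paper's proof is organized exactly to resolve this competition. Without a proof of this limit statement, the proposal establishes only the first three bullets of the theorem.
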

	
Note that the very last statement says that the branch point case behaves like the disjoint case 
when $(\alpha_1, \alpha_2)\to(0,\pi),$ see equation \eqref{disjoint_sum}.\\

Finally it is worth mentioning that the converse of Theorem \ref{lines} is wrong;
see Example \ref{Exa-Counterexample3}.\\

\textbf{The multiply connected case}\\

A \emph{circular slit disk} $D$ is an $n$-connected domain of the form 
	$D = \D \setminus (C_1\cup...\cup C_{n-1})$,
	where the $C_j$'s are proper disjoint circular arcs in $\D$ centered at $0$. For any circular slit disk $D$
	and any $u\in\partial \D$, we denote by $w\mapsto \Phi(u,w;D)$
	the unique conformal mapping from $D$ onto the right half-plane minus slits parallel
	to the imaginary axis with $\Phi(u,u;D)=\infty$ and $\Phi(u,0;D)=1.$ For example, 
	$\Phi(u,w;\D)=\frac{u+w}{u-w}.$\\
	
	Now let $\Omega $ be an $n$-connected circular slit disk and let $\gamma\colon[0,T]\to \overline{\D}$
	be a simple curve with $\Gamma:=\gamma(0,T]\subset \Omega\setminus \{0\}$
	and $\gamma(0)\in\partial \D.$ In this case, $\Omega_t:=\Omega\setminus \gamma[0,t]$ is an $n$-connected domain for every $t\in[0,T]$
	and it can be mapped onto a circular slit disk $D_t$ by a conformal map $g_t:\Omega_t\to D_t.$
	This mapping is unique if we require the normalization $g_t(0)=0,$ $g'_t(0)>0$, 
	$g_t(\partial\D)\subset\partial\D$; see \cite{ConwayII}, Chapter 15.6. In the following, we will call
	mappings \emph{normalized} if they satisfy these three conditions.\\

	Again we define the \emph{logarithmic mapping radius}
	$\lmr(g_t):=\log(g_t'(0)).$
	The analog of Theorem \ref{Loewner_eq} is given by the following Theorem; see
	Theorem 5.1 in \cite{BauerFriedrichCSD} or  Theorem 2 in \cite{BoehmLauf}. Loewner equations for  multiply
	connected domains were first studied by Komatu; see \cite{Komatu}, \cite{KomatuZweifach}.
	\begin{satz}\label{LoewnerKomatu_eq} The function $c(t):=\lmr(g_t)$ is differentiable at $t=t_0$ if and only if the
	family $\{g_t\}_{t\in[0,T]}$ 
	is differentiable at $t=t_0$, i.e. for every $z\in \Omega\setminus \Gamma,$ the function 
	$t\mapsto g_t(z)$ is differentiable at $t=t_0.$ In this case, $g_t(z)$ satisfies the following 
	differential equation:
	\begin{equation}
         \dot{g}_{t_0}(z) = \dot{c}(t_0)\cdot g_{t_0}(z) \cdot \Phi(\xi(t_0),g_{t_0}(z);D_{t_0}),
	\end{equation}
	where $\xi(t_0)= \lim_{z\to \gamma(t_0)} g_{t_0}(z).$
	\end{satz}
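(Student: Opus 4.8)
One implication is routine. If $t\mapsto g_t(z)$ is differentiable at $t_0$ for every $z\in\Omega\setminus\Gamma$, then the difference quotients $\tfrac{1}{t-t_0}(g_t-g_{t_0})$ form a locally bounded family of holomorphic maps on $\Omega\setminus\Gamma$ that converges pointwise; by Vitali's theorem the convergence is locally uniform, so the derivatives converge as well. In particular $t\mapsto g_t'(0)$, hence $c(t)=\log g_t'(0)$, is differentiable at $t_0$. The substance of the theorem is the converse together with the explicit equation.

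The plan is to use the evolution-family structure. For $s>t_0$ we have $\Omega_s\subset\Omega_{t_0}$, so $\phi_s:=g_s\circ g_{t_0}^{-1}$ is the normalized conformal map of the circular slit disk $D_{t_0}$ minus the short slit $K_s:=g_{t_0}(\gamma[t_0,s])$ onto the circular slit disk $D_s$; for $s<t_0$ one instead writes $g_s=\phi^{-1}\circ g_{t_0}$ with $\phi=g_{t_0}\circ g_s^{-1}$. Since $\lmr$ is additive under composition, $\lmr(\phi_s)=c(s)-c(t_0)=:\eps_s$, and by continuity of $t\mapsto g_t$ and of the tip the slit $K_s$ emanates from the boundary point $\xi(t_0)=\lim_{z\to\gamma(t_0)}g_{t_0}(z)$ and shrinks to it as $s\to t_0$. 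Writing $w=g_{t_0}(z)$, one factors the difference quotient as
\[
\frac{g_s(z)-g_{t_0}(z)}{s-t_0}=\frac{\phi_s(w)-w}{\eps_s}\cdot\frac{\eps_s}{s-t_0},
\]
and the second factor tends to $\dot c(t_0)$ by hypothesis. Thus everything reduces to the single variational limit $\lim_{s\to t_0}\eps_s^{-1}(\phi_s(w)-w)$.

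This variational formula is the heart of the matter. I would show that $p_s(w):=\eps_s^{-1}(\phi_s(w)-w)$ converges locally uniformly on $D_{t_0}\setminus\{\xi(t_0)\}$ to $w\,\Phi(\xi(t_0),w;D_{t_0})$, via a normal-families compactness step followed by identification of the limit through its boundary data. Because $\phi_s$ is normalized it fixes $0$ with $\phi_s'(0)=e^{\eps_s}$, keeps $\partial\D$ on $\partial\D$, and sends each inner circular arc of $D_{t_0}$ to a concentric arc; in the limit these give $p_0(0)=0$, $p_0'(0)=1$, together with $\Re\bigl(p_0(w)/w\bigr)=0$ on $\partial\D$ and $\Re\bigl(p_0(w)/w\bigr)$ constant on each inner arc, while the removed slit forces a simple pole of $p_0$ at $\xi(t_0)$. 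Hence $F:=p_0/w$ has locally constant real part on $\partial D_{t_0}$, a single simple pole at $\xi(t_0)$, and value $F(0)=p_0'(0)=1$. These are precisely the normalizing conditions defining $\Phi(\xi(t_0),\cdot;D_{t_0})$, whose existence and uniqueness is the canonical parallel-slit mapping theorem (\cite{ConwayII}, Chapter 15.6); in particular the normalization $\Phi(\xi(t_0),0;D_{t_0})=1$ matches $F(0)=1$, which is what fixes the strength of the pole. Therefore $F=\Phi(\xi(t_0),\cdot;D_{t_0})$, i.e. $p_0(w)=w\,\Phi(\xi(t_0),w;D_{t_0})$, and combining with the displayed identity yields
\[
\dot g_{t_0}(z)=\dot c(t_0)\cdot g_{t_0}(z)\cdot\Phi\bigl(\xi(t_0),g_{t_0}(z);D_{t_0}\bigr).
\]

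The main obstacle is the variational step: establishing the compactness and, above all, controlling the geometry of the image slit $K_s$ so that $p_s$ develops no mass away from $\xi(t_0)$ and so that its pole there has exactly the strength forced by the normalization. This is precisely where the multiply connected case differs from Theorem \ref{Loewner_eq}, since the kernel is no longer the explicit expression $\tfrac{\xi+w}{\xi-w}$ but the implicitly defined $\Phi$; the pay-off is that the very boundary value problem defining $\Phi$ also identifies the limit, so once compactness and the matching of boundary data are in place the formula follows with no further computation. Finally one checks that the two one-sided limits agree: for $s<t_0$ the relation $g_s=\phi^{-1}\circ g_{t_0}$ both negates the first-order perturbation of $\phi$ and negates $\eps_s$, and since $D_s\to D_{t_0}$ the relevant kernel is the same $\Phi(\xi(t_0),\cdot;D_{t_0})$, so the same generator $\dot c(t_0)\,w\,\Phi$ results from either side.
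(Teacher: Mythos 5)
You should first be aware that the paper does not prove Theorem \ref{LoewnerKomatu_eq} at all: it is stated as a known result, with the proof delegated to Theorem 2 of \cite{BoehmLauf} (or Theorem 5.1 of \cite{BauerFriedrichCSD}). The technique behind that citation --- which the present paper reuses in Section \ref{sec_3} (Lemmas \ref{Lem-KLE-Lambda1} and \ref{Lem-KLE-Lambda2}) and in the appendix --- is an integral representation: one writes $\log\bigl(g_{t_0}(z)/g_t(z)\bigr)$ as an integral of $-\ln|(g_{t_0}\circ g_t^{-1})(\xi)|$ against the kernel $\Phi(\xi,g_t(z);D_t)$ over the boundary arc swept out by the slit, notes that the total mass of this positive density is exactly the increment of $\lmr$, pulls the kernel out of the integral by the mean value theorem, and concludes with the continuity of $\Phi$ (Lemma 19 of \cite{BoehmLauf}). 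Both directions of the equivalence and the differential equation then drop out simultaneously, because positivity of $\Re\,\Phi$ converts boundedness of the difference quotients of $g_t(z)$ into boundedness of the $\lmr$-increments and vice versa. Your route --- factor through $\phi_s=g_s\circ g_{t_0}^{-1}$, take a normal-families limit of $p_s=\eps_s^{-1}(\phi_s-\mathrm{id})$, and identify the limit through the boundary value problem characterizing $\Phi$ --- is a genuinely different, variational program.

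As written, however, it has real gaps, and they sit exactly where the difficulty of the multiply connected case lives. First, in the ``routine'' direction, Vitali's theorem needs local \emph{uniform} boundedness of the difference quotients, while pointwise convergence (your only hypothesis) gives only pointwise boundedness; this is not cosmetic, and the representation-formula proof exists precisely because positivity supplies that control for free. Second, the central step is announced (``I would show that $p_s$ converges\ldots'') and then conceded to be ``the main obstacle'' rather than proved: you need a quantitative bound $|\phi_s(w)-w|=\LandauO(\eps_s)$ on compacta away from $\xi(t_0)$ (e.g.\ via harmonic-measure estimates for the shrinking slit $K_s$) to get compactness and to rule out mass escaping to the boundary; you need to know the radii of the circular slits of $D_s$ move at rate $\LandauO(\eps_s)$ before you may assert that $\Re\bigl(p_0(w)/w\bigr)$ is \emph{constant} on each inner arc; and you need reflection plus a tip estimate to see that the singularity at $\xi(t_0)$ is at worst a simple pole. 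Third, your uniqueness argument tacitly uses that the harmonic conjugate of $\Re\bigl(p_0(w)/w - \Phi\bigr)$ has vanishing periods around the inner boundary components (automatic here, since $p_0$ is a limit of single-valued functions, but it must be invoked: without it, the difference could be any combination of harmonic measures of the inner arcs, and the pole strength would not be pinned by the normalization at $0$). So the proposal is a coherent program, but not yet a proof; none of its missing estimates are supplied by the paper either, since the paper's ``proof'' of this statement is the citation.
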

	In the following, we will call $\gamma$ an \emph{$\Omega$-Loewner parametrization for $\Gamma$ at $t_0$}, if 
	the two equivalent conditions in Theorem \ref{LoewnerKomatu_eq} hold.\\
	
	The following relation to $\D$-Loewner parametrizations is not surprising.
	\begin{theorem}\label{different_para}
	 Let $t_0\in [0,T.]$ Then $\gamma$ is an $\Omega$-Loewner parametrization for $\Gamma$ at $t_0$ if and only if it is
	 a $\D$-Loewner parametrization for $\Gamma$ at $t_0$.
	\end{theorem}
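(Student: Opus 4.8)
The plan is to reduce the statement, via Theorems \ref{Loewner_eq} and \ref{LoewnerKomatu_eq}, to a comparison of the two logarithmic mapping radii. Write $h_t\colon \D\setminus\gamma[0,t]\to\D$ for the normalized simply connected map ($h_t(0)=0$, $h_t'(0)>0$) and set $\tilde c(t):=\lmr(h_t)$, while $g_t\colon\Omega\setminus\gamma[0,t]\to D_t$ and $c(t):=\lmr(g_t)$ are as in the multiply connected setting. By Theorem \ref{Loewner_eq}, $\gamma$ is a $\D$-Loewner parametrization at $t_0$ exactly when $\tilde c$ is differentiable at $t_0$, and by Theorem \ref{LoewnerKomatu_eq}, $\gamma$ is an $\Omega$-Loewner parametrization at $t_0$ exactly when $c$ is differentiable at $t_0$. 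Thus it suffices to prove that $c$ is differentiable at $t_0$ if and only if $\tilde c$ is.

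The bridge between the two is a factorization. Since $\Omega\setminus\gamma[0,t]=(\D\setminus\gamma[0,t])\setminus(C_1\cup\dots\cup C_{n-1})$ and each $C_j\Subset\D$, the map $h_t$ sends $\Omega\setminus\gamma[0,t]$ conformally onto $\D\setminus\bigcup_j h_t(C_j)$, an $n$-connected domain with outer boundary $\partial\D$. Let $\Theta_t$ be the normalized circular slit map of this domain ($\Theta_t(0)=0$, $\Theta_t'(0)>0$, $\Theta_t(\partial\D)\subset\partial\D$). Then $\Theta_t\circ h_t$ is a normalized map of $\Omega\setminus\gamma[0,t]$ onto a circular slit disk, so by the uniqueness in \cite{ConwayII} we get $g_t=\Theta_t\circ h_t$ and hence
\[ c(t)=\tilde c(t)+\log\Theta_t'(0). \]
Everything now rests on the correction term $R(t):=\log\Theta_t'(0)$, which is a functional of the positions of the fixed compact arcs $C_j$ under $h_t$.

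The main step is a \emph{two-sided smooth dependence lemma}: the assignment sending a family of disjoint analytic arcs $\bigcup_j A_j\Subset\D\setminus\{0\}$ to the pair consisting of its circular slit image $\bigcup_j\Theta(A_j)$ together with the number $\log\Theta'(0)$ is a local diffeomorphism (in a suitable $C^1$ topology on arc configurations). I expect to prove this either from the explicit representation of $\Theta$ and of $\log\Theta'(0)$ through period and harmonic-measure integrals over the arcs, or by an implicit function theorem argument. Granting this, the two implications run in parallel. If $\tilde c$ is differentiable at $t_0$, then by Theorem \ref{Loewner_eq} the whole family $\{h_t\}$ is differentiable at $t_0$; the difference quotients $(h_t-h_{t_0})/(t-t_0)$ form a normal family and converge to the Loewner field locally uniformly on compact subsets of $\D\setminus\gamma[0,t_0]$ away from the tip, so in particular the arcs $h_t(C_j)$ move differentiably in $t$. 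The smooth dependence lemma then makes $R$ differentiable at $t_0$, whence $c=\tilde c+R$ is differentiable at $t_0$. Conversely, if $c$ is differentiable at $t_0$, Theorem \ref{LoewnerKomatu_eq} gives the analogous local-uniform differentiability of $\{g_t\}$, so the circular arcs $g_t(C_j)=\Theta_t(h_t(C_j))$ move differentiably; applying the inverse direction of the lemma recovers the differentiable motion of $h_t(C_j)$, hence of $R$, and therefore $\tilde c=c-R$ is differentiable at $t_0$.

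The principal obstacle is the smooth dependence lemma, and in particular its \emph{invertibility}, which is what powers the direction from $\Omega$- to $\D$-differentiability: one must show that reconstructing the generating arcs (and thereby $\log\Theta'(0)$) from the resulting circular slits is itself differentiable, i.e. that the relevant linearization is an isomorphism. A secondary technical point is the upgrade from the pointwise differentiability supplied by Theorems \ref{Loewner_eq} and \ref{LoewnerKomatu_eq} to the locally uniform differentiability on the compact arcs $C_j$ needed to differentiate the functional $R$; this follows from the normality of the difference quotients together with the fact that each $C_j$ stays at positive distance from the growing slit for $t$ near $t_0$.
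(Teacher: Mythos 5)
Your opening reduction is sound: by Theorems \ref{Loewner_eq} and \ref{LoewnerKomatu_eq} the claim is equivalent to ``$c(t)=\lmr(g_t)$ is differentiable at $t_0$ iff $\tilde c(t)=\lmr(h_t)$ is'', and the factorization $g_t=\Theta_t\circ h_t$, $c(t)=\tilde c(t)+R(t)$ with $R(t)=\log\Theta_t'(0)$, is correct. The proof then breaks at the ``two-sided smooth dependence lemma'', which is false as stated, for dimension reasons: the target data --- a configuration of $n-1$ concentric circular slits plus the number $\log\Theta'(0)$ --- comprises $3(n-1)+1$ real parameters, whereas arc configurations form an infinite-dimensional family; by invariance of domain no continuous map from a family of arc perturbations of dimension larger than $3(n-1)+1$ into this target can be injective, so there is no local diffeomorphism and no ``inverse direction'' to apply. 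The fibres of your map are the (normalized) conformal equivalence classes: all configurations $\{A_j\}$ such that $\D\setminus\bigcup_j A_j$ is mapped onto the same circular slit disk $D$ by its normalized circular slit map. This kills precisely the implication you need most, namely $\Omega$-Loewner $\Rightarrow$ $\D$-Loewner. There you know (after boundary-extension work) only that the circular slits $g_t(C_j)$ move differentiably, and you must conclude that $R(t)$ is differentiable; but along the fibre over a \emph{fixed} slit configuration $D$ the quantity $\log\Theta'(0)$ varies freely as the arcs vary, so differentiable (even constant) motion of the slits puts no constraint whatsoever on $R(t)$. The conclusion is true only because the arcs $h_t(C_j)$ arise in a very special way --- from cutting a growing slit out of a fixed domain --- and your argument never uses that structure; it discards exactly the information that matters. (The forward half of the lemma, smooth dependence of $\log\Theta_t'(0)$ on differentiably moving arcs, is plausible but also only asserted, not proved.)

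For comparison, the paper obtains Theorem \ref{different_para} as the case $m=1$ of Theorem \ref{The-KLEvsLE}, and the mechanism is the opposite of a global reconstruction: it is a direct, two-sided comparison of the increments of the two mapping radii, localized at the driving point. Both $\lmr(g_t)-\lmr(g_{t_0})$ and $\lmr(h_t)-\lmr(h_{t_0})$ are written as boundary integrals of $\log|\cdot|$ of transition maps over the small arc swept out by the slit, and the fixed transition map $T_{t_0}=g_{t_0}\circ h_{t_0}^{-1}$ is analytic at the driving point $\zeta(t_0)$, where it behaves like the power $|z|^{\alpha(t_0)}$ (Lemma \ref{Lem-Inequality}). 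This gives
\[
\lim_{t\to t_0}\frac{\lmr(g_t)-\lmr(g_{t_0})}{\lmr(h_t)-\lmr(h_{t_0})}=\alpha^2(t_0)\in(0,\infty)
\]
(Lemma \ref{Lem-ConnectionMuLambda}), so either derivative exists if and only if the other does --- both implications at once, with no need to recover the motion of the domain from canonical-domain data. If you want to salvage your approach, the lesson is to localize: the entire discrepancy between the two settings is governed by the derivative of the transition map at the driving point, not by the motion of the arcs $C_j$.
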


	Now we pass again to the case of two slits: Let $\gamma_1, \gamma_2\colon[0,T]\to \overline{\D}$ be parametrizations
	of two slits $\Gamma_1=\gamma(0,T]$ and $\Gamma_2=\gamma_2(0,T]$ such that
	$\Gamma_1$ and $\Gamma_2$ are disjoint, $\Gamma_1, \Gamma_2\subset \Omega\setminus\{0\}$ and
	$\gamma_1(0)\not=\gamma_2(0)$ or $\gamma_1(0)=\gamma_2(0)$.\\
		
	Again, we define $g_t$ to be the unique normalized mapping from 
	$\Omega_t:=\Omega\setminus (\gamma_1[0,t]\cup \gamma_2[0,t])$ onto a circular slit disk $\D_t$ and $\lmr(g_t):=\log(g'_t(0))$.\\
	Furthermore, let $h_t$ be the unique normalized mapping from
	$\Psi_{t}:=\D\setminus (\gamma_1[0,t]\cup \gamma_2[0,t])$ onto $\D.$
	\begin{theorem}\label{Simply_multiply}
	 Let $t_0\in [0,T].$ Then the following two statements are equivalent. 
	 \begin{enumerate}
	  \item The function $t\mapsto g_t(z)$ is differentiable at $t_0$ for every 
			$z\in \Omega_{t_0}$.
	  \item The function $t\mapsto h_t(z)$ is differentiable at $t_0$ for every 
			$z\in \Psi_{t_0}$.
	 \end{enumerate}
	 In the branch point case, i.e. $\gamma_1(0)=\gamma_2(0)$ and $t_0=0,$
	 the above statements are equivalent to each of the following two statements.
	  \begin{enumerate}
	 	\item[3.] The function $t\mapsto\lmr(h_t)$ is differentiable at $0$.
		\item[4.] The function $t\mapsto\lmr(g_t)$ is differentiable at $0$.
	 \end{enumerate}		
	\end{theorem}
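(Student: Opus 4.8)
The plan is to treat the general equivalence (1)$\iff$(2) by a direct geometric comparison of $g_t$ and $h_t$, and then, in the branch point case, to obtain the two extra equivalences with (3) and (4) by localizing the single--curve theorems to a hull that shrinks to one boundary point. Throughout write $C:=C_1\cup\dots\cup C_{n-1}$ for the fixed circular slits of $\Omega$ and $K_t:=\gamma_1[0,t]\cup\gamma_2[0,t]$, so that $\Omega_t=\D\setminus(C\cup K_t)$ and $\Psi_t=\D\setminus K_t$ with $\Omega_t\subset\Psi_t$.

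For (1)$\iff$(2) I would note that $h_t$ carries the fixed arcs $C$ to a family of arcs $h_t(C)$ and that $h_t(\Omega_t)=\D\setminus h_t(C)$ is again $n$-connected. Let $\Theta_t$ denote the unique normalized conformal map of $\D\setminus h_t(C)$ onto a circular slit disk. Then $\Theta_t\circ h_t$ is a normalized map of $\Omega_t$ onto a circular slit disk, so uniqueness of such maps (\cite{ConwayII}, Chapter 15.6) forces the factorization $g_t=\Theta_t\circ h_t$ on $\Omega_t$ and $D_t=\Theta_t(\D\setminus h_t(C))$. The whole point is now that the passage between $h_t$ and $g_t$ is mediated by the auxiliary uniformization $\Theta_t$, whose dependence on $t$ is governed entirely by the moving configuration $h_t(C)$. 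Granting a differentiable dependence of the uniformizing map of an $n$-connected domain on its finitely many boundary arcs, differentiability of $t\mapsto h_t$ propagates to $t\mapsto\Theta_t$ and hence, via the factorization, to $t\mapsto g_t$, and conversely. Two routine points must be checked here: that $\{h_t\}$ is locally uniformly bounded, so that differentiability of the difference quotients on the open set $\Omega_t$ extends to all of $\Psi_t$ by a Vitali--Weierstrass argument; and that the coupling $h_t\leftrightarrow g_t$ through $\Theta_t$ is a smooth bijection, so that differentiability can be read off in either direction. This part is the two--slit analogue of Theorem \ref{different_para}.

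In the branch point case $\gamma_1(0)=\gamma_2(0)=:p$ and $t_0=0$, the decisive feature is that $\diam(K_t)\to 0$ and $K_t\to\{p\}$ as $t\to 0^{+}$, i.e. the growing hull concentrates at the single boundary point $p$. I would establish a localized version of Theorem \ref{Loewner_eq} and of Theorem \ref{LoewnerKomatu_eq}: for a family of hulls shrinking to one boundary point, differentiability of the logarithmic mapping radius is equivalent to differentiability of the uniformizing maps, the corresponding Loewner resp. Komatu--Loewner ODE holding with driving point $p$. Applying this in $\D$ to $K_t$ and $h_t$ yields (2)$\iff$(3); applying it in the circular slit disk $\Omega$ to $K_t$ and $g_t$ yields (1)$\iff$(4). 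Combined with (1)$\iff$(2) this closes the chain (1)$\iff$(2)$\iff$(3)$\iff$(4). The implications (2)$\Rightarrow$(3) and (1)$\Rightarrow$(4) are the easy ones, since differentiability of $t\mapsto h_t(z)$ resp. $t\mapsto g_t(z)$ on an open set transfers to the coefficient $\lmr(h_t)=\log h_t'(0)$ resp. $\lmr(g_t)$ by the Cauchy integral formula.

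The main obstacle is the localization lemma, that is, upgrading the single--curve Theorems \ref{Loewner_eq} and \ref{LoewnerKomatu_eq} to two--pronged hulls $K_t=\gamma_1[0,t]\cup\gamma_2[0,t]$. The delicate step is to verify that the classical argument uses only $\diam(K_t)\to 0$ together with $K_t\to\{p\}$, and not the fact that the tip of the hull is a single point; this is exactly what breaks down for $t_0>0$ in the disjoint case, where $K_t\setminus K_{t_0}$ concentrates at the two distinct points $\gamma_1(t_0)\neq\gamma_2(t_0)$, and it accounts for why (3) and (4) are equivalent to (1),(2) only at the branch point. A secondary difficulty is the differentiable dependence of the auxiliary map $\Theta_t$ on the arcs $h_t(C)$ needed in the first part; here I would use continuity and differentiability properties of the Komatu--Loewner kernel $\Phi(\cdot,\cdot;D)$ in the domain $D$ to control $\Theta_t$.
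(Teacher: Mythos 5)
Your plan for the branch-point equivalences is sound in outline and is in fact close to what the paper does: its appendix proves (1)$\Leftrightarrow$(4) (and, taking $\Omega=\D$, (2)$\Leftrightarrow$(3)) by exactly the localization you describe. Concretely, in the representation $\log\frac{g_0(w)}{g_t(w)}=-\frac{1}{2\pi}\int_{s_t}\log|g_t^{-1}(\zeta)|\,\Phi(\zeta,g_t(w);D_t)\,|\intd\zeta|$ the image $s_t$ of the whole two-pronged hull shrinks to a \emph{single} boundary point, so the mean value theorem extracts one kernel value, $\log\frac{g_0(w)}{g_t(w)}=-\Phi(\zeta_t,g_t(w);D_t)\,\lmr(g_t)$ with $\Phi(\zeta_t,g_t(w);D_t)\to\Phi(\gamma_1(0),w;D_0)$, and differentiability of $t\mapsto g_t$ and of $t\mapsto\lmr(g_t)$ become equivalent. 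Your diagnosis of why this breaks for $t_0>0$ (the hull increment concentrates at two distinct points, so no single kernel value can be factored out) is also the correct one.

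The genuine gap is your proof of (1)$\Leftrightarrow$(2), which you need both for the disjoint case and to close your chain in the branch point case. The factorization $g_t=\Theta_t\circ h_t$ is correct, but the entire argument then rests on the clause ``granting a differentiable dependence of the uniformizing map of an $n$-connected domain on its finitely many boundary arcs.'' That is not a routine point: the arcs $h_t(C)$ are not growing hulls but deform non-monotonically in $t$, so none of the Loewner/Komatu--Loewner machinery available here (or in \cite{BoehmLauf}) applies to $\Theta_t$. What you are assuming is a Hadamard-type variation theorem for circular slit uniformizers of multiply connected domains, of comparable depth to the theorem being proved; and your suggestion to control $\Theta_t$ via properties of the kernel $\Phi(\cdot,\cdot;D)$ in $D$ does not produce it --- Lemma 19 of \cite{BoehmLauf} gives only \emph{continuity} in the domain. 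The paper avoids the deforming arcs altogether. For the disjoint case it mediates between $g_t$ and $h_t$ through the two single-slit maps $h_{1;t},h_{2;t}$, applying Theorem \ref{The-KLEvsLE} once in $\Omega$ and once in $\D$; there the comparison quantity is the boundary derivative $\alpha_k(t)=\big|(g_t\circ h_{k;t}^{-1})'(\zeta_k(t))\big|$ at the moving tip, which is controlled by Schwarz reflection and the integral representations of \cite{BoehmLauf}. In the branch point case it closes the chain not through (1)$\Leftrightarrow$(2) but through the direct and much easier comparison (3)$\Leftrightarrow$(4): with $F_t=h_t\circ g_t^{-1}$ one gets $\lmr(g_t)=|F_t'(\zeta_t)|^{-1}\lmr(h_t)$ and $F_t'(\zeta_t)\to1$, since $F_t$ tends to the identity near the branch point. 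If you replace your first part by either of these devices, the rest of your plan goes through.
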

	As a direct consequence of the last two theorems, we can state Theorem \ref{The-KLEvsLE_Intro}
	and Theorem \ref{lines} for the multiply connected case. 
  
	\begin{corollary}\label{reduce}
	Suppose that $t_0\in[0,T]$ such that $\gamma_1(t_0)\not=\gamma_2(t_0).$
	Then the following conditions are equivalent:
	\begin{enumerate}
	\item The function $t\mapsto g_t(z)$ is differentiable at $t_0$ for every 
			$z\in \Omega_{t_0}$. 
	\item For $j=1,2$, $\gamma_j$ is a $\D$-Loewner parametrization for $\Gamma_j$ at $t_0.$

	\end{enumerate}

	\end{corollary}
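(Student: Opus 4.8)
The plan is to treat Corollary \ref{reduce} purely as a transfer result: first reduce the multiply connected differentiability question to the simply connected two-slit setting by means of Theorem \ref{Simply_multiply}, and then invoke Theorem \ref{The-KLEvsLE_Intro} directly. The point is that Theorem \ref{Simply_multiply} already absorbs all the analytic content special to the multiply connected domain $\Omega$, so that nothing new has to be proved about the Komatu-Loewner map $g_t$ itself; the corollary then becomes a formal chaining of two equivalences.

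First I would pin down the notational identification. The map $h_t$, defined as the normalized conformal map from $\Psi_t=\D\setminus(\gamma_1[0,t]\cup\gamma_2[0,t])$ onto $\D$, is exactly the object called $g_t$ in Theorem \ref{The-KLEvsLE_Intro}, and the domain $\Psi_{t_0}$ plays the role of the $\Omega_{t_0}$ appearing there. I would then check that the hypotheses of Theorem \ref{The-KLEvsLE_Intro} are met in the present situation: the slits $\Gamma_1,\Gamma_2$ lie in $\Omega\setminus\{0\}\subset\D\setminus\{0\}$ and are disjoint, each $\gamma_j(0)\in\partial\D$, and $\Psi_t$ is simply connected since we remove two disjoint boundary-anchored simple arcs from $\D$; moreover the standing assumption $\gamma_1(t_0)\neq\gamma_2(t_0)$ is precisely the ``disjoint case'' hypothesis required there. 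Thus Theorem \ref{The-KLEvsLE_Intro} applies verbatim to the family $\{h_t\}$.

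With this in place the argument is a two-step chain. By Theorem \ref{Simply_multiply}, condition (1) of the corollary --- differentiability of $t\mapsto g_t(z)$ at $t_0$ for every $z\in\Omega_{t_0}$ --- is equivalent to differentiability of $t\mapsto h_t(z)$ at $t_0$ for every $z\in\Psi_{t_0}$; I would stress that this equivalence holds at every $t_0$ and needs no disjointness hypothesis. Applying Theorem \ref{The-KLEvsLE_Intro} to $\{h_t\}$, the latter differentiability is in turn equivalent to the statement that for $j=1,2$ the curve $\gamma_j$ is a $\D$-Loewner parametrization for $\Gamma_j$ at $t_0$, which is exactly condition (2). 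Chaining the two equivalences yields (1) $\Leftrightarrow$ (2).

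Since the corollary is a formal consequence of two results already proved, there is no genuine analytic obstacle. The only place that requires care --- and the step I would write out most explicitly --- is the verification that $h_t$ coincides with the two-slit map of Theorem \ref{The-KLEvsLE_Intro} and that the ``disjoint case'' hypothesis transfers unchanged; if desired one can additionally record, via Theorem \ref{different_para}, that a $\D$-Loewner parametrization of a single slit $\Gamma_j$ may equivalently be phrased as an $\Omega$-Loewner parametrization, but this reformulation is not needed for the equivalence itself.
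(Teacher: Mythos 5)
Your proposal is correct and is essentially the paper's own argument: the paper states Corollary \ref{reduce} as a direct consequence of Theorem \ref{Simply_multiply} (passing from the Komatu--Loewner map $g_t$ to the simply connected two-slit map $h_t$, valid at every $t_0$) chained with Theorem \ref{The-KLEvsLE_Intro} applied to $\{h_t\}$ under the hypothesis $\gamma_1(t_0)\neq\gamma_2(t_0)$, exactly the two-step reduction you describe. Your added care about identifying $h_t$ with the map of Theorem \ref{The-KLEvsLE_Intro} and noting that Theorem \ref{different_para} is not needed matches the paper's (implicit) reasoning.
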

	Corollary \ref{reduce} shows that the question whether the function $t\mapsto g_t(z)$ 
	is differentiable at $t_0$ can be reduced to the corresponding question for each single slit with respect to the 
	simply connected domain $\D.$
	\begin{corollary}
	Suppose $\gamma_1(0)=\gamma_2(0)$ and that $\Gamma_j$ approaches $\partial\D$ in $\alpha_j$-direction
	with $\alpha_1\leq \alpha_2$. If $\gamma_j$ is a $\D$-Loewner parametrization for $\Gamma_j$
	at $t=0$ for $j=1$ and $j=2$, then the function $t\mapsto g_t(z)$ is differentiable at $t=0$ for every 
			$z\in \Omega.$
	\end{corollary}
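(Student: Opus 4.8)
The plan is to obtain this statement by chaining together two results already established in the excerpt: Theorem~\ref{lines}, which settles the branch point case for the simply connected domain $\D$, and Theorem~\ref{Simply_multiply}, which transfers differentiability between the $\D$-setting and the circular slit disk setting. No genuinely new analysis should be needed; the corollary is the multiply connected reformulation of Theorem~\ref{lines}, just as Corollary~\ref{reduce} was the multiply connected reformulation of Theorem~\ref{The-KLEvsLE_Intro}.

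First I would check that the hypotheses are exactly those of Theorem~\ref{lines}, read for the domain $\D$. Since $\Omega\subset\D$, the two curves $\Gamma_1,\Gamma_2$ are slits in $\D$ with common endpoint $\gamma_1(0)=\gamma_2(0)\in\partial\D$; by assumption each $\Gamma_j$ approaches $\partial\D$ in $\alpha_j$-direction with $\alpha_1\le\alpha_2$, and each $\gamma_j$ is a $\D$-Loewner parametrization for $\Gamma_j$ at $t=0$. Here it is worth noting that the $\D$-Loewner property of a single slit depends only on the slit as a subset of $\D$, not on the arcs $C_1,\dots,C_{n-1}$, so it is unaffected by passing from $\Omega$ to $\D$. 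Theorem~\ref{lines} then applies and yields that the family of normalized conformal maps from $\D\setminus(\gamma_1[0,t]\cup\gamma_2[0,t])$ onto $\D$ is differentiable at $t=0$. In the notation of the multiply connected section this is precisely the family $\{h_t\}$, so I would record that $t\mapsto h_t(z)$ is differentiable at $t=0$ for every $z\in\Psi_0$, which is statement~(2) of Theorem~\ref{Simply_multiply}.

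Second, since $\gamma_1(0)=\gamma_2(0)$ and $t_0=0$, we are in the branch point case of Theorem~\ref{Simply_multiply}, so statements~(1) and~(2) there are equivalent. Having established statement~(2) in the previous step, I would invoke the implication $(2)\Rightarrow(1)$ to conclude that $t\mapsto g_t(z)$ is differentiable at $t=0$ for every $z\in\Omega_0$, which is the assertion of the corollary. The only point demanding care in the whole argument is the bookkeeping of notation: the mapping denoted $g_t$ in Theorem~\ref{lines} (mapping onto $\D$) is the mapping denoted $h_t$ in the multiply connected section, whereas $g_t$ now denotes the mapping onto the circular slit disk $D_t$. Once this identification is made explicit, the two theorems compose directly and the differentiability of the multiply connected family $t\mapsto g_t(z)$ follows.
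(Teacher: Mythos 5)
Your proposal is correct and follows exactly the route the paper intends: the corollary is stated as a direct consequence of Theorem~\ref{lines} (applied to the slits viewed in $\D$, giving differentiability of the family $h_t$ on $\Psi_0$) combined with the branch point case of Theorem~\ref{Simply_multiply}, whose equivalence $(2)\Leftrightarrow(1)$ transfers the conclusion to the circular slit disk maps $g_t$. Your added remark that the $\D$-Loewner property and the approach-direction hypotheses are intrinsic to the slits in $\D$ and unaffected by the arcs $C_1,\dots,C_{n-1}$ is exactly the notational bookkeeping that makes the composition legitimate.
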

	\begin{remark}
	 All statements presented here can be easily generalized to the case of $m>2$ slits and to slits
	 that are branched within the unit disc.\\
	 We only consider the case of two slits and of one branch point on $\partial \D$ in order to 
	 simplify the notation in the proofs.
	\end{remark}

	\subsection{Organization of the paper}\label{organization}
	
	Before we pass on to the proofs of Theorems 1-5, we will explain how Theorems \ref{The-KLEvsLE_Intro}, 
	\ref{different_para} and \ref{Simply_multiply} follow from a more technical statement. To this end,
	we first introduce some further notations.\\
	
We denote by $\Omega$ an arbitrary circular slit disk.\\
Let $m=1$ or $m=2$ and let $\gamma_1, ..., \gamma_m\colon[0,T]\rightarrow \bar\Omega\setminus\{0\}$
be Jordan arcs with $\gamma_k(0)\in\partial\D$ and $\Gamma_k:=\gamma_k(0,T]\subset\Omega.$ In case $m=2$, we suppose that $\Gamma_1$ and $\Gamma_2$ are disjoint.\\
The normalized conformal mapping $g_t$ is defined as before, i.e. 
$g_t$ maps $\Omega_t:=\Omega\setminus\bigcup_{k=1}^m
\gamma_k[0,t]$ onto the circular slit disk $D_t$.\\
To simplify the notation, we will also write $\Phi(\xi,z;t)$ instead of $\Phi(\xi,z;D_t).$
\begin{center}
\begin{overpic}[scale=\scalefactor]
	{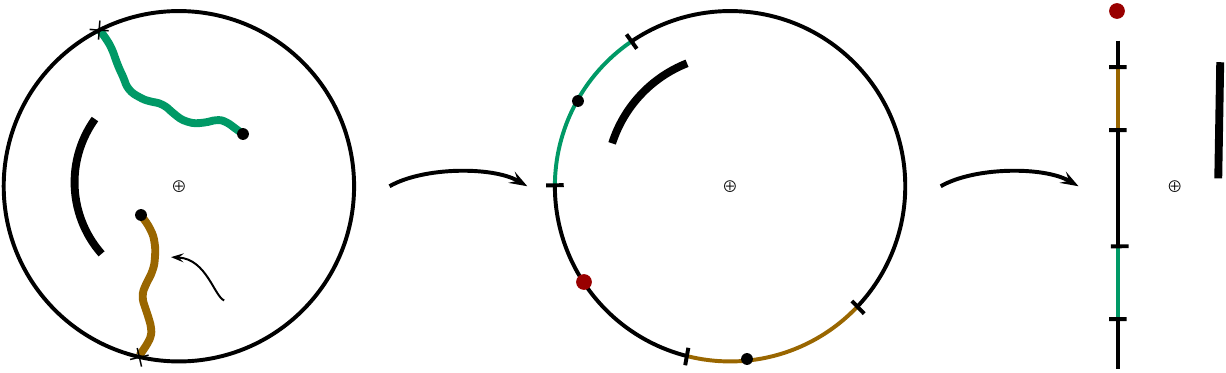}
	\put(14,24){$\Omega_t$}
	\put(18.3,5){$k$}
	\put(12,13){$t$}
	\put(20.4,19){$t$}
	\put(36,18){$g_t$}
	\put(52,10){$D_t$}
	\put(75.5,18){$\Phi(\xi,z;t)$}
	\put(45,6){$\xi$}
	\put(59,3){$\xi_k(t)$}
	\put(92,28){$\infty$}
	\put(15.3,15){$0$}
	\put(60.2,15){$0$}
	\put(96.2,13){$1$}
\end{overpic}
\end{center}
Beside $\Omega_t$, we set $\Delta_{k}(t):=\D\setminus \gamma_k[0,t]$. Note that $\Delta_k(t)$ is 
simply connected, whereas $\Omega_t$ is $n$-connected. 
As before, we denote by $h_{k;t}\colon\Delta_{k}(t)\rightarrow\D$ the unique conformal
mapping with the normalization $h_t(0)=0$ and $h'_t(0)>0$.
\begin{center}
\begin{overpic}[scale=\scalefactor]
	{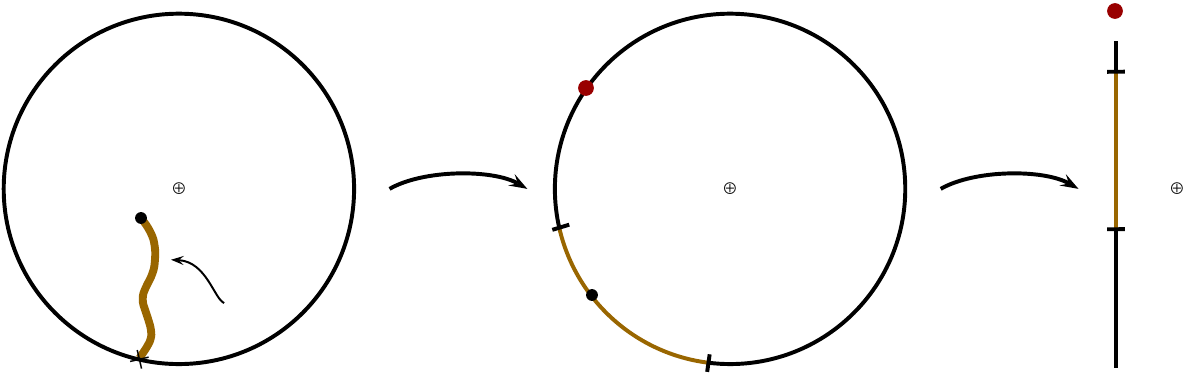}
	\put(14,24){$\Delta_t$}
	\put(19,5){$k$}
	\put(10,13){$t$}
	\put(35,18.3){$h_{k;t}$}
	\put(52,10){$\D$}
	\put(82,19.5){$\frac{\zeta+z}{\zeta-z}$}
	\put(46.6,23.4){$\zeta$}
	\put(42,4){$\zeta_k(t)$}
	\put(94.7,29){$\infty$}
	\put(16,15){$0$}
	\put(62.4,15){$0$}
	\put(99.3,13){$1$}
\end{overpic}
\end{center}
Moreover, we will make use of the driving functions of $g_t$ and $h_{k;t}$ defined by $\xi_k(t):=g_t(\gamma_k(t))$ and 
$\zeta_k(t):=h_{k;t}(\gamma_k(t)),$ respectively, for all $t\in[0,T]$ and all $k=1,\ldots,m.$ 

\begin{remark}\label{rm:continuous}
We note that the driving functions $\xi_k, \zeta_k\colon[0,T]\to\partial\D$ are continuous by Proposition 8
from \cite{BoehmLauf}.
\end{remark}

In order to give a connection between differentiability of $t\mapsto g_t(z)$
and $t\mapsto h_{k;t}(z)$ we need one further abbreviation. Therefore we set
\[
	\alpha_k(t) := \left|\frac{\intd}{\intd z} (g_{t}\circ h^{-1}_{k;t})(z)\big|_{z=\zeta_k(t)}
		\right|
\]
for all $t\in[0,T]$ and all $k=1,\ldots,m$. The derivative is well-defined, as 
$g_{t}\circ h^{-1}_{k,t}$ can be extended by the Schwarz refection principle to an analytic
function at $z=\zeta_k(t)$.
\begin{center}
\begin{overpic}[scale=\scalefactor]
	{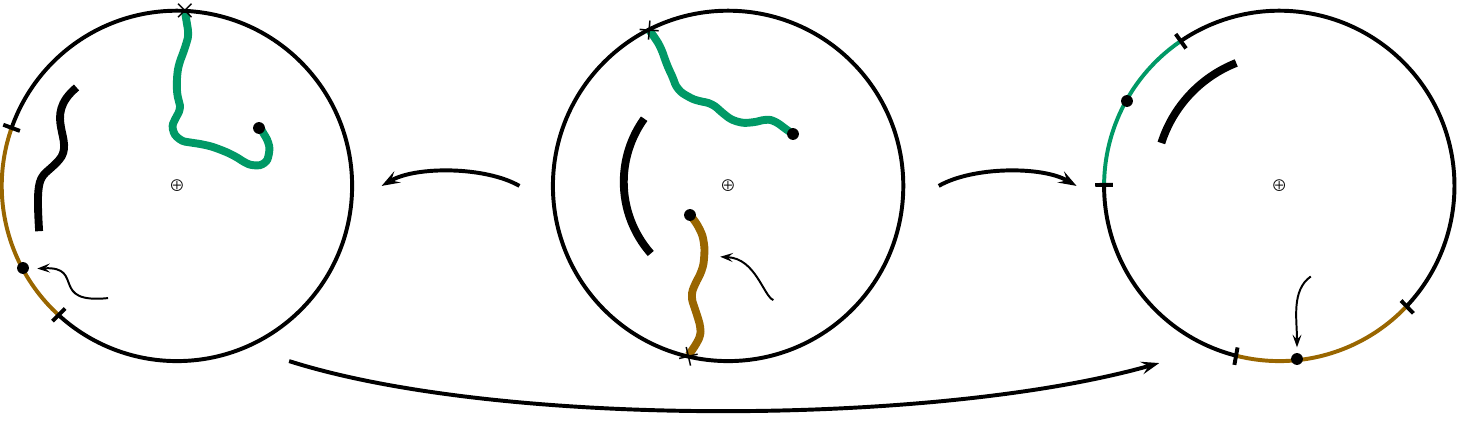}
		\put(8,8){$\zeta_k(t)$}
		\put(29,19){$h_{k;t}$}
		\put(49,24){$\Omega_t$}
		\put(54.5,21){$t$}
		\put(46.5,15.5){$t$}
		\put(53,8.5){$k$}
		\put(68,19){$g_t$}
		\put(90,20){$D_t$}
		\put(90.5,11){$\xi_k(t)$}
\end{overpic}
\end{center}
Note that $\alpha_k(t)\le 1$ holds for all $t\in[0,T]$ if $\Omega$ is simply connected, 
i.e. if $\Omega=\D$, see Lemma \ref{Lem-LambdaSimplyConnectedCase}.
Then we find the following theorem.
\begin{theorem} \label{The-KLEvsLE}
	Let $t_0\in[0,T]$ with $\gamma_1(t_0)\not=\gamma_2(t_0).$\\ Let $z_0\in\Omega_{t_0}\setminus\{0\}$, 
	then the following two conditions are equivalent.
	\begin{enumerate}
		\item Each function $t\mapsto h_{k;t}(z_0)$ is differentiable at $t_0$ for every 
			$k=1,\ldots,m$.
		\item The function $t\mapsto g_t(z)$ is differentiable at $t_0$ for every 
			$z\in \Omega_{t_0}$. 
	\end{enumerate}
	If $t\mapsto g_t(z)$ is differentiable at $t_0$ for every $z\in \Omega_{t_0}$, then 
	\begin{align} \label{Equ:KLE} \tag{$\star$}
		\dot{g}_{t_0}(z) = g_{t_0}(z) \sum_{k=1}^m \lambda_k(t_0)\cdot 
		\Phi(\xi_k(t_0),g_{t_0}(z);D_{t_0}),		
	\end{align}
	where $\lambda_1(t_0), \lambda_2(t_0)$ are uniquely determined non-negative numbers.\\
	If $t\mapsto h_{k;t}(z_0)$ is differentiable at $t_0$, then $t\mapsto h_{k;t}(z)$ is 
	differentiable at $t_0$ for every $z\in \Delta_k(t_0)$ and fulfills the following equation 
	\begin{align} \label{Equ:LE} \tag{$\star\star$}
		\dot{h}_{k;t_0}(z) = h_{k;t_0}(z)\cdot \mu_k(t_0)\cdot
		\frac{\zeta_k(t_0)+h_{k;t_0}(z)}{\zeta_k(t_0)-h_{k;t_0}(z)},
	\end{align}
	where $\mu_k(t_0)= \frac{d}{dt} \lmr(h_{k;t})|_{t=t_0}\ge0$. \\
	Moreover each function $t\mapsto \alpha_k(t)$ is continuous in $[0,T]$ for all $k=1,\ldots,m$
	and it holds $\alpha_k(t)>0$ and $\lambda_k(t_0) = \alpha_k^2(t_0) \cdot \mu_k(t_0)$.
\end{theorem}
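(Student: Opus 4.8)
The plan is to reduce both conditions to the single–slit Loewner theory and then to compare the two pictures locally at each tip, the bridge being the numbers $\alpha_k(t_0)$. First I would dispose of condition (1). Applying the disk case of Theorem \ref{Loewner_eq} to each $\gamma_k$ in $\D$, differentiability of $t\mapsto h_{k;t}(z_0)$ at a single point $z_0\neq0$ already forces differentiability of $t\mapsto\lmr(h_{k;t})$, and hence of the whole family $\{h_{k;t}\}$, yielding \eqref{Equ:LE} with $\mu_k(t_0)=\frac{d}{dt}\lmr(h_{k;t})|_{t_0}\ge0$ (non-negativity because $\lmr(h_{k;t})$ is non-decreasing in $t$). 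The only extra point is the upgrade from one point to all points and to the mapping radius; I would obtain it from a normal–families argument in the spirit of \cite{BoehmLauf}: the difference quotients $s^{-1}(h_{k;t_0+s}\circ h_{k;t_0}^{-1}-\mathrm{id})$ form a normal family whose associated Herglotz measure is a point mass at the continuous driving point $\zeta_k$ (Remark \ref{rm:continuous}), so convergence at the single image point $h_{k;t_0}(z_0)$ pins down the mass $\mu_k(t_0)$ and therefore the whole limit. Thus condition (1) is equivalent to: each $\mu_k(t_0)$ exists.

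Next I would analyse the family $\{g_t\}$ and show that its generator is supported exactly at the two tip images. Writing $F_s:=g_{t_0+s}\circ g_{t_0}^{-1}$, the radial Komatu--Loewner--Kufarev representation on the circular slit disk $D_{t_0}$ associates to $F_s$ a non-negative measure $\nu_s$ on $\partial\D$ of total mass $\lmr(g_{t_0+s})-\lmr(g_{t_0})$, through the kernel $\Phi(\cdot,w;D_{t_0})$, whose support is contained in the two boundary arcs onto which $g_{t_0}$ unfolds the removed increments $g_{t_0}(\gamma_k(t_0,t_0+s])$. Since $\gamma_1(t_0)\neq\gamma_2(t_0)$, these two arcs lie in disjoint neighbourhoods of the distinct points $\xi_1(t_0),\xi_2(t_0)$ and shrink to them as $s\to0$; hence any weak limit of $s^{-1}\nu_s$ is carried by $\{\xi_1(t_0),\xi_2(t_0)\}$. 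Consequently differentiability of $g$ at $t_0$ is equivalent to the existence of the two limits $\lambda_k(t_0):=\lim_{s\to0}s^{-1}\nu_s(\text{nbhd of }\xi_k)$, and on that event \eqref{Equ:KLE} holds with these uniquely determined $\lambda_k(t_0)\ge0$. This is precisely the step where disjointness is indispensable: it is what splits the generator into two separate point masses.

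The heart of the argument is then the tip-by-tip comparison $\lambda_k=\alpha_k^2\mu_k$, together with the continuity and positivity of $\alpha_k$. Near $\zeta_k(t_0)$ the map $\phi_{k;t_0}=g_{t_0}\circ h_{k;t_0}^{-1}$ extends conformally across $\partial\D$ by Schwarz reflection: both $h_{k;t_0}$ and $g_{t_0}$ unfold the slit tip with a square–root singularity of the same order, so the composition is biholomorphic at $\zeta_k(t_0)$, whence $\alpha_k(t_0)=|\phi_{k;t_0}'(\zeta_k(t_0))|>0$; continuity of $t\mapsto\alpha_k(t)$ follows from the locally uniform continuity of $t\mapsto g_t$ and $t\mapsto h_{k;t}$ and of $\xi_k,\zeta_k$ (Remark \ref{rm:continuous}). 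Because $g_{t_0}(\gamma_k(t_0,t_0+s])=\phi_{k;t_0}(h_{k;t_0}(\gamma_k(t_0,t_0+s]))$, the arc removed in the $D_t$-picture is the image, under the locally conformal $\phi_{k;t_0}$, of the arc removed in the $\D$-picture. I would then invoke the conformal scaling of boundary capacity: a small boundary hull transported by a conformal map of boundary derivative $\phi'$ has its half-plane capacity multiplied by $|\phi'|^2$. Transferring both the $\lmr(h_{k;t})$-increment (which controls $\mu_k$) and the mass $\nu_s$ near $\xi_k$ (which controls $\lambda_k$) to half-plane capacities through local charts at $\zeta_k(t_0)$ and $\xi_k(t_0)$, the chart normalizations cancel and leave exactly the factor $\alpha_k(t_0)^2$, giving the finite-$s$ comparison
$$\nu_s(\text{nbhd of }\xi_k)=\alpha_k(t_0)^2\,\big(\lmr(h_{k;t_0+s})-\lmr(h_{k;t_0})\big)+o(s),$$
and hence $\lambda_k(t_0)=\alpha_k(t_0)^2\mu_k(t_0)$ in the limit. (The bound $\alpha_k\le1$ from Lemma \ref{Lem-LambdaSimplyConnectedCase} in the case $\Omega=\D$ is consistent with, but not needed for, this identity.)

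Finally I would assemble the equivalence. Since $\alpha_k(t_0)>0$, the displayed comparison shows that for each $k$ the limit $\mu_k(t_0)$ exists if and only if $\lambda_k(t_0)$ exists. Combining this with the first two steps, (1) holds $\iff$ all $\mu_k$ exist $\iff$ all $\lambda_k$ exist $\iff$ (2) holds, and on this common event \eqref{Equ:KLE}, \eqref{Equ:LE} and $\lambda_k=\alpha_k^2\mu_k$ are exactly the identities derived above. I expect the main obstacle to be the crux of the third step: making the two-point concentration and the $|\phi'|^2$ capacity scaling simultaneously rigorous, i.e. controlling the transported small hull uniformly in $s$ and verifying that all normalization factors coming from the local charts and from passing between the kernel $\Phi(\cdot,w;D_{t_0})$ and the disk kernel $\frac{\zeta_k+w}{\zeta_k-w}$ cancel to produce precisely $\alpha_k^2$. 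A secondary technical point is the uniform-in-$t$ validity of the Schwarz reflection near the moving tips, which yields the continuity of $\alpha_k$ and which I would settle using the continuous dependence of the maps and of the driving functions on $t$.
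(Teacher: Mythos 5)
Your proposal is correct in outline and shares the paper's skeleton: reduce condition (1) to existence of the derivatives $\mu_k(t_0)$ of $\lmr(h_{k;t})$ (the paper's Lemma \ref{Lem-KLE-Lambda2}), reduce condition (2) to existence of the tip-wise limits $\lambda_k(t_0)$ (Lemma \ref{Lem-KLE-Lambda1}), and bridge the two by the local comparison $\lambda_k=\alpha_k^2\mu_k$ with $\alpha_k$ positive and continuous (Lemmas \ref{Lem-ContinuityAlpha} and \ref{Lem-ConnectionMuLambda}). Where you genuinely diverge is in how the two hardest sub-steps are executed. For the implication ``differentiability of $g$ $\Rightarrow$ each $\lambda_k$ exists separately,'' the paper chooses $m$ test points $z_1,\dots,z_m$ close to the tips so that the matrix $\bigl[\Re\,\Phi(\xi_k(t_0),g_{t_0}(z_j);t_0)\bigr]$ is diagonally dominant, hence invertible, and solves a linear system for the individual masses; you instead argue by weak-$*$ compactness of the rescaled generator measures, concentration of their supports at the two distinct points $\xi_1(t_0)\neq\xi_2(t_0)$, and (implicitly) uniqueness of the limit --- note that your ``consequently'' hides exactly the separation step, which you should make explicit via linear independence of $\Phi(\xi_1(t_0),\cdot\,;t_0)$ and $\Phi(\xi_2(t_0),\cdot\,;t_0)$ (distinct poles), and which also needs the a priori bound on the total mass coming from positivity of $\Re\,\Phi$ plus convergence at one point --- the same positivity the paper uses. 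For the identity $\lambda_k=\alpha_k^2\mu_k$, the paper stays entirely inside the radial picture: one factor $\alpha_k$ comes from a change of variables plus the mean value theorem applied to $|H_{t,t_0}'|$, the other from the bespoke distortion estimate of Lemma \ref{Lem-Inequality} ($|z|^{c+\delta}\le|T(z)|\le|z|^{c-\delta}$ near the reflection point); you instead invoke the standard $|\phi'|^2$-scaling of half-plane capacity under local conformal maps and transfer $\lmr$-increments to $\hcap$ through logarithmic charts. That route is viable but, to be rigorous, requires precisely a statement like the paper's appendix Lemma \ref{chordal_radial_diff} relating $\lmr$ and $\hcap$ increments (with the factor $2$ cancelling in the ratio), uniformly as the hulls shrink; this is the obstacle you yourself flag as the crux. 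In short: your decomposition matches the paper's, your step-2 argument is an abstract (and arguably cleaner) substitute for the paper's matrix trick, and your step-3 argument trades the paper's self-contained integral estimates for citations to chordal capacity scaling at the cost of an extra radial-to-chordal transfer lemma.
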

\begin{remark}
The value $\lambda_k(t_0)$ can be given explicitly:\\
Let $t,\tau\in[0,T],$ set
\[
	\Omega_k(t,\tau):=\Omega\setminus \Big(\gamma_k[0,t] \cup \bigcup_{j=1\atop j\ne k}^m \gamma_j[0,\tau] 
		\Big)
\]		
and denote by $f_{k;t,\tau}$ the unique normalized conformal mapping from $\Omega_k(t,\tau)$ 
onto a circular slit disk. Then
\[
	\lambda_k(t_0)= \lim_{t\rightarrow t_0} \frac{\lmr(f_{k;t,t_0})-\lmr(f_{k;t_0,t_0})}{t-t_0},
	\]
	 see Lemma \ref{Lem-KLE-Lambda1}.
\end{remark}

\begin{remark}
Note that Theorem \ref{The-KLEvsLE} implies 
\begin{enumerate}
 \item[$\bullet$] Theorem \ref{The-KLEvsLE_Intro}: consider the case $\Omega=\D$,
 \item[$\bullet$] Theorem \ref{different_para}: let $m=1$,
 \item[$\bullet$] Theorem \ref{Simply_multiply} (disjoint case): apply  Theorem \ref{The-KLEvsLE} twice; first you pass from
 the multiply connected case with two slits to 
equation \eqref{Equ:LE}, then you pass to the simply connected case with two slits.
\end{enumerate}
Thus, what remains to show are Theorem \ref{counterexample}, Theorem \ref{lines}, Theorem \ref{Simply_multiply} for the branch point case 
 and Theorem \ref{The-KLEvsLE}.
\end{remark}
The rest of this paper is organized as follows: The proof of Theorem \ref{The-KLEvsLE} is given in Section 
\ref{sec_3} and in Section \ref{Cha-LocalBehaviour} we prove Theorem \ref{counterexample} and Theorem \ref{lines}. 
The proof of Theorem \ref{Simply_multiply} for the branch point case is given in the appendix.\\
We start with Section \ref{sec_2}, where we give three applications of Theorem \ref {The-KLEvsLE}.

	\section{Applications and examples}

\label{sec_2}
Theorem \ref{The-KLEvsLE} can be used to prove several results concerning
the Loewner equation for multiple slits. 
In this chapter we use the same notation as in Section \ref{organization} and we let $m=2$. \\

If we have no further information about the parametrizations $\gamma_k$ of the slits $\Gamma_k$ ($k=1,2$),
it is still possible to show that equation \eqref{Equ:KLE} holds for almost all $t\in[0,T].$\\
First, as the functions $t\mapsto \lmr(h_{k;t})$ are strictly increasing, the derivatives $\mu_1(t), \mu_2(t)$ exist almost everywhere. Thus we immediately get from 
Theorem \ref{Loewner_eq} that the functions $t\mapsto h_{k;t}(z)$ are differentiable almost 
everywhere for all $z\in \Delta_k(T)$ and all $k=1,2$. Together with 
Theorem \ref{The-KLEvsLE} we find the following Corollary, which has been already proved 
in \cite{BoehmLauf} by using different tools.
\begin{corollary}[Corollary 5 in \cite{BoehmLauf}]
	There exists a null-set $\mathcal{N}$ with respect to the Lebesgue measure such that the 
	functions $t\mapsto g_t(z)$ are differentiable on $[0,T]\setminus \mathcal{N}$ 
	for all $z\in \Omega_T$ and it holds
	\[
		\dot g_{t}{(z)} = g_{t}(z) \sum_{k=1}^m \lambda_k(t) \cdot \Phi(\xi_k(t),
		g_{t}(z);t)
	\]
	for all $t\in[0,T]\setminus\mathcal{N}$ and each $z\in\Omega_{t}$.
	Furthermore, the functions $\lambda_k(t_0)$ fulfill the condition 
	$\sum_{k=1}^m \lambda_k(t_0) = 1$ if the condition $g_{t}'(0)=c\,e^t$ holds in 
	a neighborhood of $t_0$ with some constant $c>0$.
\end{corollary}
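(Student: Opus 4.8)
The plan is to reduce the almost-everywhere statement to the single-slit situation and then apply Theorem \ref{The-KLEvsLE} pointwise. First I would recall that each function $c_k(t):=\lmr(h_{k;t})$ is strictly increasing, hence, by Lebesgue's theorem on the differentiability of monotone functions, differentiable outside a Lebesgue null set $\mathcal{N}_k\subset[0,T]$. Setting
\[
	\mathcal{N}:=\mathcal{N}_1\cup\mathcal{N}_2\cup\{0\},
\]
I would obtain a null set, independent of $z$, such that for every $t_0\in[0,T]\setminus\mathcal{N}$ both $c_1$ and $c_2$ are differentiable at $t_0$ and $t_0>0$. The role of adjoining $\{0\}$ is that the branch point case $\gamma_1(t_0)=\gamma_2(t_0)$ can occur only for $t_0=0$; for every remaining $t_0\in[0,T]\setminus\mathcal{N}$ the slits satisfy $\gamma_1(t_0)\ne\gamma_2(t_0)$, so that Theorem \ref{The-KLEvsLE} is applicable there.

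Next, fixing $t_0\in[0,T]\setminus\mathcal{N}$, the differentiability of $c_k$ at $t_0$ together with Loewner's Theorem \ref{Loewner_eq}, applied to the single slit $\Gamma_k$ in $\D$, shows that $t\mapsto h_{k;t}(z)$ is differentiable at $t_0$ for every $z\in\Delta_k(t_0)$ and $k=1,2$. Choosing any $z_0\in\Omega_{t_0}\setminus\{0\}\subset\Delta_1(t_0)\cap\Delta_2(t_0)$, condition (1) of Theorem \ref{The-KLEvsLE} holds at $t_0$, hence so does condition (2): $t\mapsto g_t(z)$ is differentiable at $t_0$ for every $z\in\Omega_{t_0}$, and equation \eqref{Equ:KLE} provides uniquely determined numbers $\lambda_k(t_0)\ge0$ with the asserted differential equation. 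Since $\Omega_T\subseteq\Omega_{t_0}$, this covers every $z\in\Omega_T$ and establishes the first part of the Corollary.

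It then remains to evaluate $\sum_k\lambda_k(t_0)$ under the hypothesis $g_t'(0)=c\,e^t$. The key would be the general identity $\sum_{k=1}^m\lambda_k(t_0)=\dot c(t_0)$ with $c(t):=\lmr(g_t)$, which I would derive by expanding \eqref{Equ:KLE} at the fixed point $z=0$. As $\dot g_{t_0}$ is a locally uniform limit of difference quotients of the holomorphic maps $g_t$, it is itself holomorphic near $0$, so I may compare Taylor coefficients. Using $g_{t_0}(z)=g_{t_0}'(0)\,z+\mathcal{O}(z^2)$ and the normalization $\Phi(\xi_k(t_0),0;t_0)=1$, the coefficient of $z$ gives
\[
	\tfrac{d}{dt}g_t'(0)\big|_{t=t_0}=g_{t_0}'(0)\sum_{k=1}^m\lambda_k(t_0),
\]
so $\sum_k\lambda_k(t_0)=\tfrac{d}{dt}\log g_t'(0)\big|_{t=t_0}=\dot c(t_0)$. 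If $g_t'(0)=c\,e^t$ holds near $t_0$, then $c(t)=\log c+t$, whence $\dot c(t_0)=1$ and $\sum_k\lambda_k(t_0)=1$.

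I expect the only genuine subtlety to be organizational: ensuring that $\mathcal{N}$ is independent of $z$ --- which is exactly why the argument is routed through the monotone functions $c_k$, whose non-differentiability sets do not involve $z$ --- and absorbing the single exceptional time $t_0=0$ of the branch point case into $\mathcal{N}$. The Taylor-coefficient comparison at $z=0$ is then routine, once holomorphicity of $\dot g_{t_0}$ is noted.
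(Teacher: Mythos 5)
Your proposal is correct and follows essentially the same route as the paper: the paper likewise obtains the null set from Lebesgue differentiation of the strictly increasing functions $t\mapsto\lmr(h_{k;t})$, upgrades this to differentiability of $t\mapsto h_{k;t}(z)$ via Theorem \ref{Loewner_eq}, and then invokes Theorem \ref{The-KLEvsLE}, while the identity $\sum_k\lambda_k(t_0)=\dot c(t_0)$ is obtained there too by differentiating \eqref{Equ:KLE} with respect to $z$ at $z=0$ (see the proof of Corollary \ref{Cor-Disjoint1}). Your explicit inclusion of $\{0\}$ in $\mathcal{N}$ to rule out the branch point case is a careful touch that the paper leaves implicit, and your justification of the Taylor-coefficient comparison is at the same (informal) level of rigor as the paper's own remark.
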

Note that this is true for arbitrary parametrizations of the slits $\gamma_k$, i.e. we do not 
assume any normalization like $g_t'(0)=e^t$.\\

Next we will demonstrate how Theorem \ref{The-KLEvsLE} can be used to find new parametrizations 
for $\Gamma_1, \Gamma_2$, in order to get ``nice'' (Komatu-)Loewner equations,
i.e. equations with differentiability everywhere (and not only almost everywhere).\\
First, we let $L:=\lmr(g_T)$ and $L_k:=\lmr(h_{k;T}).$ Note that $L_k<L$ 
by the monotonicity of $\lmr$.
\begin{corollary} \label{Cor-Disjoint1}
    Assume $\gamma_1(0)\not=\gamma_2(0).$ Then there exist parametrizations $\tilde \gamma_1, \tilde \gamma_2\colon[0,L]\to\overline{\D}$ of the slits $\Gamma_1$ and $\Gamma_2$ such that the following
    holds: Denote by $\tilde g_s$ the unique normalized conformal mapping from
    $\tilde\Omega_s:=\Omega\setminus(\tilde \gamma_1[0,s]\cup \tilde \gamma_2[0,s])$
    onto a circular slit disk $\tilde D_s$ and let
    $\tilde \xi_k(s):=\tilde g_s(\tilde \gamma_k(s))$.\\
    Then the function  $s\mapsto \tilde g_s$ is continuously differentiable in $[0,L]$ with
    \begin{align} \label{Equ-KLE-everywhere}
        \dot{\tilde g}_s(z) = \tilde g_{s}(z) \sum_{k=1}^2
            \tilde \lambda_k(s) \cdot \Phi(\tilde \xi_k(s),\tilde g_s(z),\tilde D_s),\qquad
        \text{for all }s\in[0,L].
    \end{align}
    with continuous functions $\tilde \xi_k(s)$, $\tilde \lambda_k(s)\ge 0$ and
    $\tilde \lambda_1(s) +\tilde \lambda_2(s) = 1$ for all $s\in[0,L]$.
\end{corollary}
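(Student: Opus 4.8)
The plan is to reparametrize both slits simultaneously by the \emph{joint} logarithmic mapping radius, so that $\lmr(\tilde g_s)=s$ holds identically; the normalization $\tilde\lambda_1(s)+\tilde\lambda_2(s)=1$ will then be automatic. The argument splits into two parts: first I would produce an auxiliary joint parametrization for which $t\mapsto g_t$ is continuously differentiable and $c(t):=\lmr(g_t)$ is a $C^1$-diffeomorphism onto $[0,L]$, and only then perform the time change $s=c(t)$.

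First I would fix a common parameter $t\in[0,1]$ and choose parametrizations $\gamma_1,\gamma_2$ of $\Gamma_1,\Gamma_2$ that are linear in their individual mapping radii, i.e. $\lmr(h_{k;t})=tL_k$ for $t\in[0,1]$ and $k=1,2$. Such parametrizations exist by Loewner's Theorem \ref{Loewner_eq}, and they are $\D$-Loewner parametrizations at every $t_0\in[0,1]$ since $t\mapsto\lmr(h_{k;t})$ is differentiable; moreover $\mu_k(t)\equiv L_k>0$. As $\gamma_1(0)\ne\gamma_2(0)$ we are in the disjoint case, so Theorem \ref{The-KLEvsLE} (equivalently Theorem \ref{The-KLEvsLE_Intro} and Corollary \ref{reduce}) gives that $t\mapsto g_t(z)$ is differentiable for every $z$, with the representation \eqref{Equ:KLE}, and it yields $\lambda_k(t)=\alpha_k^2(t)\mu_k(t)$ with $\alpha_k$ continuous and strictly positive. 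Using the normalizations $g_t(0)=0$ and $\Phi(\xi,0;D)=1$, I would divide \eqref{Equ:KLE} by $z$ and let $z\to 0$ to obtain $\dot c(t)=\lambda_1(t)+\lambda_2(t)$. Hence $\dot c(t)=\alpha_1^2(t)L_1+\alpha_2^2(t)L_2$ is continuous and strictly positive, so $c\colon[0,1]\to[0,L]$ is a strictly increasing $C^1$-diffeomorphism (note $c(1)=\lmr(g_T)=L$), and the driving functions $\xi_k$ are continuous by Remark \ref{rm:continuous}.

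The remaining step is the time change. I would set $t=c^{-1}(s)$ and define $\tilde\gamma_k(s):=\gamma_k(c^{-1}(s))$, $\tilde g_s:=g_{c^{-1}(s)}$, so that $\lmr(\tilde g_s)=s$. Since $c^{-1}$ is $C^1$ with $(c^{-1})'(s)=1/\dot c(t)>0$, the chain rule applied to \eqref{Equ:KLE} produces \eqref{Equ-KLE-everywhere} with
\[
	\tilde\lambda_k(s):=\frac{\lambda_k(c^{-1}(s))}{\dot c(c^{-1}(s))}\ge 0,\qquad
	\tilde\xi_k(s):=\xi_k(c^{-1}(s)).
\]
These are continuous because $\lambda_k,\xi_k,\dot c$ are continuous and $\dot c>0$, and $\tilde\lambda_1(s)+\tilde\lambda_2(s)=(\lambda_1+\lambda_2)/\dot c=1$. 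Finally, the right-hand side of \eqref{Equ-KLE-everywhere} depends continuously on $s$, so $s\mapsto\tilde g_s$ is in fact continuously differentiable on $[0,L]$.

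The main obstacle is the construction in the second paragraph: one must guarantee that $c$ is not merely differentiable but a genuine $C^1$-diffeomorphism, which requires both the continuity of $\alpha_k$ and the strict positivity $\alpha_k>0$ from Theorem \ref{The-KLEvsLE}, combined with the deliberate choice $\mu_k\equiv L_k>0$; without $\dot c>0$ the time change $s=c(t)$ would fail to be invertible. Once $\dot c>0$ is secured, the identity $\dot c=\lambda_1+\lambda_2$ is precisely what forces the rescaled coefficients to sum to one, and everything else reduces to a routine application of the chain rule.
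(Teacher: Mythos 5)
Your proof is correct and follows essentially the same route as the paper's: parametrize each slit linearly in its own mapping radius so that $\mu_k$ is constant, invoke Theorem \ref{The-KLEvsLE} to get \eqref{Equ:KLE} with continuous weights $\lambda_k=\alpha_k^2\mu_k$, observe $\dot c=\lambda_1+\lambda_2$ by evaluating at $z=0$, and then reparametrize by $s=c(t)$. The only cosmetic difference is that after the time change you apply the chain rule directly to \eqref{Equ:KLE}, whereas the paper re-applies Theorem \ref{The-KLEvsLE} to the reparametrized slits; your explicit verification that $\dot c>0$ (via $\alpha_k>0$ and $L_k>0$), which makes $c$ a genuine $C^1$-diffeomorphism, is a point the paper leaves implicit.
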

\begin{proof}
    First of all we  assume that each slit $\Gamma_k$ is parameterized in such a
    way that $\lmr( h_{k;t})$ is continuously differentiable for all $t\in[0,T]$, e.g.
    $\lmr( h_{k;t})=\frac{L_k}{T}\cdot t.$ (If not, then we can reparametrize
    $\gamma_1$ and $\gamma_2$.)\\[-0.7\baselineskip]

   In the notation of Theorem \ref{The-KLEvsLE}, this means that  $\mu_k(t)=\frac{L_k}{T}$ for all $t\in [0,T].$ 

    Then, by Theorem \ref{Loewner_eq} and Remark \ref{rm:continuous}, the trajectories $t\mapsto h_{k;t}(z)$ are continuously differentiable
    and fulfill equation (\ref{Equ:LE}) for each $t\in[0,T]$.

    By Theorem \ref{The-KLEvsLE}, the trajectories $t\mapsto g_t(z)$ fulfill equation (\ref{Equ:KLE}) for
    all $t\in[0,T]$. The right side of equation (\ref{Equ:KLE}) depends continuously on $t:$ \\
    the driving functions are continuous because of Remark \ref{rm:continuous}, and Lemma 19 in \cite{BoehmLauf} implies the continuity of the 
    function $\Phi.$ The continuity of the weights $t\mapsto\lambda_k(t)$ is an
    immediate consequence of the relation $\lambda_k(t)=\alpha_k^2(t)\cdot\mu_k(t)$ (see Theorem \ref{The-KLEvsLE}) together
    with the continuity of $\alpha_k$ and $\mu_k$.\\
    Hence, $t\mapsto g_t(z)$ is continuously differentiable in $[0,T]$.\\
    
    Note that, in general, the weights $t\mapsto\lambda_k(t)$ don't sum up to 1.\\[-0.7\baselineskip]

    In order to get normalized weights, we consider the following increasing homeomorphism
    $u(t):=\lmr(g_t)=\lmr(\gamma_1[0,t]\cup\gamma_2[0,t])$ that maps $[0,T]$ onto $[0,L]$.
    It is continuously differentiable with
    $\dot u(t)=\lambda_1(t)+\lambda_2(t)$ for all $t\in[0,T]$.
    This follows easily by differentiating \eqref{Equ:KLE} w.r.t. $z$ at the point $z=0$.

    Now we set $\tilde\gamma_k(s):=\gamma_k(u^{-1}(s))$ for all $s\in[0,L]$.
    Since the function $s\mapsto \lmr(\tilde\gamma_k[0,s])$
    is the composition of two continuously differentiable functions it is continuously
    differentiable as well.
    Consequently, by using Theorem \ref{The-KLEvsLE} in the same way as before,
    the trajectories $s\mapsto\tilde g_s(z)$
    are continuously differentiable and fulfill the stated differential equation for
    all $s\in[0,L]$.

    Finally, as $\lmr(\tilde g_s)=s$ for all $s\in[0,L]$, we have
    $ \tilde\lambda_1(s)+ \tilde\lambda_2(s)=1$ for all $s\in[0,L]$.
\end{proof} 
\begin{remark}
 The proof of Corollary \ref{Cor-Disjoint1} shows that there exist ``many'' parametrizations
 $\tilde \gamma_1,$ $\tilde \gamma_2$ such that equation \eqref{Equ-KLE-everywhere} holds and tells us
 how to construct them. This is based on the fact that we are not restricted to claim $\lmr(h_{k;t})=\frac{L_k}{T}\cdot t$. Instead, we 
 can choose the initial parametrization in such a way that $\lmr(h_{k;t})=u_k(t)$ holds, 
 where $u_k\colon[0,T]\rightarrow [0,L_k]$ is an arbitrary
  continuously differentiable increasing homeomorphism.\\
In \cite{BoehmSchl} it was shown that one can even choose $\tilde \gamma_1$ and $\tilde \gamma_2$ such that
$\tilde \lambda_1(t)$ and $\tilde \lambda_2(t)$ are constant. Furthermore, this additional
condition makes $\tilde \gamma_1$ and $\tilde \gamma_2$ unique.
\end{remark}

The next application is a bit more technical, but quite useful, e.g. for constructing certain
counterexamples mentioned in the introduction.\\

Assume that $u_1\colon[0,L]\to[0, L_1]$ is a given increasing homeomorphism. It is easy to see that 
we can find an increasing homeomorphism $v_1\colon[0,L]\to [0,T],$
such that $\lmr(h_{1,v_1(s)})=u_1(s)$ for all $s\in[0,L].$ Now consider the following question:\\
Can we find an increasing homeomorphism $v_2\colon[0,L]\to [0,T]$ such that the function
$s\mapsto \tilde g_s$ from Corollary \ref{Cor-Disjoint1} satisfies equation \eqref{Equ-KLE-everywhere}
with $\tilde \lambda_1(s)+\tilde \lambda_2(s)=1$ for all $s\in[0,L]$?\\

The following statement gives a partial answer to this question for the simply connected case,
i.e. $\Omega = \D.$ The proof depends on an inequality
for the logarithmic mapping radius (see inequality (\ref{Equ-lmrInequality}))
that is only known to be true for the simply connected case. 
\begin{proposition}\label{Cor-Disjoint2}
	Assume $\gamma_1(0)\not=\gamma_2(0)$. Let $\Omega=\D$ and $u_1\colon[0,L]\rightarrow [0,L_1]$ be an increasing Lipschitz continuous 
	function with a Lipschitz constant $K< 1$.\\
	Let $v_1\colon[0,L]\to [0,T]$ be the increasing homeomorphism such that 
	$\lmr(h_{1,v_1(s)})=u_1(s)$ for all $s\in[0,L].$ 
	Then there is a unique increasing homeomorphism $v_2\colon[0,L]\to [0,T]$ such that
	the following holds:\\
	Denote by $\tilde g_s$ the unique normalized conformal mapping from
	$\Omega_s:=\D\setminus((\gamma_1\circ v_1)[0,s]\cup (\gamma_2\circ v_2)[0,s])$
	onto $\D$. Then $\lmr(\tilde g_s)=s$ for all $s\in[0,L]$.\\
	Moreover, if $s\mapsto u_1(s)$ is continuously differentiable in $[0,L]$, then the function 
	$s\mapsto \tilde g_s(z)$ is continuously differentiable and satisfies equation 
	\eqref{Equ-KLE-everywhere} for all $s\in[0,L]$ and all
	$z\in\D\setminus(\Gamma_1\cup\Gamma_2)$ with 
	$\tilde \lambda_1(s)+\tilde \lambda_2(s)= 1$ for all $s\in[0,L]$.
\end{proposition}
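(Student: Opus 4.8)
The plan is to encode both parametrizations through the two‑variable logarithmic mapping radius. For $a,b\in[0,T]$ let $G(a,b):=\lmr(g_{a,b})$, where $g_{a,b}$ is the normalized map from $\D\setminus(\gamma_1[0,a]\cup\gamma_2[0,b])$ onto $\D$; then $G$ is continuous, strictly increasing in each variable, with $G(0,0)=0$, $G(a,0)=\lmr(h_{1;a})$, $G(0,b)=\lmr(h_{2;b})$ and $G(T,T)=L$. The whole problem is to solve, for every $s\in[0,L]$, the scalar equation $G(v_1(s),v_2(s))=s$ for $v_2(s)$, where $v_1$ is already pinned down by $\lmr(h_{1;v_1(s)})=u_1(s)$. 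Before starting I would reparametrize both slits so that $\lmr(h_{1;t})=\frac{L_1}{T}t$ and $\lmr(h_{2;t})=\frac{L_2}{T}t$; this changes neither the recomposed first slit $\gamma_1\circ v_1$ nor the object to be constructed, since both depend only on the geometric slits and the prescribed mapping radii, but it makes the single‑slit radii $C^1$, which I will need at the end.

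The key analytic input is the simply‑connected bound $\alpha_k\le 1$ (Lemma \ref{Lem-LambdaSimplyConnectedCase}), equivalently inequality \eqref{Equ-lmrInequality}: growing one slit raises the joint radius by at most its own solo increase. In terms of $G$ this says $\partial_aG=\alpha_1^2\mu_1\le\mu_1$ and $\partial_bG\le\mu_2$, whence for $a'\ge a$ the subadditive estimate $G(a',b)-G(a,b)\le\lmr(h_{1;a'})-\lmr(h_{1;a})$ (and symmetrically in $b$). Uniqueness of $v_2(s)$ is then immediate from strict monotonicity of $b\mapsto G(v_1(s),b)$. For existence I observe that at $b=0$ the value is $u_1(s)\le Ks<s$ for $s>0$ (using $u_1(0)=0$ and $K<1$), while at $b=T$ the estimate gives $L-G(v_1(s),T)\le L_1-u_1(s)$, i.e. $G(v_1(s),T)\ge L-L_1+u_1(s)$. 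Since $K<1$, the map $s\mapsto s-u_1(s)$ has all difference quotients $\ge 1-K>0$, so it is strictly increasing and equals $L-L_1$ at $s=L$; thus $s-u_1(s)<L-L_1$ for $s<L$, which rearranges to $G(v_1(s),T)>s$. The intermediate value theorem then yields the (unique) root $v_2(s)\in[0,T]$, with $v_2(0)=0$ and $v_2(L)=T$.

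To see that $v_2$ is an increasing homeomorphism, continuity follows from the implicit description together with continuity and strict monotonicity of $G(v_1(s),\cdot)$, while strict monotonicity comes again from the inequality: for $s_1<s_2$, writing $s_2-s_1=[G(v_1(s_2),v_2(s_2))-G(v_1(s_1),v_2(s_2))]+[G(v_1(s_1),v_2(s_2))-G(v_1(s_1),v_2(s_1))]$ and bounding the first bracket by $u_1(s_2)-u_1(s_1)\le K(s_2-s_1)$ forces the second bracket to be $\ge(1-K)(s_2-s_1)>0$, hence $v_2(s_2)>v_2(s_1)$. This already proves the topological assertion $\lmr(\tilde g_s)=s$. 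For the $C^1$ part I would invoke the implicit function theorem: after the linear reparametrization $\mu_1,\mu_2$ are positive constants, so $\partial_aG=\alpha_1^2\mu_1$ and $\partial_bG=\alpha_2^2\mu_2$ are continuous and strictly positive (joint continuity of $\alpha_k$ as in Theorem \ref{The-KLEvsLE}); thus $G\in C^1$ with $\partial_bG>0$, and $G(v_1(s),v_2(s))=s$ with $v_1\in C^1$ gives $v_2\in C^1$. Then $\lmr(\hat h_{1;s})=u_1(s)$ and $\lmr(\hat h_{2;s})=\frac{L_2}{T}v_2(s)$ are both $C^1$, so each recomposed slit is $\D$‑Loewner at every $s$ by Theorem \ref{Loewner_eq}; Theorem \ref{The-KLEvsLE} then delivers differentiability of $s\mapsto\tilde g_s(z)$ and equation \eqref{Equ-KLE-everywhere}, and differentiating $\lmr(\tilde g_s)=s$ (via $\dot u=\lambda_1+\lambda_2$) gives $\tilde\lambda_1(s)+\tilde\lambda_2(s)=1$. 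Continuity of all data, hence continuous differentiability, follows exactly as in Corollary \ref{Cor-Disjoint1}.

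\textbf{Main obstacle.} I expect the crux to be the two existence/monotonicity estimates, which are precisely where both hypotheses are indispensable: the bound $\alpha_k\le 1$ is available only because $\Omega=\D$ (inequality \eqref{Equ-lmrInequality}), and the strict sign $1-K>0$ needed both to reach the value $s$ at $b=T$ and to keep $v_2$ strictly increasing is exactly the assumption that the Lipschitz constant is $K<1$. A secondary technical point, needed only for the implicit function theorem in the $C^1$ step, is the joint continuity of $\alpha_k$ in the two slit parameters, which I would establish by the same continuity arguments used for the one‑parameter $\alpha_k$ in Theorem \ref{The-KLEvsLE}.
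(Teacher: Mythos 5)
Your proposal is correct, and its first half is essentially the paper's own argument: existence and injectivity of $v_2$ both rest on the simply connected inequality \eqref{Equ-lmrInequality} combined with the Lipschitz bound $K<1$. Your quantitative monotonicity estimate (second bracket $\ge(1-K)(s_2-s_1)$) is just the direct form of the paper's proof by contradiction, and your endpoint check $G(v_1(s),T)\ge L-L_1+u_1(s)\ge s$ is in fact spelled out more carefully than in the paper, which treats existence of the root as immediate from $u_1(s)<s$. The second half, however, takes a genuinely different route. The paper never differentiates $v_2$ at all: it invokes Proposition 17 of \cite{BoehmLauf} to produce Lipschitz functions $c_1,c_2$ with $c_1(s)+c_2(s)=s$ whose derivatives, where they exist, are the weights $\tilde\lambda_k$; continuity of $\tilde\lambda_1=\alpha_1^2\,\dot u_1$ (Lemmas \ref{Lem-ContinuityAlpha} and \ref{Lem-ConnectionMuLambda}) then makes $c_1$, hence $c_2=s-c_1$, continuously differentiable, and $\tilde\lambda_2=1-\tilde\lambda_1$ comes for free. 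You instead build a two-variable theory: $G\in C^1$ with $\partial_a G=\alpha_1^2\mu_1$, $\partial_b G=\alpha_2^2\mu_2>0$, and the implicit function theorem gives $v_2\in C^1$ directly, after which Theorem \ref{The-KLEvsLE} (as in Corollary \ref{Cor-Disjoint1}) finishes. The price is exactly what you flag, and slightly more: you need not only joint continuity of $\alpha_k$ in the two slit parameters but also the identity $\partial_a G(a_0,b_0)=\alpha_1^2(a_0,b_0)\mu_1(a_0)$ with the second slit frozen at an arbitrary $b_0$, whereas Lemmas \ref{Lem-ContinuityAlpha} and \ref{Lem-ConnectionMuLambda} are stated with the other slit frozen at the same parameter value $t_0$. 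This extension is routine and can even be obtained from the paper's lemmas as stated: since they hold for arbitrary parametrizations, reparametrize $\gamma_2$ so that it passes through $\gamma_2(b_0)$ at time $a_0$ and apply the one-parameter statements to that configuration. In exchange, your route avoids citing Proposition 17 of \cite{BoehmLauf}, stays within the machinery of Section \ref{sec_3}, and yields the additional conclusion that $v_2$ and $G$ are themselves $C^1$, which the paper's argument never establishes explicitly.
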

The proof of this proposition is given in Section \ref{sec_3}.

\begin{example} \label{Exa-Counterexample2}
	Let $\Omega=\D$ and $\Gamma_1$, $\Gamma_2$ be disjoint slits with 
	$L=1.$ Then $L_1, L_2<1$.\\ 
	Consequently we find an $\epsilon >0$ so that $L_1+\epsilon < 1$ as well. Then we define
	\[
		u_1:[0,1] \rightarrow [0,L_1],\quad s\mapsto u_1(s):=
		\begin{cases}
			(L_1+\epsilon) s & \text{if }t\in[0,\frac{1}{2}],\\
			(L_1-\epsilon) s + \epsilon & \text{if }s\in(\frac{1}{2},1].
		\end{cases}
	\]
	By $v_1:[0,1]\rightarrow [0, L_1]$ we denote the homeomorphism such that
	$\lmr(h_{1;v_1(s)})=u_1(s)$. \\
	$u_1$ is Lipschitz continuous with Lipschitz constant $K=L_1+\epsilon<1$. By Proposition \ref{Cor-Disjoint2}
	we find a homeomorphism $v_2\colon[0,1]\rightarrow [0,L_2]$ so that $\lmr(\tilde g_s)=s$ for all $s\in[0,1]$.\\
	The function $s\mapsto h_{1;v_1(s)}$ is not differentiable at $s=\frac{1}{2}$
	by Theorem \ref{Loewner_eq} as $u_1(s)=\lmr(h_{1,v_1(s)})$ is not differentiable at $s=\frac{1}{2}$.\\
	Thus, by Theorem \ref{The-KLEvsLE}, the function $s\mapsto \tilde g_s$ is not differentiable at $s=\frac{1}{2}$. 
	However, the function $s\mapsto \lmr(\tilde g_s)=s$ is differentiable at $s=\frac{1}{2}$. \hfill $\bigstar$
\end{example}

Finally, we consider the slightly different setting of two slits with one common starting point.
The next example shows that the converse of Theorem \ref{lines} is not true.

\begin{example} \label{Exa-Counterexample3}
	Let $\gamma_1$, $\gamma_2\colon[0,T]\to\overline\D$ be parametrizations of two slits satisfying
	the conditions of Theorem \ref{lines}. Let $g_t$ be defined as in Theorem \ref{lines}. \\
	Furthermore, let $h_{k;t}$ be the unique normalized mapping from $\D\setminus \gamma_k[0,t]$
	onto $\D.$ \\
	Without restricting generality we may assume 
	$L:=\lmr(g_T)=1$. Moreover, let $L_k:=\lmr(h_{k;T})$. Then $L_k<1$ and
	we find analogously to Example \ref{Exa-Counterexample2} an
	$\epsilon>0$ so that $L_1+\epsilon<1$.
	
	Next, let $u\colon[0,1]\rightarrow[0,L_1]$ be defined by
		\[
		s\mapsto u(s)=
		\begin{cases}
			(L_1+\epsilon) s & \text{if }t\in[0,\frac{1}{2}],\\
			(L_1-\epsilon) s + \epsilon & \text{if }s\in(\frac{1}{2},1].
		\end{cases}
	\]
	We will use $u$ to construct another increasing homeomorphism
	$u_1\colon[0,1]\rightarrow [0,L_1]$ (see Figure \ref{es_regnet}):
	\[ 
		u_1(s):={\begin{cases}
				\frac{1}{2^n} u(2^ns-1) + \frac{L_1}{2^n} & \text{ if }s\in(\frac{1}{2^n},\frac{2}{2^n}] \text{ with } n\in\N,\\
				\mbox{}\hspace{1cm}0& \text{ if }s=0.
			\end{cases}}
	\]
	We have $|u_1(t_2)-u_1(t_1)|\le(L_1+\epsilon)(t_2-t_1)$ for all $0\le t_1\le t_2\le 1$,
	so $u_1$ is strictly increasing and Lipschitz continuous. Moreover we denote by $v_1\colon[0,1]\to [0,T]$ the unique
	homeomorphism having the property that $\lmr(h_{1;v_1(s)})=u_1(s)$ holds for all $s\in[0,1]$.
	Now we find a unique homeomorphism $v_2\colon[0,1]\to [0,T]$ such that 
	$\lmr(\tilde g_s)=s$ holds for all $s\in[0,1]$, where $\tilde g_s$ denotes the unique normalized
	mapping from $\D\setminus((\gamma_1\circ v_1)[0,s]\cup (\gamma_2\circ v_2)[0,s])$ onto $\D$. This is possible to do using the first part of the proof of Proposition \ref{Cor-Disjoint2}, as it is applicable to the branch point case as well. \\
	On the one hand, by Theorem \ref{Simply_multiply}, the function $s\mapsto \tilde g_s$ is differentiable at $s=0$.
	On the other hand, $s\mapsto h_{1;v_1(s)}$ is not differentiable at $s=0$ in accordance with Theorem \ref{Loewner_eq}, because by construction,
	 $\lmr(h_{1;v_1(s)})=u_1(s)$ and $u_1'(0)$ does not exist. \hfill $\bigstar$
\end{example}

\begin{figure}[h]\label{es_regnet}
\centering
	\begin{overpic}[scale=0.5, bb=75 213 542 585]
		{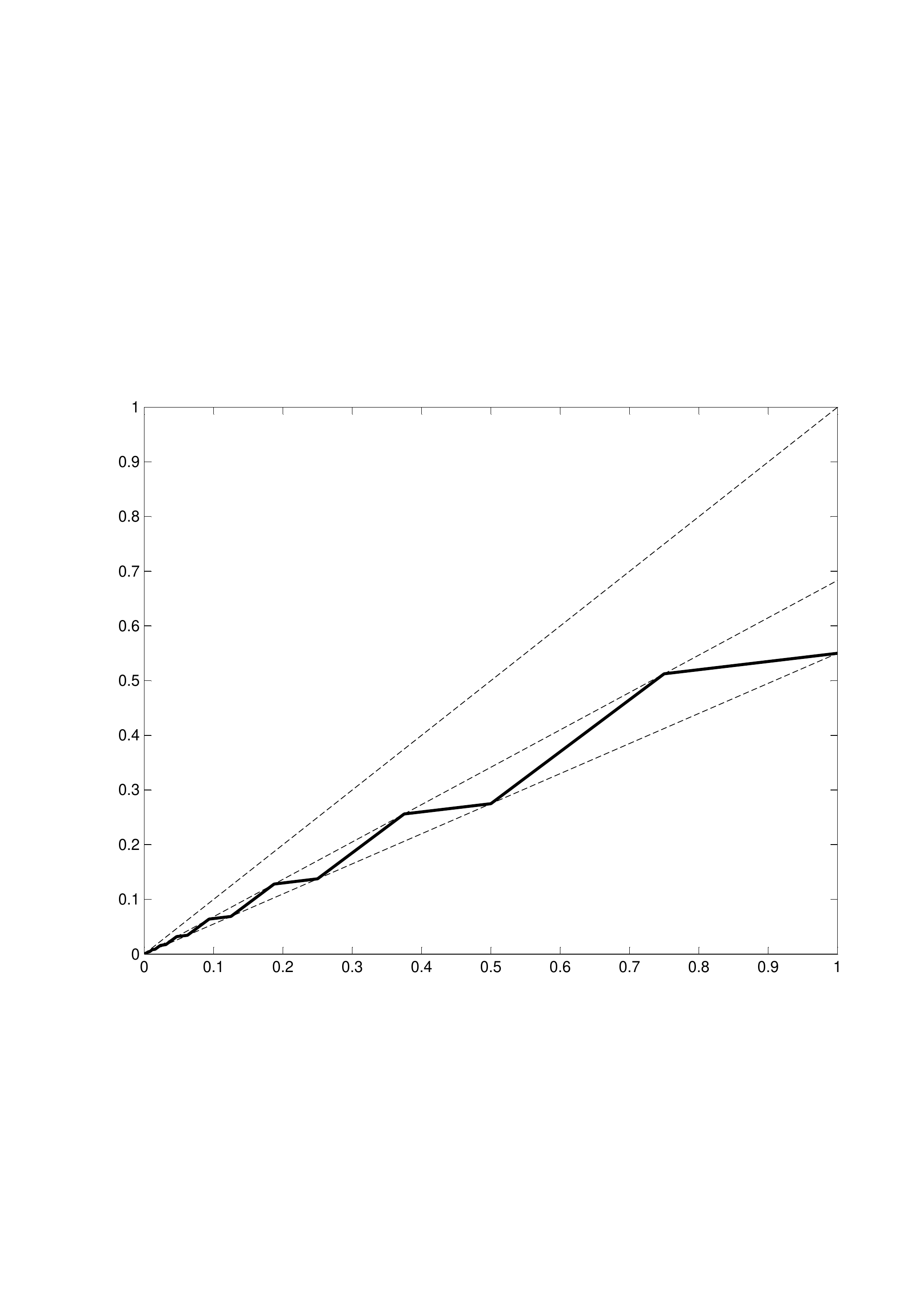}
	\end{overpic}
	\caption{The function $u_1$ from Example \ref{Exa-Counterexample3}.}
\end{figure}

	\section{Proof of Theorem \ref{The-KLEvsLE} and Proposition \ref{Cor-Disjoint2}} 
	\label{sec_3}
As we have mentioned in the introduction, all statements can be easily generalized to the case
$m>2$, so we will use a notation indicating this case as well.

First of all, for all $t,\tau\in[0,T],$ we set 
\[
	\Omega_k(t,\tau):=\Omega\setminus \Big(\gamma_k[0,t] \cup \bigcup_{j=1\atop j\ne k}^m \gamma_j[0,\tau] 
	 \Big)
\]		
and denote by $f_{k;t,\tau}$ the unique normalized  mapping $f_t: \Omega_k(t,\tau) \to D_k(t,\tau),$ 
where $D_k(t,\tau)$ is a circular slit disk. 
Consequently we have $g_t=f_{k;t,t}$ and $\Omega_t=\Omega_{k}(t,t)$ as well.\\[\lenpar]
Next, provided that the limits exist, we define
\[
	\lambda_k(t_0):= \lim_{t\rightarrow t_0} \frac{\lmr(f_{k;t,t_0})-\lmr(f_{k;t_0,t_0})}{t-t_0},
	\qquad
	\mu_k(t_0):= \lim_{t\rightarrow t_0} \frac{\lmr(h_{k;t})-\lmr(h_{k;t_0})}{t-t_0}.
\]
Finally we set $\xi_k(t,\tau):=f_{k;t,\tau}\big(\gamma_k(t)\big)$,
\[
	S_{k;\lt,\gt,\tau}:= f_{k;\lt,\tau}\big(\gamma_k[\lt,\gt] \big)
		\subset \D\cup \{\xi_k(t,\tau)\}, \quad
	s_{k;\lt,\gt,\tau}:=f_{k;\gt,\tau}\big(\gamma_k[\lt,\gt] \big)
		\subset \partial\D,
\]
and $\sigma_{k;\lt,\gt}:=h_{k;\gt}(\gamma_k[\lt,\gt])$ for all $0\le \lt\le\gt\le T$.
Next we are going to use results from \cite{BoehmLauf} in order to show that the existence
of the above limits $\lambda_k(t_0)$ is equivalent to differentiability
of the function $t\mapsto g_t(z)$.
\begin{lemma} \label{Lem-KLE-Lambda1}
	Let $t_0\in[0,T]$. 
	Then the following three conditions are equivalent:
	\begin{enumerate}
		\item Each limit $\lambda_k(t_0)$ exists ($k=1,\ldots,m$).
		\item The function $t\mapsto g_t(z)$ is differentiable at $t_0$ for every 
			$z\in\Omega_{t_0}$.
		\item The function $t\mapsto g_t(z)$ is differentiable at $t_0$ for every 
			$z\in\Omega_{t_0}$ and fulfills equation (\ref{Equ:KLE}) 
			for all $z\in\Omega_{t_0}$.
	\end{enumerate}	
\end{lemma}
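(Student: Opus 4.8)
The plan is to establish the three-way equivalence by proving a cycle of implications, with the main work concentrated in showing that the existence of the limits $\lambda_k(t_0)$ forces differentiability of $t\mapsto g_t(z)$ together with the explicit formula \eqref{Equ:KLE}. The implication $(3)\Rightarrow(2)$ is trivial, and $(2)\Rightarrow(1)$ should follow by differentiating the relation $\lmr(g_t)=\lmr(f_{k;t,t})$ appropriately; more precisely, I would argue that differentiability of $t\mapsto g_t(z)$ at $t_0$ for every $z$, specialized near $z=0$, yields differentiability of $t\mapsto\lmr(g_t)$, and then relate the one-sided difference quotients of $\lmr(f_{k;t,t_0})$ to those of $\lmr(g_t)$ using the fact that the other slits are held frozen at time $t_0$. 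The heart of the matter is therefore $(1)\Rightarrow(3)$.

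For $(1)\Rightarrow(3)$, the natural strategy is to decompose the motion of the full map $g_t$ into the separate contributions of the individual slits and to reduce each contribution to the single-slit situation governed by Theorem \ref{Loewner_eq}. Concretely, I would write $g_t = f_{k;t,t}$ and compare it with $f_{k;t_0,t_0}=g_{t_0}$ by factoring the change through the intermediate map $f_{k;t,t_0}$, in which only the $k$-th slit has advanced from $t_0$ to $t$ while the others remain at $t_0$. The map $f_{k;t,t_0}\circ g_{t_0}^{-1}$ is then a conformal self-perturbation of a single circular slit disk driven by a single growing slit, so the hypothesis that $\lambda_k(t_0)=\frac{d}{dt}\lmr(f_{k;t,t_0})|_{t_0}$ exists should, via the Komatu–Loewner theorem (Theorem \ref{LoewnerKomatu_eq}) applied in the domain $D_{t_0}$, give differentiability of that single-slit evolution together with the differential equation
\[
\frac{d}{dt}\big(f_{k;t,t_0}\circ g_{t_0}^{-1}\big)(w)\Big|_{t=t_0}
= \lambda_k(t_0)\cdot w\cdot \Phi(\xi_k(t_0),w;D_{t_0}).
\]
Summing the $m$ contributions and using the chain rule at $t=t_0$ then assembles the full equation \eqref{Equ:KLE}, with the non-negativity of the $\lambda_k(t_0)$ coming from the monotonicity of $\lmr$.

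The main obstacle I anticipate is justifying that the separate one-slit contributions genuinely add up, i.e. that advancing all slits simultaneously produces, to first order at $t_0$, the sum of the effects of advancing each slit individually while freezing the rest. This requires controlling the interaction between the slits, and the key point is that for $t_0$ with $\gamma_1(t_0)\ne\gamma_2(t_0)$ the driving points $\xi_k(t_0)$ are distinct and the tips $\gamma_k(t)$ stay bounded away from one another and from the other slits for $t$ near $t_0$; this separation is exactly what lets the estimates from \cite{BoehmLauf} localize the influence of each slit near its own driving point. I would therefore invoke the quantitative Loewner-type estimates of \cite{BoehmLauf} to bound the higher-order cross terms, ensuring that the difference quotient of $g_t$ converges to the claimed sum and that the error terms arising from simultaneous (rather than sequential) growth vanish in the limit. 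Continuity of the driving functions $\xi_k$ (Remark \ref{rm:continuous}) and of $\Phi$ is used throughout to pass to the limit inside the expressions, and the uniqueness of the representation \eqref{Equ:KLE}—which pins down the $\lambda_k(t_0)$—follows since the functions $w\mapsto \Phi(\xi_k(t_0),w;D_{t_0})$ have distinct singularities at the distinct points $\xi_k(t_0)$ and are thus linearly independent.
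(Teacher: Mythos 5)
Your proposal inverts where the real difficulty of this lemma lies, and the step you treat as routine is precisely the one that fails as sketched. The implication (1)$\Rightarrow$(3), on which you spend most of your effort, is in the paper a direct citation of Theorem 2 of \cite{BoehmLauf}; your plan of factoring through $f_{k;t,t_0}$ and bounding cross terms amounts to re-deriving that cited result, which is legitimate but not where the lemma's new content sits. The substantive implication is (2)$\Rightarrow$(1), and your sketch of it has a genuine gap: from differentiability of $t\mapsto g_t(z)$ you propose to extract only the scalar fact that $t\mapsto\lmr(g_t)$ is differentiable. Via the representation from Lemma 10 of \cite{BoehmLauf} (evaluated near $z=0$, where $\Phi\equiv1$), this says exactly that the \emph{sum} $\sum_{k=1}^m c_k(t,t_0)/(t-t_0)$ converges, where $c_k(t,t_0)=\int_{s_{k;t_0,t,t}}-\ln|(g_{t_0}\circ g_t^{-1})(\xi)|\,|\intd\xi|$. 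But the existence of $\lambda_k(t_0)$ is equivalent (by the proof of Theorem 2 of \cite{BoehmLauf}) to the existence of each \emph{individual} limit $\lim_{t\to t_0}c_k(t,t_0)/(t-t_0)$, and convergence of a sum of $m$ nonnegative quantities says nothing about convergence of its summands: two of them can oscillate in compensating fashion. With $m$ unknowns and the single equation coming from $\lmr$, there is no way to ``relate the one-sided difference quotients of $\lmr(f_{k;t,t_0})$ to those of $\lmr(g_t)$'' as you assert.

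The paper closes exactly this gap by exploiting the full $z$-dependence of hypothesis (2), not just its trace at $z=0$: it writes $\ln|g_t(z)/g_{t_0}(z)|=\frac{1}{2\pi}\sum_{k}\Re\big(\Phi(\xi_k(t_0),g_{t_0}(z);t_0)\big)\,c_k(t,t_0)+o(|t-t_0|)$ and then chooses $m$ test points $z_1,\dots,z_m$, each close to the tip $\gamma_k(t_0)$ along the preimage of the ray $\{1+\mathrm{i}x : x\ge0\}$ under $z\mapsto\Phi(\xi_k(t_0),g_{t_0}(z);t_0)$, so that the coefficient matrix $\big[\Re\,\Phi(\xi_k(t_0),g_{t_0}(z_j);t_0)\big]_{j,k}$ is diagonally dominant, hence invertible. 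Inverting this linear system expresses each $c_k(t,t_0)$ in terms of the quantities $\ln|g_t(z_j)/g_{t_0}(z_j)|$, whose normalized limits exist by hypothesis, and the existence of each $\lambda_k(t_0)$ follows. Any correct proof of (2)$\Rightarrow$(1) needs a decoupling device of this kind (note it also requires the tips, hence the driving points $\xi_k(t_0)$, to be distinct); this idea is absent from your proposal.
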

\begin{proof}
	First of all note that (1.)$\Rightarrow$(3.) follows immediately from Theorem 2 of 
	\cite{BoehmLauf}. On top of this, (3.)$\Rightarrow$(2.) is trivial, so the only thing
	we are going to prove is (2.)$\Rightarrow$(1.).
	
	For this, let $t_0\in[0,T]$ and $t>t_0$. The other case $t<t_0$ can be treated
	in the same way.
	Since $t\mapsto g_t(z)$ is differentiable at $t_0$ so is $t\mapsto \log(g_t(z))$ 
	for every $z\in\Omega_{t_0}\setminus\{0\}$. The function $\log(g_t(z))$ is
	multiple valued, but its derivative is single valued, so the following limit exists and is 
	independent of the branch of the logarithm:
	\[
		\lim_{t\searrow t_0}\frac{1}{t-t_0}\,\Re\left(\log\frac{g_t(z)}{g_{t_0}(z)}\right) = 
		\lim_{t\searrow t_0}\frac{1}{t-t_0}\, \ln\left|
			\frac{g_t(z)}{g_{t_0}(z)}\right|.
	\]
	Next we use Lemma 10 from \cite{BoehmLauf} and the mean value theorem to get 
	\begin{align}\label{Equ-Boundedness}\begin{split}
		\ln\left|\frac{g_t(z)}{g_{t_0}(z)}\right| &=
		\frac{1}{2\pi} \sum_{k=1}^m \int_{s_{k;t_0,t,t}} -\ln|(g_{t_0}\circ g_t^{-1})(\xi)|\;
			\Re\big(\Phi(\xi, g_t(z); t)\big)\, |\intd\xi|\\
		&= \frac{1}{2\pi} \sum_{k=1}^m \Re\big(\Phi(\xi^{(k)}_{t,t_0}, g_t(z); t)\big) 
			\int_{s_{k;t_0,t,t}} -\ln|(g_{t_0}\circ g_t^{-1})(\xi)| \, |\intd\xi|.
	\end{split}\end{align}
	Note that $\Phi(\xi^{(k)}_{t,t_0},g_t(z);t)$ tends to $\Phi(\xi_k(t_0),g_{t_0}(z);t_0)$ as $t\searrow t_0$
	by Lemma 19 from \cite{BoehmLauf}. This is based on the fact
	$s_{k;t_0,t,t}\ni\xi^{(k)}_{t,t_0}\rightarrow \xi_k(t_0)$ as $t\searrow t_0$, see 
	Proposition 8 from \cite{BoehmLauf}. We write
	\[
		\int_{s_{k;t_0,t,t}} -\ln|(g_{t_0}\circ g_t^{-1})(\xi)| \, |\intd\xi|
		=:c_k(t,t_0).
	\]
	Note that for each $k\in\{1,\ldots,m\}$, $\Re\big(\Phi(\xi^{(k)}_{t,t_0}, g_t(z); t)\big)$ and $c_k(t,t_0)$ are 
	positive. Moreover, the limit $\lim_{t\searrow t_0} \frac{1}{t-t_0} \ln|g_t(z)/g_{t_0}(z)|$
	exists by assumption for any $z\in\Omega_{t_0}$. Summarizing, equation \eqref{Equ-Boundedness} shows that 
	$\frac{c_k(t,t_0)}{t-t_0}$ is bounded for all $t\in(t_0,T]$. Together with Lemma 19 from \cite{BoehmLauf} we find
	\begin{align}\label{Equ-LinearCombination}
		\ln\left|\frac{g_t(z)}{g_{t_0}(z)}\right|= \frac{1}{2\pi} \sum_{k=1}^m
		 \Re\big(\Phi(\xi_k(t_0), g_{t_0}(z); t_0)\big) 
			c_k(t,t_0) + o(|t-t_0|).
	\end{align}
	From the proof of Theorem 2 of \cite{BoehmLauf} we can see that 
	$\lambda_k^+(t_0):=\lim_{t\searrow t_0}\frac{\lmr(f_{k;t,t_0})-\lmr(f_{k;t_0,t_0})}{t-t_0}$ 
	exists if and only if $\lim_{t\searrow t_0}\frac{c_k(t,t_0)}{t-t_0}$ exists.
	Consequently we are going to prove the existence of the limit 
	$\lim_{t\searrow t_0}\frac{c_k(t,t_0)}{t-t_0}$.
	
	For this purpose we show that we find $z_1,\ldots,z_m\in\Omega_{t_0}$ (independently of $t$) such that 
	the matrix $A:=[a_{j,k}]_{j,k=1}^m$, with $a_{j,k}:=\Re\big(\Phi(\xi_k(t_0), g_{t_0}(z_j); t_0)\big)$, 
	is invertible. Then, equation \eqref{Equ-LinearCombination} yields
	\[
		\big(c_1(t,t_0),\ldots,c_m(t,t_0)\big)^T = \frac{1}{2\pi} A^{-1} \big(\ln|\tfrac{g_t(z)}{g_{t_0}(z_1)}| ,\ldots  \ln|\tfrac{g_t(z)}{g_{t_0}(z_m)}|\big)^T + o(|t-t_0|),
	\]
	and the existence of the limits $\lim_{t\searrow t_0}\frac{c_k(t,t_0)}{t-t_0}$ follows immediately.
	
	To find $z_1,\ldots,z_m,$ recall that
	$\Phi(\xi_k(t_0), g_{t_0}(\gamma_k(t_0));t_0) = \infty$ and
	$\Re\big(\Phi(\xi_k(t_0), g_{t_0}(\gamma_j(t_0));t_0)\big) = 0$ if $j\ne k$. 
	For $k\in\{1,\ldots,m\}$ consider the preimage $L_k$ of the curve 
	$\delta(x):=1+\mathrm{i} x$, $x\ge 0$, under the mapping $z\mapsto \Phi(\xi_k(t_0),g_{t_0}(z);t_0)$.
	Since $L_k$ is a slit in $\Omega(t_0)$ landing at the point $\gamma_k(t_0)$, 
	$\Re(\Phi(\xi_j(t_0),g_{t_0}(z);t_0))\rightarrow 0$ as $z\in L_k$ tends to 
	$\partial\Omega(t_0)$ when $j\ne k$, while $\Re(\Phi(\xi_k(t_0),g_{t_0}(z);t_0))=1$ for 
	all $z\in L_k$ by construction.
	
	Thus we can choose $z_k\in\Omega(t_0)$ close enough to $\gamma_k(t_0)$ in order to get
	\[
		\Re\big(\Phi(\xi_k(t_0), g_{t_0}(z_k); t_0)\big) = 1,\qquad
		\Re\big(\Phi(\xi_j(t_0), g_{t_0}(z_k); t_0)\big) < \tfrac{1}{m} \text{ for all }j\ne k.
	\]
	Consequently the matrix $A$ is a diagonally dominant matrix, so 
	it is invertible as well.
	\begin{center}
	\begin{overpic}[scale=\scalefactor]
		{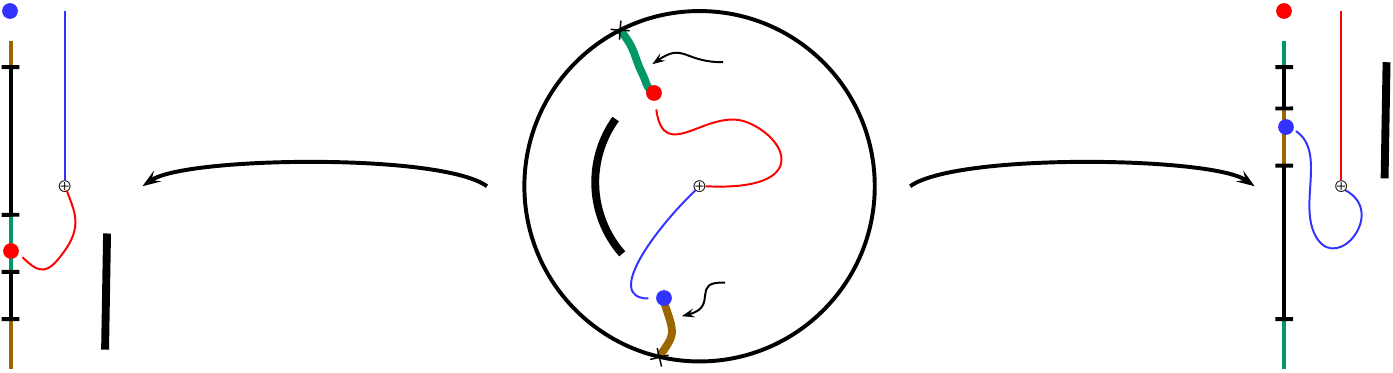}
		\put(52,22){$j$}
		\put(52,5){$k$}
		\put(10,16.5){$\Phi(\xi_k(t_0),g_{t_0}(z),t_0)$}
		\put(65,16.5){$\Phi(\xi_l(t_0),g_{t_0}(z),t_0)$}
	\end{overpic}
	\end{center}

\end{proof}

The next lemma is a similar statement for the functions $h_{k;t}.$ Note, however, that here we only
need differentiability of $t\mapsto h_{k;t}(z_0)$ for one fixed $z_0\in \Delta_k(t_0)\setminus\{0\}.$

\begin{lemma} \label{Lem-KLE-Lambda2}
	Let be $t_0\in[0,T]$, $z_0\in\Delta_k(t_0)\setminus\{0\}$ and $k\in\{1,\ldots,m\}$. 
	Then the following three conditions are equivalent
	\begin{enumerate}
		\item The limit $\mu_k(t_0)$ exists.
		\item The function $t\mapsto h_{k;t}(z_0)$ is differentiable at $t_0$.
		\item The function $t\mapsto h_{k;t}(z)$ is differentiable at $t_0$ for every 
			$z\in\Delta_k(t_0)$ and and fulfills equation (\ref{Equ:LE})
			for all $z\in\Delta_k(t)$.
	\end{enumerate}	
\end{lemma}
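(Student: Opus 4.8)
The plan is to establish the cycle $(1)\Rightarrow(3)\Rightarrow(2)\Rightarrow(1)$, of which only the last step carries real content. Since $h_{k;t}$ is precisely the normalized conformal map of the single slit $\gamma_k[0,t]$ in the simply connected domain $\D$, with driving function $\zeta_k$ and with $\mu_k(t_0)$ equal by definition to the derivative of $t\mapsto\lmr(h_{k;t})$ at $t_0$, the equivalence $(1)\Leftrightarrow(3)$ is nothing but the classical Loewner theorem (Theorem \ref{Loewner_eq}) applied to the family $\{h_{k;t}\}$: differentiability of $t\mapsto\lmr(h_{k;t})$ at $t_0$ is equivalent to differentiability of $t\mapsto h_{k;t}(z)$ for all $z\in\Delta_k(t_0)$ together with the differential equation \eqref{Equ:LE}. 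The implication $(3)\Rightarrow(2)$ is trivial. Thus it remains to prove $(2)\Rightarrow(1)$, and here I would follow the scheme of the proof of Lemma \ref{Lem-KLE-Lambda1}, specialized to a single slit.

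Assume $t>t_0$ (the case $t<t_0$ being symmetric). As in the previous proof, differentiability of $t\mapsto h_{k;t}(z_0)$ at $t_0$ yields the existence of $\lim_{t\searrow t_0}\tfrac{1}{t-t_0}\ln\bigl|h_{k;t}(z_0)/h_{k;t_0}(z_0)\bigr|$. The single-slit analog of the representation coming from Lemma 10 of \cite{BoehmLauf}, with the Schwarz kernel $\frac{\zeta+w}{\zeta-w}$ in place of $\Phi$, reads
\[
  \ln\left|\frac{h_{k;t}(z_0)}{h_{k;t_0}(z_0)}\right|
  = \frac{1}{2\pi}\int_{\sigma_{k;t_0,t}} -\ln\bigl|(h_{k;t_0}\circ h_{k;t}^{-1})(\zeta)\bigr|\,
    \Re\!\left(\frac{\zeta+h_{k;t}(z_0)}{\zeta-h_{k;t}(z_0)}\right)|\intd\zeta|.
\]
Here the weight $-\ln|(h_{k;t_0}\circ h_{k;t}^{-1})(\zeta)|$ is positive on the arc $\sigma_{k;t_0,t}$, so the mean value theorem lets me pull the factor $\Re\bigl(\tfrac{\zeta^\ast+h_{k;t}(z_0)}{\zeta^\ast-h_{k;t}(z_0)}\bigr)$ out of the integral for some $\zeta^\ast=\zeta^\ast_{t,t_0}\in\sigma_{k;t_0,t}$, leaving $d(t,t_0):=\int_{\sigma_{k;t_0,t}}-\ln|(h_{k;t_0}\circ h_{k;t}^{-1})(\zeta)|\,|\intd\zeta|$. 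Letting $z_0\to0$ in the identity above, the kernel tends uniformly to $1$, which identifies $d(t,t_0)=2\pi\bigl(\lmr(h_{k;t})-\lmr(h_{k;t_0})\bigr)$.

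Because $\zeta_k$ is continuous (Remark \ref{rm:continuous}, i.e.\ Proposition 8 of \cite{BoehmLauf}), the arc $\sigma_{k;t_0,t}$ shrinks to $\zeta_k(t_0)$ and $\zeta^\ast\to\zeta_k(t_0)$ as $t\searrow t_0$, while $h_{k;t}(z_0)\to h_{k;t_0}(z_0)$; hence, using the continuity of the Schwarz kernel near the boundary (the analog of Lemma 19 of \cite{BoehmLauf}), the extracted factor converges to $P:=\Re\bigl(\tfrac{\zeta_k(t_0)+h_{k;t_0}(z_0)}{\zeta_k(t_0)-h_{k;t_0}(z_0)}\bigr)=\tfrac{1-|h_{k;t_0}(z_0)|^2}{|\zeta_k(t_0)-h_{k;t_0}(z_0)|^2}$, which is strictly positive since $h_{k;t_0}(z_0)\in\D$. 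Combining the last observations gives
\[
  \frac{1}{t-t_0}\ln\left|\frac{h_{k;t}(z_0)}{h_{k;t_0}(z_0)}\right|
  = \bigl(P+o(1)\bigr)\cdot\frac{\lmr(h_{k;t})-\lmr(h_{k;t_0})}{t-t_0},
\]
and since the left-hand side converges and $P>0$, the right derivative $\mu_k^+(t_0)$ exists; the symmetric argument for $t\nearrow t_0$ supplies the left derivative, so $\mu_k(t_0)$ exists, which is $(1)$. The decisive simplification over Lemma \ref{Lem-KLE-Lambda1} — and the reason differentiability at a single $z_0\neq0$ already suffices — is that one slit produces only the single unknown $d(t,t_0)$, so no matrix-invertibility (diagonal dominance) argument is needed; the only genuine technical points are the convergence $\zeta^\ast_{t,t_0}\to\zeta_k(t_0)$ and the continuity of the kernel, both inherited verbatim from the cited results.
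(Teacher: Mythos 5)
Your proof is correct, and its skeleton coincides with the paper's: $(1)\Leftrightarrow(3)$ is delegated to Theorem 2 of \cite{BoehmLauf} (i.e.\ Theorem~\ref{Loewner_eq}), $(3)\Rightarrow(2)$ is trivial, and $(2)\Rightarrow(1)$ runs the representation-formula argument of Lemma~\ref{Lem-KLE-Lambda1} specialized to one slit. The genuine difference is how you finish $(2)\Rightarrow(1)$. The paper extracts the kernel from the integral with an $o(|t-t_0|)$ error and then cites the \emph{proof} of Theorem 2 of \cite{BoehmLauf} for the assertion that $\lim_{t\searrow t_0}\frac{1}{t-t_0}\int_{\sigma_{k;t_0,t}}-\ln|(h_{k;t_0}\circ h_{k;t}^{-1})(\zeta)|\,|\intd\zeta|$ exists if and only if $\mu_k^+(t_0)$ exists. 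You instead pin this integral down exactly, as $2\pi\bigl(\lmr(h_{k;t})-\lmr(h_{k;t_0})\bigr)$, by letting $z_0\to 0$ in the Poisson representation (the kernel tends uniformly to $1$ while the left-hand side tends to the difference of logarithmic mapping radii), and then conclude from positivity of the limit kernel value $P$. This makes the implication self-contained -- no appeal to the internals of an external proof -- and the identity you derive is precisely the one the paper proves later, by a contour-integral argument, inside Lemma~\ref{Lem-ConnectionMuLambda}. A secondary gain of your version: by working with $\ln|\cdot|$ and the real Poisson kernel $\Re\bigl(\frac{\zeta+w}{\zeta-w}\bigr)$ you apply the mean value theorem to a real integrand with nonnegative weight, which is where that theorem actually applies; the paper's display pulls out a complex-valued kernel, which strictly speaking needs a real/imaginary splitting or a shrinking-arc continuity estimate to justify. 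Both routes ultimately rest on the same inputs: the representation formula of Lemma 10 of \cite{BoehmLauf}, the shrinking of the arc $\sigma_{k;t_0,t}$ to $\zeta_k(t_0)$ (Proposition 8 of \cite{BoehmLauf}), and the fact that the Poisson kernel is strictly positive at $h_{k;t_0}(z_0)\in\D$.
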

\begin{proof}
	First of all note that (1.)$\Rightarrow$(3.) follows immediately from Theorem 2 from
	\cite{BoehmLauf}. On top of this, (3.)$\Rightarrow$(2.) is trivial, so the only thing
	we need to prove is (2.)$\Rightarrow$(1.).
	
	Let $t>t_0$ and and $k\in\{1,\ldots,m\}$.
	Analogous to the proof of the previous lemma, we find
	\begin{align*}
		\log\left(\frac{h_{k;t}(z_0)}{h_{k;t_0}(z_0)} \right) &= 
		\frac{1}{2\pi} \int_{\sigma_{k;t,t_0}} 
			-\ln|(h_{k;t_0}\circ h_{k;t}^{-1})(\zeta)|\;
			\frac{\zeta_k(t)+h_{k;t}(z_0)}{\zeta_k(t)-h_{k;t}(z_0)}\, |\intd\zeta|\\
		&= \frac{\zeta_k(t_0)+h_{k;t_0}(z_0)}{\zeta_k(t_0)-h_{k;t_0}(z_0)}\,
			\frac{1}{2\pi} \int_{\sigma_{k;t,t_0}} 
			-\ln|(h_{k;t_0}\circ h_{k;t}^{-1})(\zeta)|\;|\intd\zeta| + o(|t-t_0|).
	\end{align*}
	The other case $t<t_0$ holds in the same way.
	From the proof of Theorem 2 of \cite{BoehmLauf} we can see that the limit
	\[
		\lim_{t\searrow t_0} \frac{1}{t-t_0} \int_{\sigma_{k;t,t_0}} 
			-\ln|(h_{k;t_0}\circ h_{k;t}^{-1})(\zeta)|\;|\intd\zeta|
	\]
	exists if and only if $\mu_k^+(t_0)$ exists, where 
	\[
		\mu^+_k(t_0):=\lim_{t\searrow t_0} \frac{\lmr(h_{k;t})-\lmr(h_{k;t_0})}{t-t_0}.
	\]
	Since $t\mapsto h_{k;t}(z_0)$ is differentiable at $t_0$ the proof is complete
\end{proof}

\begin{remark}
 The implication (2.)$\Rightarrow$(3.) in the previous lemma says that differentiability of 
 $t\mapsto h_{k;t}(z)$ at $t_0$ for just
 one point $z_0\in \Delta_k(t_0)\setminus\{0\}$ implies differentiability at $t_0$ for all $z\in \Delta_k(t_0).$\\
 We don't know whether the same is true in the case of $m>1$ slits. Note, however, that
 the proof of Lemma \ref{Lem-KLE-Lambda1} shows that there are $m$ points such that differentiability
 of $t\mapsto g_t(z_1),... , t\mapsto g_t(z_m)$ at $t_0$ together implies differentiability
 of $t\mapsto g_t(z)$ for all $z\in \Omega_{t_0}.$
 \end{remark}

Before we can proof Theorem \ref{The-KLEvsLE}, we need some preliminary lemmas.
\begin{lemma} \label{Lem-Inequality}
	Let $A, B \subset \D$ be bounded domains and  assume there exists 
	an $R>0$ so that
	\[
		A\cap B_{R}(1) = B\cap B_{R}(1) = \D\cap B_{R}(1)
	\]
	holds, where $B_{R}(z_0):=\{z\in\C\mid |z-z_0|<R\}$.
	Moreover let $T:A\rightarrow B$ be a conformal mapping from $A$ onto $B$, 
	where $T(1)=1$.\\[\lenpar]
	Then $c:=T'(1)>0$ and for any $\delta>0$ there exists $\epsilon >0$ such that
	the inequality
	\[
		|z|^{c+\delta} \le |T(z)| \le |z|^{c-\delta}
	\]
	holds for all $z\in A\cap B_{\epsilon}(1)$.
\end{lemma}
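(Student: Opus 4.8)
The plan is to reduce the two-sided inequality to a single limit and then establish that limit by a reflection argument. First I would pass to logarithms: since $A\cap B_\epsilon(1)=\D\cap B_\epsilon(1)$ whenever $\epsilon\le R$, every point $z$ in question satisfies $|z|<1$, so $\log|z|<0$. Applying $\log$ and then dividing by the negative quantity $\log|z|$ (which reverses the inequalities), the claim $|z|^{c+\delta}\le|T(z)|\le|z|^{c-\delta}$ is equivalent to
\[
	\left|\frac{\log|T(z)|}{\log|z|}-c\right|\le\delta .
\]
Thus the lemma reduces to showing that $c=T'(1)>0$ is well defined and that $\log|T(z)|/\log|z|\to c$ as $z\to1$ from inside $\D$.

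For the existence and positivity of $c$ I would invoke the Schwarz reflection principle. Near $1$ both domains coincide with $\D$, so $\partial\D\cap B_R(1)$ is a free analytic boundary arc of $A$ that $T$ carries onto an arc of $\partial\D$ with $T(1)=1$; hence $T$ extends analytically to a full neighbourhood of $1$ via $T(1/\bar z)=1/\overline{T(z)}$. Because this extension maps the circle to the circle and the disk side to the disk side in an orientation-preserving way, the tangent direction at $1$ (the imaginary direction) is preserved, which forces $T'(1)$ to be real, and disk-preservation makes it positive; so $c>0$.

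Now set $u(z):=\log|T(z)|$ and $v(z):=\log|z|$. Both are harmonic in $\D\cap B_R(1)$ and vanish on the arc $\partial\D\cap B_R(1)$ (using $|T|=1$ there), and computing gradients via $|\nabla u(1)|=|T'(1)/T(1)|=c$ and $|\nabla v(1)|=1$, both radial, gives $\nabla u(1)=c\,\nabla v(1)$. The main obstacle is that one cannot conclude $u/v\to c$ from this first-order data alone: as $z$ approaches the arc tangentially, $v=\log|z|\to0$ far faster than $|z-1|$, so dividing a bound $u-cv=O(|z-1|^2)$ by $v$ would blow up. The remedy is to straighten the arc. I would substitute $s=\log z$, sending a neighbourhood of $1$ to a neighbourhood of $0$, the arc to the imaginary axis, and $\D$ to $\{\Re s<0\}$; crucially $\tilde v(s):=v=\Re s$ \emph{exactly}. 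Writing $w:=u-cv$, its transform $\tilde w(s)$ is harmonic, vanishes on $\{\Re s=0\}$, and satisfies $\nabla\tilde w(0)=0$ (inherited from $\nabla w(1)=0$ through the chain rule). By the Hadamard lemma $\tilde w(s)=\Re s\cdot Q(s)$ with $Q$ real-analytic and $Q(0)=\partial_{\Re s}\tilde w(0)=0$, whence $\tilde w(s)=\Re s\cdot O(|s|)$. Therefore
\[
	\frac{w(z)}{v(z)}=\frac{\tilde w(s)}{\Re s}=O(|s|)=O(|\log z|)\xrightarrow[z\to1]{}0 ,
\]
so $u/v\to c$, and in fact $|u/v-c|\le\delta$ holds on $B_\epsilon(1)$ once $\epsilon$ is chosen small. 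Translating back through the logarithmic equivalence of the first paragraph gives the stated inequality, with the boundary points $|z|=1$ handled trivially since there $|T(z)|=|z|=1$.
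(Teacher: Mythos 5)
Your proof is correct, and it shares the paper's skeleton --- Schwarz reflection to make $T$ analytic at $1$ with $c=T'(1)>0$, followed by an estimate of $\log|T(z)z^{-c}|$ against $\log|z|$ that exploits the vanishing of this quantity on the arc $\partial\D\cap B_R(1)$ --- but you execute the estimate by a different device. The paper stays in polar coordinates: continuity of $T'/T-c/z$ (which vanishes at $z=1$) gives $|T'(z)/T(z)-c/z|<\delta$ on $B_\epsilon(1)$, and the mean value theorem along each radius, starting from the circle where $h_\theta(1)=0$, yields $\bigl|\ln|T(z)z^{-c}|\bigr|\le\delta(1-|z|)\le-\delta\ln|z|$, which is exactly the claim. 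You instead flatten the arc via $s=\log z$, view $w=\log|T|-c\log|z|$ as a harmonic function vanishing on $\{\Re s=0\}$ with $\nabla\tilde w(0)=0$, and factor $\tilde w=\Re s\cdot Q$ with $Q(0)=0$. The two mechanisms are at bottom the same: radial segments in $z$ are horizontal segments in $s$, and the Hadamard factorization is the integrated form of the paper's mean value step, since $Q(x,y)=\int_0^1\partial_x\tilde w(tx,y)\,\intd t$ and smallness of $Q$ near $0$ is the same information as the paper's gradient bound. What your version buys is a genuine limit with a rate, $\log|T(z)|/\log|z|=c+\LandauO(|\log z|)$, rather than only the $\delta$--$\epsilon$ statement; what the paper's buys is elementarity (nothing beyond the MVT). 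Two minor points. First, your appeal to the Hadamard lemma tacitly requires $\tilde w$ to be real-analytic in a \emph{full} neighborhood of $0$, not just harmonic in a half-disk; this does follow from your own reflection step (e.g.\ $\tilde w=\Re\bigl(\log T(e^s)-cs\bigr)$ with $T$ analytic and nonvanishing near $1$), but it should be said, since otherwise an odd-reflection argument for $\tilde w$ across the axis would be needed first. Second, your closing remark about points with $|z|=1$ is vacuous: $A\subset\D$ is open, so every $z\in A\cap B_\epsilon(1)$ already satisfies $|z|<1$.
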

\begin{proof}
First of all, we can extend the function $T$ to a conformal map in $B_{R}(1)$ by using the Schwarz reflection principle. 
As the arc $\partial\D\cap B_R(1)$ is mapped onto an arc of $\partial \D$ and $T(1)=1$,
we have $c:=T'(1)>0$.
\begin{center}
\begin{overpic}[scale=\scalefactor]
	{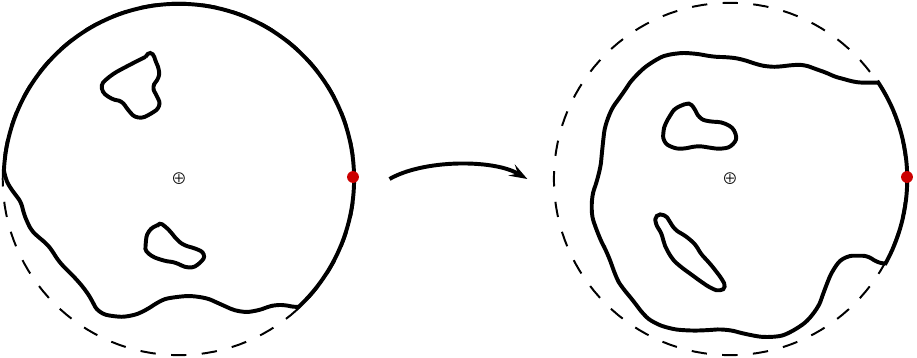}
	\put(25,25){$A$}
	\put(87,20){$B$}
	\put(35.5,19){$1$}
	\put(96,19){$1$}
	\put(20.5,19){$0$}
	\put(80.5,19){$0$}
	\put(49.5,22){$T$}
\end{overpic}
\end{center}
Now we can choose $\epsilon\in(0,R)$ small enough such that 
\[
	\Big| \frac{T'(z)}{T(z)} - \frac{c}{z}\Big| <\delta,\quad \text{for all }z\in B_{\epsilon}(1).
\]
Next we set $\gamma_\theta(r):=r\cdot e^{i \theta}$ for all $r\in[r_0,1]$ and all 
$|\theta|<\phi$. Hereby we can choose $r_0$ close enough to $1$ and $\phi>0$ small enough
to get $\gamma_\theta(r)\in B_\epsilon(1)$ for all $r\in[r_0,1]$ and all $\theta\in(-\phi,\phi)$.
Moreover, for $r\in[r_0,1]$ and $\theta\in(-\phi,\phi),$ we define
\[
	h_\theta(r):=\Re\left( \log \frac{T(\gamma_\theta(r))}{(\gamma_\theta(r))^c} \right)
		= \ln \left|\frac{T(\gamma_\theta(r))}{(\gamma_\theta(r))^c} \right|.
\]
Note that there is an analytic branch of the logarithm of $\frac{T(z)}{z^c}$ in $B_\epsilon(1)$, 
so we find
\begin{align*}
	\left| \frac{\partial}{\partial r}{h}_\theta(r)\right| &= 
	\left|\Re\left(\left. {\frac{\intd}{\intd z} 
		\log\left( \frac{T(z)}{z^c} \right)}\right|_{z=\gamma_\theta(r)}
	\cdot \dot\gamma_\theta(r) \right)\right|\\
	&= \left|\Re\left(\left.\left(\frac{T'(z)}{T(z)} - \frac{c}{z}\right)
		\right|_{z=\gamma_\theta(r)} \cdot e^{i \theta} \right)\right|\\
	&\le \left|\left.  \frac{T'(z)}{T(z)} - \frac{c}{z} \right|_{z=\gamma_\theta(r)}
	\right| \le \delta.
\end{align*}
Moreover, we have $h_{\theta}(1)=0$ so we find
\[
	\ln(r^\delta) =\delta \ln(r) \le h_\theta(r) \le -\delta \ln(r) = \ln(r^{-\delta}).
\]
Finally we get $\ln(|z|^\delta)\le |\frac{T(z)}{z^c}| \le \ln(|z|^{-\delta})$ for all
$z\in \{r\cdot e^{i \theta}\mid r\in[r_0,1],\, \theta\in(-\phi,\phi)\}$, so the proof is 
complete.
\end{proof}
\begin{lemma} \label{Lem-ContinuityAlpha}
    The function $t\mapsto \alpha_k(t)$ is continuous and positive for all $t\in[0,T]$.\\
   Moreover, for $t_0\in[0,T],$ 
	\[
		\Big|\big(f_{k;t,\tau}\circ h_{k;t}^{-1}\big)'\big(a\big) \Big|
		\rightarrow \alpha_k(t_0)
	\]
	as $[0,T]^2\times \partial \D \ni (t,\tau, a) \rightarrow (t_0,t_0, \zeta_k(t_0))$.
\end{lemma}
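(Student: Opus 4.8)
The plan is to prove the stronger ``moreover'' statement; the continuity and positivity of $\alpha_k$ then follow by specializing to $\tau=t$ and $a=\zeta_k(t)$ and using the continuity of the driving function $\zeta_k$ (Remark \ref{rm:continuous}). Write $F_{t,\tau}:=f_{k;t,\tau}\circ h_{k;t}^{-1}$, so that $\alpha_k(t)=|F_{t,t}'(\zeta_k(t))|$ and the quantity in the statement is $|F_{t,\tau}'(a)|$. The crucial geometric observation is that $h_{k;t}$ ``opens up'' the slit $\gamma_k[0,t]$ into the boundary, so the source domain of $F_{t,\tau}$ is $\D$ with only the images of the other slits removed, and near $\zeta_k(t)$ both the source and the target of $F_{t,\tau}$ coincide with $\D$. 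Thus $F_{t,\tau}$ maps a free arc of $\partial\D$ around $\zeta_k(t)$ onto a free arc of $\partial\D$ around $\xi_k(t,\tau)$; this is exactly the situation of Lemma \ref{Lem-Inequality}, which already yields $F_{t,\tau}'(\zeta_k(t))>0$ after a rotation, hence the positivity $\alpha_k(t)>0$, and which lets us extend $F_{t,\tau}$ analytically across the free arc by Schwarz reflection.

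First I would fix $t_0$ and produce a neighbourhood $V$ of $\zeta_k(t_0)$ and a neighbourhood of $(t_0,t_0)$ on which the reflected maps $\widetilde F_{t,\tau}$ are all defined and conformal. The point is that the length of the free arc is bounded below by the distance from $\zeta_k(t)$ to the images of the other slits; since the slits are disjoint (so $\gamma_j(0)\ne\gamma_k(t_0)$ for $j\ne k$) and the driving functions are continuous, this distance stays bounded away from $0$ for $(t,\tau)$ near $(t_0,t_0)$, so the free arc does not collapse and $V$ can be chosen independently of $(t,\tau)$. Next I would record the interior convergence $F_{t,\tau}\to F_{t_0,t_0}$: viewing $F_{t,\tau}$ as the normalized ($0\mapsto0$, positive derivative) conformal map between the two ``disk-minus-far-slits'' domains, the Carath\'eodory kernel theorem applied to the jointly convergent domains gives locally uniform convergence on compact subsets of the interior, and in particular on the part $V\cap\D$ near $\zeta_k(t_0)$ (which is interior because the other slits stay away). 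Note that one cannot instead compose the interior convergences of $h_{k;t}^{-1}$ and $f_{k;t,\tau}$, since the intermediate value $h_{k;t}^{-1}(z)$ sits near the boundary tip $\gamma_k(t_0)$; hence the kernel theorem must be applied to the composite $F_{t,\tau}$ directly.

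The heart of the argument is to upgrade this interior convergence to a full neighbourhood of the boundary point. For this I would show that $\{\widetilde F_{t,\tau}\}$ is uniformly bounded on a fixed $V$, hence normal: since $F_{t,\tau}$ vanishes only at $0\notin V$, the function $-\log|F_{t,\tau}|$ is positive and harmonic on a lens region attached to the free arc, vanishes on the arc, and is uniformly bounded on the interior part of the boundary (by the interior convergence); the maximum principle then yields a uniform lower bound $|F_{t,\tau}|\ge\beta>0$ there, which through the reflection formula $\widetilde F_{t,\tau}(z)=1/\overline{F_{t,\tau}(1/\bar z)}$ bounds $\widetilde F_{t,\tau}$ on the exterior side as well. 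Normality together with the interior convergence and Vitali's theorem gives $\widetilde F_{t,\tau}\to\widetilde F_{t_0,t_0}$ locally uniformly on all of $V$, so by Cauchy's estimates the derivatives converge locally uniformly and are equicontinuous. Finally, for $(t,\tau,a)\to(t_0,t_0,\zeta_k(t_0))$ with $a\in\partial\D\cap V$, splitting $|F_{t,\tau}'(a)-F_{t_0,t_0}'(\zeta_k(t_0))|$ into a uniform-convergence term and a continuity term gives $|F_{t,\tau}'(a)|\to|F_{t_0,t_0}'(\zeta_k(t_0))|=\alpha_k(t_0)$, which is the claim.

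I expect the main obstacle to be precisely this passage from the interior to the boundary, i.e. controlling $F_{t,\tau}'$ at the \emph{moving} anchor $\zeta_k(t)\in\partial\D$: Carath\'eodory convergence is only interior, so everything hinges on producing a free arc of uniform size and a uniform bound for the reflected maps. The two ingredients that make this work are the continuity of the driving functions and the disjointness of the slits, which together keep the free arc from shrinking to a point; I would expect the degenerate branch-point configuration $\gamma_1(0)=\gamma_2(0)$ at $t_0=0$ (excluded here by $\gamma_1(t_0)\ne\gamma_2(t_0)$) to be exactly where this mechanism breaks down.
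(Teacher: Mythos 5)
Your proposal follows essentially the same route as the paper's proof: Schwarz reflection of $f_{k;t,\tau}\circ h_{k;t}^{-1}$ across a free boundary arc that stays uniformly long near $\zeta_k(t_0)$ (by disjointness of the slits and continuity of the driving functions), interior locally uniform convergence of these composite maps (the paper cites Proposition 7 of \cite{BoehmLauf}, which is the multiply-connected substitute for your appeal to the Carath\'eodory kernel theorem), a normality argument upgrading this to locally uniform convergence on a full neighbourhood of $\zeta_k(t_0)$, and finally convergence of the derivatives at the moving boundary point. The only real difference is that you spell out the maximum-principle/reflection details behind the uniform bound, which the paper compresses into ``using a normality argument, it is easy to see''; the proposal is correct and matches the paper's argument.
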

\begin{proof}
	First of all, $\alpha_k(t)$ is positive, as the mapping $g_{t}\circ h_{k;t}^{-1}$ can be
	extended analytically to a conformal map in a small neighborhood around $\zeta(t)$.
	Consequently the derivative can not vanish. 
	
	The continuity of $\alpha_k$ follows from the second statement of the lemma, which we are going to prove below, because 
	\[
		\Big|\big(f_{k;t,t}\circ h_{k;t}^{-1}\big)'\big(\zeta_k(t)\big)\Big| = \alpha_k(t)
	\]
	holds for all $t\in[0,T]$ and because $\partial\D\ni\zeta_k(t)\to\zeta_k(t_0)$ as $t\rightarrow t_0$ by Remark \ref{rm:continuous}.
	
	Note that we find an $\epsilon >0$, so that the mapping 
	$H_{k;t,\tau}:=f_{k;t,\tau}\circ h_{k;t}^{-1}$ extends analytically to $B_{\epsilon}(\zeta_k(t))$ 
	by the Schwarz reflection principle.
	Since $\zeta_k(t)\rightarrow \zeta_k(t_0)$ as $t$ tends to $t_0$, 
	we find a small neighborhood $U$ around $\zeta_k(t_0)$, where $H_{k;t,\tau}$ is analytic if
	$t$ and $\tau$ are close enough to $t_0$. By Proposition 7 from \cite{BoehmLauf}, 
	$H_{k;t,\tau}$ converges locally uniformly in $U\cap\D$ to $H_{k;t_0,t_0}$ as $(t,\tau)\to(t_0,t_0)$. 
    Using a normality argument, it is easy to see that $H_{k;t,\tau}$ converges in fact locally uniformly on $U$ to $H_{k;t_0,t_0}$, so we have
	\[
		H_{k;t,\tau}\big(a\big) \rightarrow H_{k;t_0,t_0}\big(\zeta_k(t_0)\big)
		\quad \text{as}\quad [0,T]^2\times \partial \D \ni (t,\tau, a)\rightarrow (t_0,t_0, \zeta_k(t_0)).
	\]
	Finally, we	find $\big|H'_{k;t,\tau}\big(a\big)\big| \rightarrow 
	\big|H'_{k;t_0,t_0}\big(\zeta_k(t_0)\big)\big| = \alpha_k(t_0)$ as  $(t,\tau, a)\rightarrow (t_0,t_0, \zeta_k(t_0))$,
	so the proof is complete.
\end{proof}
\begin{lemma} \label{Lem-ConnectionMuLambda}
	Let be $t_0\in[0,T]$ and $k\in\{1,\ldots,m\}$. Then 
	\[
		\lim_{t\rightarrow t_0} \frac{\lmr(f_{k;t,t_0})-\lmr(f_{k;t_0,t_0})}
			{\lmr(h_{k;t})-\lmr(h_{k;t_0})} = \alpha_k^2(t_0).
	\]
	Consequently the limit $\lambda_k(t_0)$ exists if and only if the limit $\mu_k(t_0)$ exists. 
	Moreover, in this case $\lambda_k(t_0) = \alpha_k^2(t_0) \cdot \mu_k(t_0)$ holds.
\end{lemma}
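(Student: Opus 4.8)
The plan is to rewrite the numerator and the denominator as logarithmic mapping radii of two \emph{single-slit} maps, represent both as integrals over the \emph{same} boundary arc, and then extract two factors of $\alpha_k(t_0)$ — one from the change of variables between the arcs and one from the integrand. Since $f_{k;t_0,t_0}=g_{t_0}$, I would set $P_t:=f_{k;t,t_0}\circ g_{t_0}^{-1}$, the normalized map removing the slit $g_{t_0}(\gamma_k[t_0,t])$ from $D_{t_0}$, and $Q_t:=h_{k;t}\circ h_{k;t_0}^{-1}$, the normalized map removing $h_{k;t_0}(\gamma_k[t_0,t])$ from $\D$; the chain rule at $z=0$ gives $\lmr(P_t)=\lmr(f_{k;t,t_0})-\lmr(f_{k;t_0,t_0})$ and $\lmr(Q_t)=\lmr(h_{k;t})-\lmr(h_{k;t_0})$, so the quotient we must study is $\lmr(P_t)/\lmr(Q_t)$. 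Applying the representation of Lemma 10 of \cite{BoehmLauf} to the single-slit evolutions $P_t$ (in $D_{t_0}$) and $Q_t$ (in $\D$) exactly as in the proofs of Lemmas \ref{Lem-KLE-Lambda1} and \ref{Lem-KLE-Lambda2}, and letting $z\to0$, the kernels collapse because $\Phi(\xi,0;D)=1$ and $\tfrac{\zeta+0}{\zeta-0}=1$; this yields
\[
  \lmr(P_t)=\frac{1}{2\pi}\int_{s_{k;t_0,t,t_0}} -\ln\bigl|(g_{t_0}\circ f_{k;t,t_0}^{-1})(\xi)\bigr|\,|\intd\xi|,\qquad \lmr(Q_t)=\frac{1}{2\pi}\int_{\sigma_{k;t_0,t}} -\ln\bigl|(h_{k;t_0}\circ h_{k;t}^{-1})(\zeta)\bigr|\,|\intd\zeta|.
\]

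Both arcs are images of the same piece $\gamma_k[t_0,t]$, and the map carrying $\sigma_{k;t_0,t}$ onto $s_{k;t_0,t,t_0}$ is $H_t:=f_{k;t,t_0}\circ h_{k;t}^{-1}$, i.e. $\xi=H_t(\zeta)$. Substituting this into the first integral, a direct computation gives $(g_{t_0}\circ f_{k;t,t_0}^{-1})\circ H_t=g_{t_0}\circ h_{k;t}^{-1}=H\circ(h_{k;t_0}\circ h_{k;t}^{-1})$, where $H:=g_{t_0}\circ h_{k;t_0}^{-1}$ is the map from the definition of $\alpha_k$, analytic across $\partial\D$ near $\zeta_k(t_0)$ with $|H'(\zeta_k(t_0))|=\alpha_k(t_0)$. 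Hence, writing $w(\zeta):=(h_{k;t_0}\circ h_{k;t}^{-1})(\zeta)\in\D$, both quantities become integrals over the single arc $\sigma_{k;t_0,t}$:
\[
  \lmr(P_t)=\frac{1}{2\pi}\int_{\sigma_{k;t_0,t}} \bigl(-\ln|H(w(\zeta))|\bigr)\,\bigl|H_t'(\zeta)\bigr|\,|\intd\zeta|,\qquad \lmr(Q_t)=\frac{1}{2\pi}\int_{\sigma_{k;t_0,t}} \bigl(-\ln|w(\zeta)|\bigr)\,|\intd\zeta|.
\]

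It then remains to compare the Jacobian factor and the two integrands. Every point of $\sigma_{k;t_0,t}$ tends to $\zeta_k(t_0)$ as $t\to t_0$, so by the second assertion of Lemma \ref{Lem-ContinuityAlpha} the factor $|H_t'(\zeta)|$ converges to $\alpha_k(t_0)$ uniformly on $\sigma_{k;t_0,t}$. Likewise $w(\zeta)\to\zeta_k(t_0)$, and after rotating $\zeta_k(t_0)$ and $\xi_k(t_0)$ to $1$ the map $H$ satisfies the hypotheses of Lemma \ref{Lem-Inequality} with $c=|H'(\zeta_k(t_0))|=\alpha_k(t_0)$ (near $\zeta_k(t_0)$ both its domain and its image coincide with $\D$, using $\gamma_1(t_0)\ne\gamma_2(t_0)$ so that the other slit stays away). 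Thus for every $\delta>0$ and all $t$ close to $t_0$,
\[
  (\alpha_k(t_0)-\delta)\bigl(-\ln|w(\zeta)|\bigr)\le -\ln|H(w(\zeta))|\le(\alpha_k(t_0)+\delta)\bigl(-\ln|w(\zeta)|\bigr)
\]
uniformly on $\sigma_{k;t_0,t}$. Multiplying this pointwise estimate by the corresponding bound $\alpha_k(t_0)-\delta\le|H_t'(\zeta)|\le\alpha_k(t_0)+\delta$ and integrating gives $(\alpha_k(t_0)-\delta)^2\le\lmr(P_t)/\lmr(Q_t)\le(\alpha_k(t_0)+\delta)^2$ for $t$ near $t_0$; letting $\delta\to0$ yields the limit $\alpha_k^2(t_0)$. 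The main obstacle is precisely this integrand comparison: one must know the scaling $-\ln|H(w)|\approx\alpha_k(t_0)(-\ln|w|)$ \emph{uniformly} down to the boundary point $\zeta_k(t_0)$, which is exactly the content of Lemma \ref{Lem-Inequality}, matched with the uniform convergence $|H_t'|\to\alpha_k(t_0)$ from Lemma \ref{Lem-ContinuityAlpha}.

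Finally, the stated consequence is immediate. Writing
\[
  \frac{\lmr(f_{k;t,t_0})-\lmr(f_{k;t_0,t_0})}{t-t_0}=\frac{\lmr(P_t)}{\lmr(Q_t)}\cdot\frac{\lmr(h_{k;t})-\lmr(h_{k;t_0})}{t-t_0},
\]
the first factor tends to the finite, strictly positive number $\alpha_k^2(t_0)$ (positivity of $\alpha_k$ is part of Lemma \ref{Lem-ContinuityAlpha}, and $\lmr(h_{k;t})-\lmr(h_{k;t_0})\neq0$ for $t\neq t_0$ by strict monotonicity of $\lmr$). Consequently the left-hand limit $\lambda_k(t_0)$ exists if and only if $\mu_k(t_0)$ exists, and in that case $\lambda_k(t_0)=\alpha_k^2(t_0)\,\mu_k(t_0)$.
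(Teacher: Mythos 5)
Your proposal is correct and follows essentially the same route as the paper's own proof: the same reduction of numerator and denominator to single-slit $\lmr$-increments written as boundary integrals, the same change of variables via $H_t=f_{k;t,t_0}\circ h_{k;t}^{-1}$ together with the identity $(g_{t_0}\circ f_{k;t,t_0}^{-1})\circ H_t=(g_{t_0}\circ h_{k;t_0}^{-1})\circ(h_{k;t_0}\circ h_{k;t}^{-1})$, and the same two key inputs, Lemma \ref{Lem-Inequality} applied to $g_{t_0}\circ h_{k;t_0}^{-1}$ and Lemma \ref{Lem-ContinuityAlpha} for the Jacobian factor. The only cosmetic deviations are that you derive the integral formulas from Lemma 10 of \cite{BoehmLauf} evaluated at $z=0$ where the paper uses a direct Cauchy-integral computation, and that you use uniform convergence of $|H_t'|$ on the shrinking arc where the paper pulls out the factor by the mean value theorem for integrals; these are equivalent.
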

\begin{proof}
	First of all we are going to prove the case $t\searrow t_0$, i.e. we show that
	\[
		\lim_{t\searrow t_0} \frac{\lmr(f_{k;t,t_0})-\lmr(f_{k,t_0,t_0})}{t-t_0}
		= \alpha^2_k(t_0)\cdot \lim_{t\searrow t_0} \frac{\lmr(h_{k;t})-\lmr(h_{k;t_0})}{t-t_0}.
	\]
	Let be $t_0\in[0,T]$ and $k\in\{1,\ldots,m\}$. Since there is no risk of confusion we
	omit the index $k$.
	\begin{center}
	\begin{overpic}[scale=\scalefactor]
		{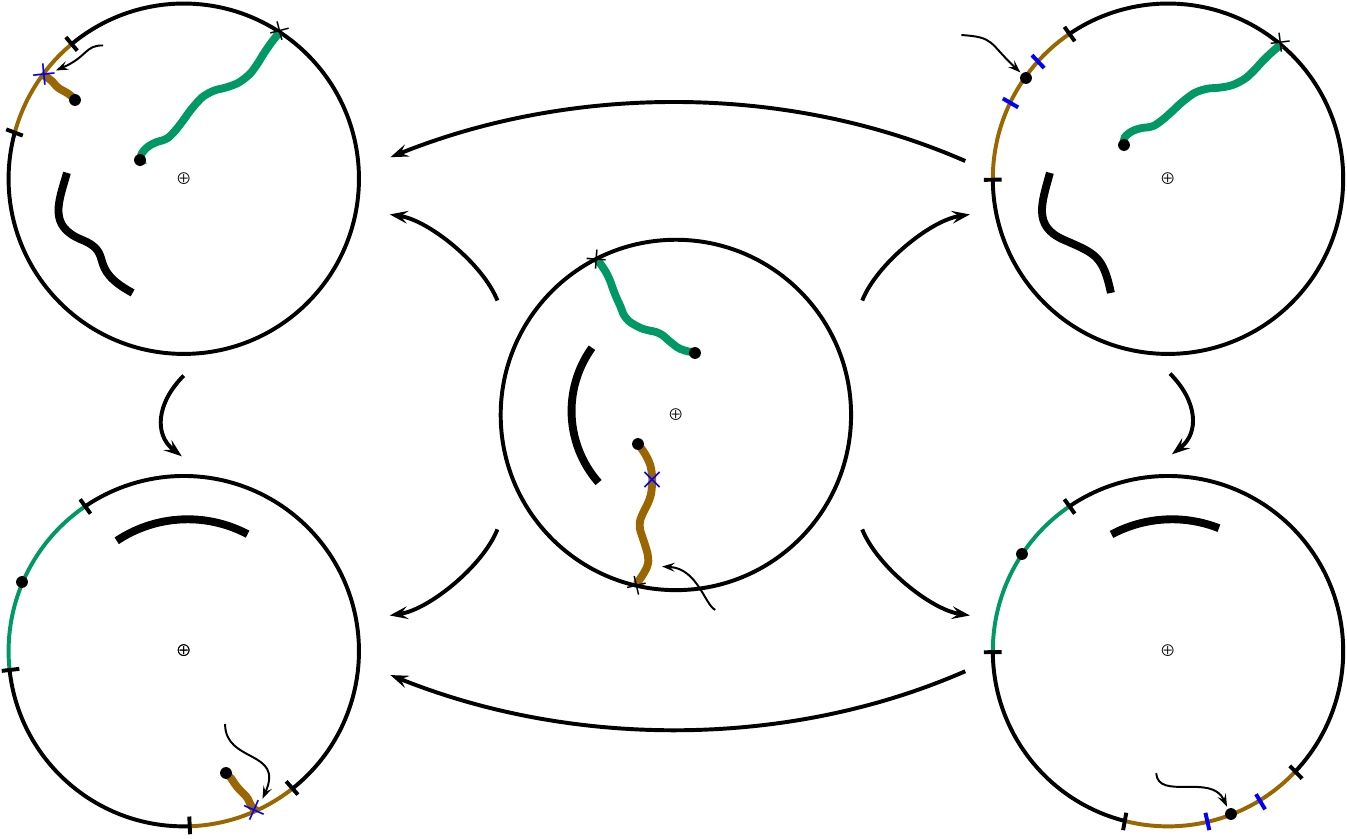}
		\put(67,19){$f_{t,t_0}$}
		\put(67,41){$h_{t}$}
		\put(29.5,22){$f_{t_0,t_0}$}
		\put(31,42){$h_{t_0}$}
		\put(53,15){$k$}
		\put(51,37){$t_0$}
		\put(49,25){$t_0$}
		\put(46,30){$t$}
		\put(47,5){$G_{t,t_0}$}
		\put(47,56.3){$F_{t,t_0}$}
		\put(7,30){$T_{t_0}$}
		\put(89,30){$H_{t,t_0}$}
		\put(84,17){$D(t,t_0)$}
		\put(89,-2){\color{blue}$s_{t_0,t,t_0}$}
		\put(76.5,54){\color{blue}$\sigma_{t_0,t}$}
		\put(15,9){$\xi(t_0)$}
		\put(82,6){$\xi(t,t_0)$}
		\put(8,58){$\zeta(t_0)$}
		\put(66,59){$\zeta(t)$}
	\end{overpic}
	\end{center}
	 Then we have with $G_{t,t_0}:= f_{t_0,t_0}\circ f_{t,t_0}^{-1},$
	\begin{align*}
		&\lmr(f_{t_0,t_0})-\lmr(f_{t,t_0}) = \log\left(\frac{\intd}{\intd z} 
			G_{t,t_0}(z)\Big|_{z=0} \right)
			=\log\left.\left(\frac{G_{t,t_0}(z)}{z} \right)\right|_{z=0}\\
			&\mbox{}\hspace{0.4cm}= \frac{1}{2\pi i} \int_{\partial D(t,t_0)} 
				\log\left(\frac{G_{t,t_0}(\xi)}{\xi} \right) \frac{\intd \xi}{\xi} = 
				 \frac{1}{2\pi} \int_{\partial D(t,t_0)} \log\left(\frac{G_{t,t_0}(\xi)}{\xi} 
				\right)	\intd \arg \xi\\
			&\mbox{}\hspace{0.4cm}= \frac{1}{2\pi} \int_{\partial D(t,t_0)} \ln 
				\left|\frac{G_{t,t_0}(\xi)}{\xi} \right| \intd \arg \xi,
	\end{align*}
	as $\lmr(f)$ is a real quantity. $|G_{t,t_0}|$ is constant on each concentric slit, so we find
	\[
		\lmr(f_{t_0,t_0})-\lmr(f_{t,t_0}) = \frac{1}{2\pi}  \int_{\partial\D} \ln 
		\left|\frac{G_{t,t_0}(\xi)}{\xi} \right| |\intd \xi|
		=\frac{1}{2\pi}  \int_{s_{t_0,t,t_0}} \ln 
		|G_{t,t_0}(\xi)|\, |\intd \xi|.
	\]
	Next we set  $H_{t,t_0}:=f_{t,t_0}\circ h_t^{-1}$,
	$F_{t,t_0}:=h_{t_0}\circ h_{t}^{-1}$ and $T_{t_0}:=f_{t_0,t_0}\circ h_{t_0}^{-1}$.
	Consequently we have by substitution, the mean value theorem and by using the relation
	$G_{t,t_0}\circ H_{t,t_0} = T_{t_0}\circ F_{t,t_0}$
	\begin{align*}
		&\lmr(f_{t_0,t_0})-\lmr(f_{t,t_0}) = \frac{1}{2\pi}  \int_{\sigma_{t_0,t}}
			|H_{t,t_0}'(\zeta)|\cdot\ln \big|(G _{t,t_0}\circ H_{t,t_0})(\zeta)\big|\, 
			|\intd \zeta|\\
		&\mbox{}\hspace{0.4cm}= \frac{1}{2\pi} |H_{t,t_0}'(\zeta_{t,t_0})|
			\int_{\sigma_{t_0,t}} \ln \big|( T_{t_0}\circ F_{t,t_0})(\zeta)\big|\, 
			|\intd \zeta|
	\end{align*}
	for some $\zeta_{t,t_0}\in \sigma_{t_0,t}.$ Since $\zeta_{t,t_0}\rightarrow \zeta(t_0)$, we find 
	$|H'_{t,t_0}(\zeta_{t,t_0})|\rightarrow \alpha(t_0)$ as $t\searrow t_0$ by 
	Lemma \ref{Lem-ContinuityAlpha}. 
	
	Moreover, the function $\tilde T_{t_0}(z):= \frac{1}{\xi(t_0)}\cdot T_{t_0}(\zeta(t_0)z)$
	is a mapping that fulfills the conditions of Lemma \ref{Lem-Inequality}, so we find for every
	$\delta>0$ an $\epsilon>0$, so that
	\[
		|z|^{c+\delta} \le |\tilde T_{t_0}(z)| \le |z|^{c-\delta}
	\]
	holds for all $z\in B_\epsilon(1)\cap \D$, where $c=\tilde T_{t_0}'(1)>0$. Note that
	$c=|T_{t_0}'(\zeta(t_0))|=\alpha(t_0)$. As a consequence of $|\xi(t_0)|=|\zeta(t_0)|=1$ 
	we get 
	\[
		|z|^{c+\delta} \le | T_{t_0}(z)| \le |z|^{c-\delta}
	\]
	for all $z\in B_{\epsilon}(\zeta(t_0))$. On top of this, if $t$ is close enough
	to $t_0$ we get $F_{t,t_0}(\zeta)\in B_\epsilon(\zeta(t_0))$ for all $\zeta\in\sigma_{t_0,t}$.
	Thus we have for all $t\in(t_0,t_0+\rho)$ where $\rho(\delta)>0$ is small
	\begin{align*}
		\frac{1}{2\pi}& |H_{t,t_0}'(\zeta_{t,t_0})| (\alpha(t_0)+\delta) \int_{\sigma_{t_0,t}}
			\ln|F_{t,t_0}(\zeta)|\,|\intd\zeta|\\
		&\le \lmr(f_{t_0,t_0})-\lmr(f_{t,t_0}) 
		\le \frac{1}{2\pi} |H_{t,t_0}'(\zeta_{t,t_0})| (\alpha(t_0)-\delta) \int_{\sigma_{t_0,t}}
			\ln|F_{t,t_0}(\zeta)|\,|\intd\zeta|.
	\end{align*}
	Moreover in the same way as before we can see that
	\[
		\frac{1}{2\pi}\int_{\sigma_{t_0,t}} \ln|F_{t,t_0}(\zeta)|\,|\intd\zeta| = 
			\lmr(h_{t_0})-\lmr(h_{t}).
	\]
	By combining this with the previous inequality we get for all $t\in(t_0,t_0+\rho),$
	\[
		|H_{t,t_0}'(\zeta_{t,t_0})| (\alpha(t_0)-\delta)
		\le \frac{ \lmr(f_{t_0,t_0})-\lmr(f_{t,t_0})}{\lmr(h_{t_0})-\lmr(h_{t})}
		\le |H_{t,t_0}'(\zeta_{t,t_0})| (\alpha(t_0)+\delta).
	\]
	As $\delta>0$ is arbitrary, we get in the limit case the existence of $\lambda(t_0)$ if and
	only if $\mu(t_0)$ exists. Moreover we find
	\[
		\lambda(t_0) = \alpha(t_0)^2 \cdot \mu(t_0)
	\]
	as $ |H_{t,t_0}'(\zeta_{t,t_0})|$ tends to $\alpha(t_0)$ by Lemma \ref{Lem-ContinuityAlpha}, 
	so the proof is complete.
	
	The other case $t\nearrow t_0$, i.e.
	\[
		\lim_{t\nearrow t_0} \frac{\lmr(f_{k;t,t_0})-\lmr(f_{k;t_0,t_0})}{t-t_0}
		= \alpha^2_k(t_0)\cdot \lim_{t\nearrow t_0} \frac{\lmr(h_{k;t})-\lmr(h_{k;t_0})}{t-t_0}
	\]
	follows in the same way.
\end{proof}
\begin{lemma} \label{Lem-LambdaSimplyConnectedCase}
	Let $\Omega$ be simply connected, i.e. $\Omega=\D$. Then $\alpha_k(t)\le 1$ 
	for all $t\in[0,T]$.
\end{lemma}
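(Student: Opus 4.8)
The plan is to deduce $\alpha_k(t)\le 1$ from the classical Schwarz lemma applied to a suitable self-map of $\D$. Since $\alpha_k(t)=\big|(g_t\circ h_{k;t}^{-1})'(\zeta_k(t))\big|$, I would work with the inverse map $\psi:=h_{k;t}\circ g_t^{-1}=(g_t\circ h_{k;t}^{-1})^{-1}$ and show $|\psi'(\xi_k(t))|\ge 1$, where $\xi_k(t)=g_t(\gamma_k(t))$. First I would check that $\psi$ is a holomorphic self-map of $\D$: because $\Omega=\D$, the map $g_t^{-1}$ sends $\D$ onto $\Omega_t=\D\setminus\bigcup_j\gamma_j[0,t]$, and this set is contained in $\Delta_k(t)=\D\setminus\gamma_k[0,t]$, on which $h_{k;t}$ is defined and takes values in $\D$; hence $\psi(\D)=h_{k;t}(\Omega_t)\subseteq\D$. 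The decisive point is that the normalizations $g_t(0)=0$ and $h_{k;t}(0)=0$ force $\psi(0)=h_{k;t}(g_t^{-1}(0))=h_{k;t}(0)=0$, which is exactly what excludes the disk automorphisms for which a boundary derivative can drop below $1$.

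Next I would invoke the Schwarz lemma in the form $|\psi(z)|\le|z|$ for all $z\in\D$. Because $g_t\circ h_{k;t}^{-1}$ extends analytically across $\zeta_k(t)$ by the Schwarz reflection principle (as already noted where $\alpha_k$ is defined) and its derivative there is nonzero by Lemma \ref{Lem-ContinuityAlpha}, the inverse $\psi$ extends analytically across $\xi_k(t)$ with $\psi(\xi_k(t))=\zeta_k(t)\in\partial\D$. Evaluating the Schwarz inequality along the radius $z=r\,\xi_k(t)$ gives $\tfrac{1-|\psi(r\xi_k(t))|}{1-r}\ge 1$ for $r\in(0,1)$; letting $r\to 1^-$ and using that this quotient converges to the boundary derivative yields $|\psi'(\xi_k(t))|\ge 1$. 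The inverse-function rule then gives $\alpha_k(t)=|\psi'(\xi_k(t))|^{-1}\le 1$, as claimed.

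The only slightly delicate step is the identification $\lim_{r\to 1^-}\tfrac{1-|\psi(r\xi_k(t))|}{1-r}=|\psi'(\xi_k(t))|$, and this is where the analytic extension is used: from the first-order expansion $\psi(r\xi_k(t))=\zeta_k(t)+\psi'(\xi_k(t))\,\xi_k(t)\,(r-1)+o(1-r)$ one computes that the quotient tends to $\overline{\zeta_k(t)}\,\xi_k(t)\,\psi'(\xi_k(t))$, which is a positive real number equal to $|\psi'(\xi_k(t))|$ since the boundary derivative of a conformal map carrying a boundary arc into $\partial\D$ is radial. I expect this to be the main obstacle only in the bookkeeping sense; the entire conceptual content is carried by the two observations $\psi(0)=0$ and $\psi(\D)\subseteq\D$, which together with Schwarz give the inequality at once.
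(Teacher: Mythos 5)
Your proof is correct, and it takes a genuinely different route from the paper's. The paper deduces the bound from capacity subadditivity: since $\Omega=\D$, Renggli's inequality $\lmr(A\cup B)\le\lmr(A)+\lmr(B)$ yields the monotonicity statement \eqref{Equ-lmrInequality}, and then, because $h_{k;t}=f_{k;t,0}$ in the simply connected case, Lemma \ref{Lem-ConnectionMuLambda} expresses $\alpha_k^2(t_0)$ as a limit of ratios of $\lmr$-increments each of which is $\le 1$. You instead argue pointwise for each fixed $t$: the composition $\psi=h_{k;t}\circ g_t^{-1}$ is a holomorphic self-map of $\D$ fixing $0$ (this is exactly where simple connectivity enters for you --- in the multiply connected case $g_t^{-1}$ lives only on the circular slit disk $D_t$, so $\psi$ is not a self-map of $\D$ and Schwarz does not apply; in the paper's proof, simple connectivity enters through Renggli's inequality instead), and the interior Schwarz lemma plus the analytic extension across $\xi_k(t)$ gives the boundary estimate $|\psi'(\xi_k(t))|\ge 1$, hence $\alpha_k(t)=|\psi'(\xi_k(t))|^{-1}\le 1$. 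Your approach is more elementary and self-contained: it needs no capacity estimates and no limiting process, only the classical Schwarz lemma and the reflection extension already introduced when $\alpha_k$ was defined. The paper's approach, on the other hand, is essentially free given its surrounding machinery, since Lemma \ref{Lem-ConnectionMuLambda} is needed anyway for Theorem \ref{The-KLEvsLE}. One small streamlining of your "delicate step": you do not actually need the radiality of the boundary derivative, since
\[
|\psi'(\xi_k(t))|\;\ge\;\Re\bigl(\overline{\zeta_k(t)}\,\xi_k(t)\,\psi'(\xi_k(t))\bigr)\;=\;\lim_{r\to1^-}\frac{1-|\psi(r\xi_k(t))|}{1-r}\;\ge\;1,
\]
where the middle equality follows directly from the first-order expansion of $|\psi(r\xi_k(t))|^2$; the positivity and reality of $\overline{\zeta_k(t)}\,\xi_k(t)\,\psi'(\xi_k(t))$, while true, can be bypassed.
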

\begin{proof}
	First, let $0\le\lt<\gt\le T$, $0\le\ltau<\gtau\le T$
	and $A:=f_{k;\lt,\ltau}(\gamma_k[\lt,\gt])$, $B:=f_{k;\lt,\ltau}(\bigcup_{j\ne k} 
	\gamma_j[\ltau,\gtau])$. By using the chain rule we get
	\begin{align*}
		\lmr(A)&=\lmr(f_{k;\gt,\ltau})-\lmr(f_{k;\lt,\ltau}),\quad
		\lmr(B)=\lmr(f_{k;\lt,\gtau})-\lmr(f_{k;\lt,\ltau}),\\
		&\lmr(A\cup B)=\lmr(f_{k;\gt,\gtau})-\lmr(f_{k;\lt,\ltau}).
	\end{align*}
	Furthermore, as $\Omega$ is simply connected, we have the following inequality 
	(see \cite{MR0155992}):
	\[
		\lmr(A\cup B) \le \lmr(A) + \lmr(B).
	\]
	By combining this inequality with the previous equations we obtain
	\begin{align} \label{Equ-lmrInequality}
		\lmr(f_{k;\gt,\gtau})-\lmr(f_{k;\lt,\gtau}) \le \lmr(f_{k;\gt,\ltau})-\lmr(f_{k;\lt,\ltau}).
	\end{align}
	Next we find together with Lemma \ref{Lem-ConnectionMuLambda}
	\[
		\alpha_k^2(t_0) = \lim_{t\to t_0} \frac{\lmr(f_{k;t,t_0})-\lmr(f_{k;t_0,t_0})}
			{\lmr(h_{k;t})-\lmr(h_{k;t_0})} =
			\lim_{t\searrow t_0} \frac{\lmr(f_{k;t,t_0})-\lmr(f_{k;t_0,t_0})}
			{\lmr(f_{k;t,0})-\lmr(f_{k;t_0,0})} \le 1.
	\]
\end{proof}
\begin{proof}[Proof of Theorem \ref{The-KLEvsLE}]
	This follows immediately from Lemma \ref{Lem-KLE-Lambda1}, Lemma \ref{Lem-KLE-Lambda2},
	Lemma \ref{Lem-ContinuityAlpha} and \ref{Lem-ConnectionMuLambda}.
\end{proof}
\begin{proof}[Proof of Proposition \ref{Cor-Disjoint2}]
	\begin{selflist}
	\item First, we find a 
		unique continuous function $v_2\colon[0,L]\rightarrow [0,T]$ so that 
		$\lmr(\tilde g_s)=s$ since $\lmr(h_{1,v_1(s)})=u_1(s)\leq Ks<s$.
		Note that the continuity is an immediate consequence of Proposition 7
		from \cite{BoehmLauf}. Consequently it remains to prove that $v_2$ is bijective. First we note that it is
		clear that $v_2([0,L])=[0,T]$, so it remains to show that $v_2$
		is injective.
	
	    Let $0\le s_1 < s_2\le L$
		and assume $v_2(s_1)=v_2(s_2).$ We denote by $f_{t,\tau}\colon\D\setminus(\gamma_1[0,t]\cup\gamma_2[0,\tau])\rightarrow \D$
		the normalized Riemann map from $\Omega\setminus(\gamma_1[0,t]\cup\gamma_2[0,\tau])$ onto
		$\D$.
		By using equation (\ref{Equ-lmrInequality}) with 
		$\lt:=v_1(s_1)$, $\gt:=v_1(s_2)$
		$\ltau:=v_2(s_1)$ and $\gtau:=v_2(s_2)$ we obtain
		\begin{align*}
			s_2- s_1&=\lmr(f_{\gt,\gtau})-\lmr(f_{\lt,\ltau}) =
				\lmr(f_{\gt,\ltau})-\lmr(f_{\lt,\ltau})\\
				&\le \lmr(f_{\gt,0})-\lmr(f_{\lt,0}) = \lmr(h_{1;\gt})-\lmr(h_{1;\lt})
				<s_2-s_1.
		\end{align*}
		This is a contradiction, so $v_2$ needs to be bijective. Note that this argumentation
		does not use the fact that $\gamma_1(0)\not=\gamma_2(0)$.
	\item Now we suppose that $u_1$ is continuously differentiable and prove that equation \eqref{Equ-KLE-everywhere} holds.
	First we set $\tilde\gamma_1(s):=(\gamma_1\circ v_1)(s)$,  $\tilde\gamma_2(s):=(\gamma_2\circ v_2)(s)$ and denote by
	 $\tilde f_{s_1,s_2}:\D\setminus(\tilde\gamma_1[0,s_1]\cup\tilde\gamma_2[0,s_2])\rightarrow \D$
		the normalized Riemann map from $\D\setminus(\tilde\gamma_1[0,s_1]
		\cup\tilde\gamma_2[0,s_2])$ onto $\D$.
		Let be $Z=\{0,\ldots,s_N\}$ a partition of the interval $[0,s]$ and
		\[
			S_1(s,Z):=\sum_{l=0}^{N-1} \lmr(\tilde{f}_{s_{l+1},s_l})- \lmr(\tilde{f}_{s_l,s_l}),\quad
			S_2(s,Z):=\sum_{l=0}^{N-1} \lmr(\tilde{f}_{s_l,s_{l+1}})- \lmr(\tilde{f}_{s_l,s_l}).
		\]
		Since $\lmr(\tilde{g}_s)=s$ for all $s\in[0,L],$ by Proposition 17 from \cite{BoehmLauf} the limits 
		$c_k(s):=\lim_{|Z|\to0}S_k(s,Z)$ exist and form increasing and Lipschitz continuous functions $s\mapsto c_k(s),$
		with $c_1(s)+c_2(s)=s$ for all $s\in[0,L].$ On the one hand, again by Proposition 17 from \cite{BoehmLauf}, the limits
		$$\tilde{\lambda}_k(s)=\lim_{t\to s}\frac{\lmr(\tilde{f}_{k;t,s})-\lmr(\tilde{f}_{k;s,s})}{t-s}$$
		exist and coincide with $\dot{c}_k(s)$ for every point $s\in[0,L]$ at which $c_k$ is differentiable. On the other hand,
		according to Lemmas \ref{Lem-ContinuityAlpha} and \ref{Lem-ConnectionMuLambda}, the continuous differentiability of
		$u_1$ implies that $s\mapsto \tilde{\lambda}_1(s)$ is continuous on $[0,L]$. Therefore, $c_1$ and hence $c_2(s)=s-c_1(s)$
		are, in fact, continuously differentiable. It follows that $s\mapsto \tilde{\lambda}_2(s)$ is also continuous and that
		$\tilde{\lambda}_1(s)+\tilde{\lambda}_2(s)=1$ for all $s\in[0,L].$ Now it remains to apply Theorem 2 from \cite{BoehmLauf}
		to conclude that $\tilde{g}_s$ satisfies equation \eqref{Equ-KLE-everywhere} for all $s\in[0,L].$
		
\end{selflist}
\end{proof}

	\section{Proof of Theorem \ref{counterexample} and Theorem \ref{lines}} 
\label{Cha-LocalBehaviour}

In this section we prove Theorems \ref{counterexample} and \ref{lines}. We will
use a different setting, namely the upper half-plane and the chordal Loewner equation, 
instead of the radial case in the unit disk. Here, the role of the logarithmic
mapping radius is played by the so called half-plane capacity, which has nicer 
properties for our purpose. First, we describe the chordal Loewner equation and prove the
chordal analogs of Theorems \ref{counterexample} and \ref{lines}. At the end of this chapter
we justify why it makes sense to consider this different setting.\\

Denote by $\Ha:=\{z\in\C\;|\;\Im(z)>0\}$ the upper half-plane. A bounded subset 
$A\subset\Ha$ is called a \emph{(compact) hull} if $A=\Ha\cap \overline{A}$ and 
$\Ha\setminus A$ is simply connected. By $g_A$ we denote the unique conformal
mapping from $\Ha\setminus A$ onto $\Ha$ with \emph{hydrodynamic normalization}, 
i.e. 
\begin{equation}\label{def_hcap}
	g_A(z)=z+\frac{b}{z}+\LandauO(|z|^{-2}) \quad
	\text{for} \quad |z|\to\infty
\end{equation}	
and for some $b\geq0$. The quantity $\hcap(A):=b$ is called \emph{half-plane capacity}
of $A.$ We note four important properties of $\hcap;$ see \cite{Lawler:2005}, p. 69 and p. 71.
\begin{lemma}\label{hcap}${}$
 
\begin{enumerate}
 \item[a)] $\hcap(c\cdot A)=c^2\cdot \hcap(A)$ for every $c>0$ and every hull $A.$
 \item[b)] If $A_1, A_2$ are two hulls such that $A_1\cup A_2$ is also a hull, then
 $$\hcap(A_1\cup A_2)\leq \hcap(A_1)+\hcap(A_2).$$  This inequality is strict if
 both hulls are nonempty.
 \item[c)] If $A_1, A_2$ are two hulls such that $A_1\cup A_2$ is a hull as well, then 
 $\hcap(A_1)\geq \hcap(g_{A_2}(A_1)).$ If both hulls are nonempty, then the inequality is strict.
 \item[d)]  If $A_1, A_2$ are hulls with $A_1\subset A_2$, then 
 $\hcap(A_2)-\hcap(A_1)= \hcap(g_{A_1}(A_2\setminus A_1)).$
\end{enumerate}
\end{lemma}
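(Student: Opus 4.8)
The plan is to reduce everything to a single representation of $\hcap$ in terms of a harmonic function, to prove the structural identity (d) from uniqueness of the hydrodynamic normalization, and then to obtain the inequalities (b) and (c) by a maximum-principle comparison, with (a) coming from scaling. For $z\in\Ha\setminus A$ set $u_A(z):=\Im(z)-\Im(g_A(z))$. Since $g_A$ is analytic, $u_A$ is harmonic; it vanishes on $\R\setminus A$ (where $g_A$ is real), equals $\Im(z)\ge 0$ on $\partial A$ (where $g_A$ maps into $\R$), and by the expansion \eqref{def_hcap} satisfies $u_A(z)=b\,\Im(z)/|z|^2+\LandauO(|z|^{-2})\to 0$ as $|z|\to\infty$. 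The minimum principle then gives $u_A\ge 0$, and evaluating the expansion at $z=iy$ yields the key identity $\hcap(A)=\lim_{y\to\infty} y\,u_A(iy)$. (Equivalently, $u_A(z)=\mathbb{E}^z[\Im(B_\tau)]$ with $\tau$ the exit time of $\Ha\setminus A$, which makes the sign and the boundary data transparent.) All four statements will be read off from this.

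Property (a) is immediate: the map $z\mapsto c\,g_A(z/c)$ is conformal from $\Ha\setminus cA$ onto $\Ha$ with hydrodynamic expansion $w+\frac{c^2 b}{w}+\LandauO(|w|^{-2})$, so by uniqueness it equals $g_{cA}$ and $\hcap(cA)=c^2\hcap(A)$. For (d), let $A_1\subset A_2$ and put $B:=g_{A_1}(A_2\setminus A_1)$, which is again a hull. Both $g_{A_2}$ and $g_B\circ g_{A_1}$ are conformal maps from $\Ha\setminus A_2$ onto $\Ha$ with hydrodynamic normalization, so by uniqueness $g_{A_2}=g_B\circ g_{A_1}$; composing the two expansions shows that the $1/z$-coefficients add, i.e. $\hcap(A_2)=\hcap(A_1)+\hcap(B)$, which is exactly (d). In particular $\hcap$ is monotone under inclusion.

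For (b), take first $A_1,A_2$ disjoint (the case relevant here, where the two slits meet only at a boundary point). On $\Ha\setminus(A_1\cup A_2)$ both $u_{A_1}+u_{A_2}$ and $u_{A_1\cup A_2}$ are harmonic and vanish on $\R$ and at $\infty$; on $\partial A_1$ one has $u_{A_1}=\Im(z)=u_{A_1\cup A_2}$ while $u_{A_2}\ge 0$, so $u_{A_1}+u_{A_2}\ge u_{A_1\cup A_2}$ there, and symmetrically on $\partial A_2$. The maximum principle gives this inequality throughout $\Ha\setminus(A_1\cup A_2)$; evaluating at $iy$ and letting $y\to\infty$ yields $\hcap(A_1)+\hcap(A_2)\ge\hcap(A_1\cup A_2)$. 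Finally (c) follows by combining (d) and (b): for disjoint hulls (d) gives $\hcap(A_1\cup A_2)=\hcap(A_2)+\hcap(g_{A_2}(A_1))$, whence $\hcap(g_{A_2}(A_1))=\hcap(A_1\cup A_2)-\hcap(A_2)\le\hcap(A_1)$.

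The main obstacle is twofold. First, step one must be made precise: that $u_A$ really is the bounded harmonic function with the stated boundary behaviour, controlling it at $\infty$ and justifying that the comparison survives the scaling limit $y\to\infty$, together with the boundary regularity needed to apply the maximum principle on the unbounded domain. Second, the strictness of (b) and (c) when both hulls are nonempty: the gap $w:=u_{A_1}+u_{A_2}-u_{A_1\cup A_2}$ is a nonnegative harmonic function with strictly positive boundary values on $\partial(A_1\cup A_2)$ (since a nontrivial nonnegative harmonic function is strictly positive in the interior), and one must argue that its leading coefficient $\lim_{y\to\infty} y\,w(iy)$ is therefore strictly positive — most cleanly via the probabilistic representation, where $y\,\mathbb{P}^{iy}(\text{hitting the relevant boundary portion})$ has a positive limit. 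For overlapping hulls one additionally checks that $g_{A_1}(A_2\setminus A_1)$ is a genuine hull and adapts the comparison accordingly, but the disjoint case above is what the sequel requires.
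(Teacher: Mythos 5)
You should first be aware that the paper contains no proof of Lemma \ref{hcap} at all: it is quoted as a known result, with a citation to \cite{Lawler:2005}, pp.~69 and 71. So there is no in-paper argument to compare against; what you have effectively done is reconstruct the proofs from Lawler's book, and most of your reconstruction is correct and follows the standard route. Parts a) and d) are complete: uniqueness of the hydrodynamically normalized map gives $g_{cA}(z)=c\,g_A(z/c)$ and $g_{A_2}=g_B\circ g_{A_1}$ with $B=g_{A_1}(A_2\setminus A_1)$, and reading off the $1/z$-coefficients is exactly the standard argument. Your proof of the weak inequality in b), via $u_A(z)=\Im(z)-\Im(g_A(z))$, the identity $\hcap(A)=\lim_{y\to\infty}y\,u_A(iy)$ and a maximum-principle comparison, is the deterministic counterpart of the Brownian-motion proof of Proposition 3.42 in \cite{Lawler:2005} (note $u_A(z)=\operatorname{\bf E}^{z}[\Im(B_{\tau_A})]$); the paper itself invokes precisely that probabilistic identity later, in the proof of Lemma \ref{two_lines}. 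The boundary-regularity issues you flag are real but routine: one runs the comparison with $\limsup$'s, using that $\Im(g_A(z))\to 0$ as $z$ tends to any finite boundary point (properness of $g_A$), and that all the functions involved are $\LandauO(1/|z|)$ at infinity, so the maximum principle on the unbounded domain is legitimate. Your reduction of c) to b) and d) is also correct, and it transfers strictness: for disjoint hulls, $\hcap(A_1)-\hcap(g_{A_2}(A_1))=\hcap(A_1)+\hcap(A_2)-\hcap(A_1\cup A_2)$.

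The one genuine gap is the strictness assertion in b) (and, through the above identity, in c)), which you name as an obstacle but do not close --- and which the paper really uses: strict subadditivity enters the proof of Theorem \ref{thm:1}, and the strict inequality in c) is the crucial step in Theorem \ref{counter_H}. Two things must be supplied. First, that $w:=u_{A_1}+u_{A_2}-u_{A_1\cup A_2}$ is not identically zero when both hulls are nonempty: for instance, approach the topmost point $q$ of $A_1$ from above; there $u_{A_1}$ and $u_{A_1\cup A_2}$ both tend to $\Im(q)$, while $u_{A_2}\to u_{A_2}(q)>0$ (a nonempty hull has $u_{A_2}>0$ everywhere, since $u_{A_2}\equiv 0$ would force $g_{A_2}-\mathrm{id}$ to be a real-valued analytic function, hence $g_{A_2}=\mathrm{id}$ by the normalization). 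Second --- and this is where your remark that ``a nontrivial nonnegative harmonic function is strictly positive in the interior'' does not yet finish the job --- interior positivity of $w$ must be converted into positivity of the coefficient $\lim_{y\to\infty}y\,w(iy)$; this is an assertion about the rate of decay at infinity, not about interior values. It can be closed in any of three ways: pull $w$ back to $\Ha$ by $g_{A_1\cup A_2}^{-1}$ and apply the Herglotz--Poisson representation, so that the coefficient equals the total mass of the representing measure, which is positive unless $w\equiv 0$; or a barrier argument, showing $w(z)\ge c\,R^2\,\Im(z)/|z|^2$ outside a half-disk $\{|z|<R\}$ containing both hulls (boundary Harnack on the half-circle plus the maximum principle), whence $\lim_{y\to\infty}y\,w(iy)\ge cR^2>0$; or the probabilistic identity you allude to, which is how \cite{Lawler:2005} actually obtains strictness. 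With one of these inserted, your proof of the full statement is complete.
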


If $\gamma\colon[0,T]\to \overline{\Ha}$ is a simple curve, i.e. a continuous, one-to-one 
function with $\gamma(0)\in\R$ and $\gamma((0,T])\subset \Ha,$ then we call the hull 
$\Gamma:=\gamma((0,T])$ a \emph{slit}. If the function $t\mapsto b(t):=\hcap(\gamma((0,t]))$ is  
differentiable at $t_0$, then the family $g_t:=g_{\gamma(0,t]},$ $0\leq t \leq T,$ satisfies the 
following \emph{chordal Loewner equation} (see \cite{Lawler:2005}, Chapter 5):
\begin{equation}\label{ivp}
   \dot{g}_{t_0}(z)=\frac{\dot{b}(t_0)}{g_{t_0}(z)-U(t_0)},
\end{equation}

where $U(t_0)=g_{t_0}(\gamma(t_0))$.\\
$\gamma$ is called \emph{half-plane parametrization} of $\Gamma$ if 
$\hcap(\gamma(0,t])=t$ for all $t\in[0,T].$\footnote{Sometimes (e.g. in \cite{Lawler:2005}, p. 93), a parametrization $\gamma$ is called 
half-plane parametrization if $\hcap(\gamma(0,t])=2t$ for all $t\in[0,T].$ The reason is explained in
\cite{Lawler:2005}, p. 99.}\\

Furthermore, we will need the following definition:\\
Let $\varphi\in(0,\pi).$ We say that $\Gamma$ \emph{approaches $\R$ at
$x\in\R$ in $\varphi$-direction} if for every $\eps>0$ there is a $t_0>0$ such that
$\gamma(0,t_0]$ is contained in the set 
$\{z\in\Ha\; | \; \varphi-\eps<\arg(z-x)<\varphi+\eps\}.$\\

We will need the following lemma about half-plane capacities of straight line segments.
\begin{lemma} \label{two_lines}
Let $b_1, b_2>0$ and let $\Gamma_1, \Gamma_2$ be two line segments starting at $0$ 
with angles $\alpha_1, \alpha_2 \in(0,\pi)$,
$\alpha_1< \alpha_2,$ and $\hcap(\Gamma_1)=b_1, \hcap(\Gamma_2)=b_2.$ Then 
$$\hcap(\Gamma_1\cup \Gamma_2)\to b_1+b_2$$
as $(\alpha_1, \alpha_2)\to(0,\pi).$
\end{lemma}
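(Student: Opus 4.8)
The plan is to compute the half-plane capacity of the union of two line segments explicitly, or at least to bound it tightly, by exploiting the scaling and subadditivity properties in Lemma \ref{hcap}. Let me think about what happens geometrically as $(\alpha_1,\alpha_2)\to(0,\pi)$. When $\alpha_1\to 0$, the segment $\Gamma_1$ becomes nearly horizontal and lies close to the positive real axis; when $\alpha_2\to\pi$, the segment $\Gamma_2$ lies close to the negative real axis. Intuitively, in this limit the two segments ``separate'' in the sense that each contributes its full capacity $b_1$ and $b_2$, so the strict subadditivity in part b) of Lemma \ref{hcap} degenerates to equality in the limit.

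First I would reduce to fixing the segment lengths. By Lemma \ref{hcap}a), $\hcap$ scales like the square of length for a line segment at a fixed angle, so prescribing $\hcap(\Gamma_j)=b_j$ determines the length $\ell_j=\ell_j(\alpha_j)$ of each segment as a function of its angle. Explicit formulas for the half-plane capacity of a single line segment at angle $\alpha$ are classical (the map is essentially a power map composed with a Möbius transformation), so $\ell_j(\alpha_j)$ is an explicit continuous function. The key point is to understand how $\Gamma_1\cup\Gamma_2$ behaves as the angles go to the extremes, and here the natural tool is part d) of Lemma \ref{hcap}: write
\[
	\hcap(\Gamma_1\cup\Gamma_2)=\hcap(\Gamma_2)+\hcap\big(g_{\Gamma_2}(\Gamma_1)\big)=b_2+\hcap\big(g_{\Gamma_2}(\Gamma_1)\big),
\]
so that the whole problem reduces to showing $\hcap\big(g_{\Gamma_2}(\Gamma_1)\big)\to b_1$ as $(\alpha_1,\alpha_2)\to(0,\pi)$.

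The main obstacle, and the heart of the proof, is therefore controlling the image $g_{\Gamma_2}(\Gamma_1)$. As $\alpha_2\to\pi$, the hull $\Gamma_2$ retreats toward the negative real axis, and I expect the map $g_{\Gamma_2}$ to converge (locally uniformly away from $\Gamma_2$) to the identity on the region near the positive real axis where $\Gamma_1$ lives. Simultaneously, as $\alpha_1\to 0$, the segment $\Gamma_1$ itself collapses toward the positive real axis. I would make this precise by noting that the length $\ell_1(\alpha_1)$ stays bounded (one checks from the explicit single-segment capacity formula that $\ell_1$ remains comparable to $\sqrt{b_1}$ as $\alpha_1\to0$), so $\Gamma_1$ stays in a fixed compact subset of $\overline{\Ha}$ bounded away from $\Gamma_2$; then the Carathéodory-type convergence $g_{\Gamma_2}\to\mathrm{id}$ gives $g_{\Gamma_2}(\Gamma_1)\to\Gamma_1$ as hulls, and continuity of $\hcap$ under such convergence yields $\hcap(g_{\Gamma_2}(\Gamma_1))\to\hcap(\Gamma_1)=b_1$.

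To finish, I would assemble the pieces: combining the display above with the limit $\hcap(g_{\Gamma_2}(\Gamma_1))\to b_1$ gives $\hcap(\Gamma_1\cup\Gamma_2)\to b_1+b_2$. By symmetry (swapping the roles of the two segments, using part d) the other way) one obtains a matching lower estimate, so the limit is genuinely $b_1+b_2$ and not merely an upper bound; alternatively, strict subadditivity from part b) already furnishes the upper bound $\hcap(\Gamma_1\cup\Gamma_2)<b_1+b_2$, and the convergence argument supplies the lower bound. The delicate step I expect to spend the most care on is justifying the continuity of $\hcap$ under the degenerating convergence, since both hulls approach the real axis and one must ensure no capacity is lost in the limit; quantitatively this can be done either via the explicit conformal maps for the power-map geometry of a single segment or via the integral representation of $\hcap$ in terms of harmonic measure.
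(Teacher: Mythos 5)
Your reduction via Lemma \ref{hcap}~d), namely $\hcap(\Gamma_1\cup\Gamma_2)=b_2+\hcap\big(g_{\Gamma_2}(\Gamma_1)\big)$, is legitimate (and equivalent to what must be proved), but the mechanism you propose for the key lower bound rests on a false geometric premise. The length of a segment at angle $\alpha_1$ with prescribed capacity $b_1$ does \emph{not} stay comparable to $\sqrt{b_1}$ as $\alpha_1\to 0$: by the explicit tip formula (Example 3.39 in \cite{Lawler:2005}, equation \eqref{tip} in the paper), the tip of $\Gamma_1$ is $\sqrt{2b_1}\,(\alpha_1/\pi)^{\alpha_1/\pi-1/2}(1-\alpha_1/\pi)^{1/2-\alpha_1/\pi}e^{i\alpha_1}$, whose modulus blows up like $(\alpha_1/\pi)^{-1/2}$. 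This is forced by scaling: a nearly horizontal segment of bounded length has capacity tending to $0$, so keeping $\hcap(\Gamma_1)=b_1$ requires the length to diverge. Hence $\Gamma_1$ does not stay in a fixed compact subset of $\overline{\Ha}$, and in any case it is never ``bounded away from $\Gamma_2$,'' since both segments emanate from the same point $0$.

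Because of this, the compactness-plus-Carath\'eodory argument collapses at exactly the step you flag as delicate, and it cannot be repaired by additional care alone: in the regime at hand the hulls flatten onto $\R$ while their diameters diverge, and half-plane capacity is genuinely \emph{discontinuous} under that kind of convergence. The slits themselves exhibit the discontinuity: as $\alpha_1\to0$, $\Gamma_1$ converges locally (in any kernel/Carath\'eodory sense) to a subset of $\R$, whose capacity is $0$, while $\hcap(\Gamma_1)\equiv b_1$. So the chain ``$g_{\Gamma_2}\to\operatorname{id}$, hence the image hulls converge, hence their capacities converge'' is not available. Note moreover that the quantity you must bound, $b_1-\hcap\big(g_{\Gamma_2}(\Gamma_1)\big)=\hcap(\Gamma_1)+\hcap(\Gamma_2)-\hcap(\Gamma_1\cup\Gamma_2)$, is precisely the capacity deficit the lemma is about, so this step carries the entire content of the statement and needs a genuine quantitative estimate. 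The paper obtains one probabilistically, via the representation $\hcap(A)=\lim_{y\to\infty}y\operatorname{\bf E}^{yi}[\Im(B_{\tau_A})]$: the deficit becomes a sum of cross terms (the Brownian motion hits one slit and then the other), and these are small because the heights of both slits tend to $0$ --- the height of the second slit bounds the payoff $\Im(B_\tau)$, while Beurling's estimate bounds the probability of travelling from one flat slit to the other without touching $\R$. Your closing remark about an integral representation in terms of harmonic measure points in exactly this direction, but that estimate is the proof one actually has to carry out, not a finishing touch on the argument you gave.
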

\begin{proof}
  Let $\gamma_j:[0,b_j]\to \Gamma_j$ be the half-plane parametrization of $\Gamma_j$,
  i.e. $\hcap(\gamma_j(0,t])=t.$\\
	
	We will use a formula which translates the half--plane capacity of an arbitrary hull $A$ 
	into an expected value of a random variable derived from a Brownian motion hitting this hull.
	Let $B_s$ be a Brownian motion started in $z\in \Ha\setminus A.$ We write 
	$\operatorname{\bf P}^{z}$ and $\operatorname{\bf E}^{z}$ for probabilities and 
	expectations derived from $B_s.$ Let $\tau_A$ be the smallest time $s$ with 
	$B_s\in \R\cup A.$ Then formula (3.6) of Proposition 3.41 in \cite{Lawler:2005} tells us
	\[
		\hcap(A)=\lim_{y\to\infty}y\operatorname{\bf E}^{yi}[\operatorname{Im}(B_{\tau_A})].
	\]
	Let $\varrho=\tau_{\Gamma_1}$ and $\sigma=\tau_{\Gamma_2}.$ Then we have 
	(compare with the proof of Proposition 3.42 in \cite{Lawler:2005})
	\begin{eqnarray*} \hcap(\Gamma_1)+\hcap(\Gamma_2)-\hcap(\Gamma_1\cup \Gamma_2)=\\\lim_{y\to \infty}y 
		\left(\operatorname{\bf E}^{yi}[\operatorname{Im}(B_{\sigma});\sigma>\varrho]+ 
		\operatorname{\bf E}^{yi}[\operatorname{Im}(B_{\varrho});\sigma<\varrho]\right).
	\end{eqnarray*}
Here we use the notation $\operatorname{\bf E}^{z}[X;A] := \operatorname{\bf E}^{z}[X {\bold 1}_A]$,
 where $X$ is a random variable and ${\bold 1}_A$ is the indicator function of the event $A$.\\
In the following we will estimate the term $\operatorname{\bf E}^{yi}[
(B_{\sigma});\sigma>\varrho]$, assuming that $y$ is so large that $yi$ is not contained
in the union of the two slits.\\
 
First we note that $\gamma_j(1)$ and $\Im(\gamma_j(1))$ can be computed explicitly; 
see Example 3.39 in \cite{Lawler:2005}:
\begin{equation}\label{tip}
	\gamma_j(1)=\sqrt{2}\cdot (\sqrt{\alpha_j/\pi})^{2\alpha_j/\pi-1}
	\cdot (\sqrt{1-\alpha_j/\pi})^{1-2\alpha_j/\pi} e^{i\alpha_j} \cdot \sqrt{b_j}
\end{equation}
and consequently
\begin{equation*}
	\Im(\gamma_1(1))=\sin(\alpha_j)\cdot \sqrt{2}\cdot (\sqrt{\alpha_j/\pi})^{2\alpha_j/\pi-1}
	\cdot (\sqrt{1-\alpha_j/\pi})^{1-2\alpha_j/\pi}\cdot \sqrt{b_j}.
\end{equation*}

Note that $\Im(\gamma_j(1))\to 0$ and $|\gamma_j(1)|\to \infty$ as $\alpha_j\to0$ or
$\alpha_j\to\pi$. \\
Let $R>0$ and assume that $\alpha_1$ is so close to $0$ that $\Im(\gamma_1(1))<R$ and
\begin{equation}\label{(*)}\tag{$*$}
 |\gamma_1(1)|>R 
\end{equation}
 and write
\[
	\operatorname{\bf E}^{yi}[\operatorname{Im}(B_{\sigma});\sigma>\varrho] = 
	\operatorname{\bf E}^{yi}[\operatorname{Im}(B_{\sigma});\sigma>\varrho\wedge |B_\varrho|<R] 
	+\operatorname{\bf E}^{yi}[\operatorname{Im}(B_{\sigma});\sigma>\varrho\wedge 
	|B_\varrho|\geq R].
\]

{\bf The first summand:} We have $\operatorname{\bf E}^{yi}[\operatorname{Im}(B_{\sigma});
\sigma>\varrho\wedge |B_\varrho|<R] \leq \Im(\gamma_2(1))\cdot 
\operatorname{P}\{B_\varrho\in \Gamma_1\cap\{|z|<R\}\}.$
Now we use that the limit $\lim_{y\to\infty} y\operatorname{\bf P}\{B_\varrho\in 
\Gamma_1\cap\{|z|<R\}\}$ exists; see \cite{Lawler:2005}, p. 74; and that there exists a 
universal constant $c_2$ such that 
\[
	\lim_{y\to\infty} y\operatorname{\bf P}\{B_\varrho\in \Gamma_1\cap\{|z|<R\}\} 
	\leq c_2 \diam(\Gamma_1\cap\{|z|<R\})=c_2\cdot R;
\]	
see \cite{Lawler:2005}, p. 74.
Thus we get
\[	
	\lim_{y\to\infty} y\operatorname{\bf E}^{yi}[\operatorname{Im}(B_{\sigma});
	\sigma>\varrho\wedge |B_\varrho|<R] \leq c_2 R \cdot  \Im(\gamma_2(1))\to 0 
	\quad \text{as} \quad (\alpha_1,\alpha_2)\to(0,\pi).
\]

{\bf The second summand:} First we have 
$\operatorname{\bf E}^{yi}[\operatorname{Im}(B_{\sigma});\sigma>\varrho\wedge  
|B_\varrho|\geq R] \leq \Im(\gamma_2(1))\cdot \operatorname{\bf P}^{yi}\{B_\sigma 
\in \Gamma_2;\sigma>\varrho\wedge  |B_\varrho|\geq R\}.$

A Brownian motion satisfying $\sigma>\varrho\wedge  |B_\varrho|\geq R$ will hit $\Gamma_1$ 
at a point $Q$ with $|Q|\geq R$ and afterward it has to hit $\Gamma_2$ without hitting the 
real axis. Call the probability of this event $p_Q.$\\
From \eqref{(*)} it follows that the Brownian motion hitting $Q$ has to leave 
the half-disk $\{z\in\Ha\cup\R \;|\; |z-\Re(Q)|<R\}$ without hitting the real axis; 
see Figure \ref{Brownian_motion}. From Beurling's estimate (Theorem 3.76 in \cite{Lawler:2005}) it follows that 
$p_Q \leq c_1\cdot \Im (Q) \leq c_1 \cdot \Im(\gamma_1(1)).$
\footnote{Note that Theorem 3.76 in \cite{Lawler:2005} gives an estimate on the probability
that a Brownian motion started in $\D$ will not have hit a fixed curve, say $[0,1]$,
when leaving $\D$ for the first time. The estimate we use can be simply recovered by 
mapping the half-circle $\D\cap \Ha$ conformally onto $\D\setminus [0,1]$
by $z\mapsto z^2.$}
So we get 
\[
	\operatorname{\bf P}^{yi}\{B_{\sigma}\in \Gamma_2;\sigma>\varrho\wedge |B_\sigma|\geq R\} 
	\leq  \operatorname{\bf P}^{yi}\{B_{\sigma}\in \Gamma_2\} \cdot c_1\cdot \Im(\gamma_1(1)). 
\]
Again we have $ \lim_{y\to\infty} y\operatorname{\bf P}^{yi}\{B_\sigma\in \Gamma_2\}\leq c_2 
\diam(\Gamma_2)=c_2\cdot |\gamma_2(1)|.$\\

Thus, using \eqref{tip}, we have
\begin{eqnarray*}
	\lim_{y\to\infty} y\operatorname{\bf E}^{yi}[\operatorname{Im}(B_{\sigma});
	\sigma>\varrho\wedge  |B_\varrho|\geq R] \leq \Im(\gamma_2(1))\cdot c_2\cdot |\gamma_2(1)|
	\cdot c_1\cdot \Im(\gamma_1(1))=\\
	c_1c_2 \Im(\gamma_1(1)) \sin(\alpha_2)\cdot |\gamma_2(1)|^2 =2c_1c_2b_1\cdot
	\Im(\gamma_1(1)) \cdot \sin(\alpha_2)
	\cdot   (1-\alpha_2/\pi)^{1-2\alpha_2/\pi}\cdot (\alpha_2/\pi)^{2\alpha_2/\pi-1}.
\end{eqnarray*}

Note that 
\[
	\Im(\gamma_1(1))\to 0, \quad \sin(\alpha_2) \cdot (1-\alpha_2/\pi)^{1-2\alpha_2/\pi}\to \pi, \quad 
	(\alpha_2/\pi)^{2\alpha_2/\pi-1}\to 1  
\]	
and consequently $\lim_{y\to\infty} y\operatorname{\bf E}^{yi}[\operatorname{Im}(B_{\sigma});
	\sigma>\varrho\wedge  |B_\varrho|\geq R] \to 0$
	as $(\alpha_1,\alpha_2)\to (0,\pi)$.\\

In the same way we obtain $\lim_{y\to \infty}y \operatorname{\bf E}^{yi}[\operatorname{Im}
(B_{\varrho});\sigma<\varrho]\to 0$ as $(\alpha_1,\alpha_2)\to (0,\pi)$ and thus
\[
	\hcap(\Gamma_1\cup \Gamma_2)\to \hcap(\Gamma_1)+\hcap(\Gamma_2) \quad \text{as} \quad (\alpha_1,\alpha_2)\to (0,\pi).
\]
\end{proof}
\begin{figure}[h] \label{Brownian_motion}
	\centering
	\includegraphics[width=130mm]{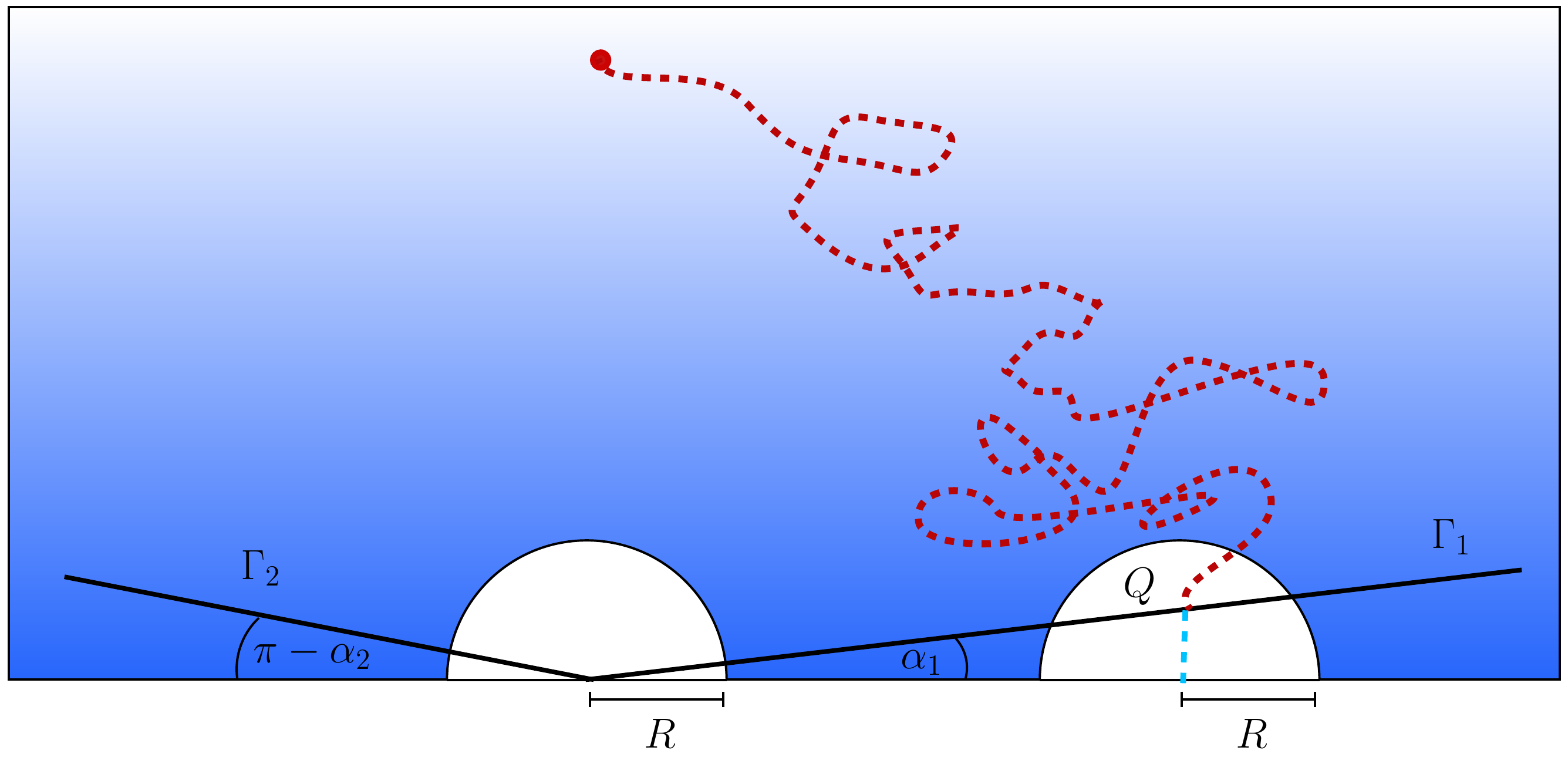}
	\caption{A Brownian motion with $\sigma>\varrho$ and $|B_{\varrho}|\geq R$.}
\end{figure}

Let $\Gamma_1, \Gamma_2$ be two slits with parametrizations $\gamma_1$ and $\gamma_2.$
Furthermore, we let $h_1(t):=\hcap(\gamma_1(0,t])$, $h_2(t):=\hcap(\gamma_2(0,t])$ and
 $c(t):=\hcap(\gamma_1(0,t]\cup\gamma_2(0,t]).$\\
 
\begin{theorem}\label{thm:1}
	Let $b_1,b_2\geq 0$ and let $\Gamma_1, \Gamma_2$ be two slits with $\overline{\Gamma_1} 
	\cap \overline{\Gamma_2}=\{p\}\subset\R,$ such that $\Gamma_j$ approaches
	$p$ in $\alpha_j$-direction for $j=1,2,$ with $0<\alpha_1\leq \alpha_2<\pi.$ 
	Assume that $h_1(t)$ and $h_2(t)$ are differentiable for $t=0$ with $b_1=\dot{h}_1(0),$ 
	$b_2=\dot{h}_2(0).$ Then $c(t)$ is 
	differentiable at $t=0.$
	\begin{itemize}
	 \item[(i)] If $b_1=0$ or $b_2=0,$ then  $\dot{c}(0)=\max\{b_1, b_2\}.$
	\end{itemize}
	If $b_1, b_2>0,$ then
	\begin{itemize}
	 \item[(ii)] $\max\{b_1, b_2\}\leq \dot{c}(0) < b_1+ b_2,$
	 \item[(iii)] $\dot{c}(0)= \max\{b_1, b_2\} \quad \text{if and only if} \quad\alpha_1=\alpha_2  \quad \text{and}$
	 \item[(iv)] $\dot{c}(0)\to b_1+ b_2 \quad \text{as}\quad (\alpha_1,\alpha_2)\to (0,\pi).$
	 \end{itemize}

\end{theorem}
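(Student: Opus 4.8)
The plan is to reduce the behaviour of $c(t)=\hcap(\gamma_1(0,t]\cup\gamma_2(0,t])$ near $t=0$ to the half-plane capacity of two \emph{straight} line segments, and then to read off (i)--(iv) from Lemma \ref{hcap} and Lemma \ref{two_lines}. After a translation we may assume $p=0$. Writing $K_j(t):=\gamma_j(0,t]$ and $K(t):=K_1(t)\cup K_2(t)$, the first, soft step records the two-sided bound
\[
	\max\{h_1(t),h_2(t)\}\le c(t)\le h_1(t)+h_2(t),
\]
the left inequality coming from monotonicity of $\hcap$ (Lemma \ref{hcap} d)) and the right one from subadditivity (Lemma \ref{hcap} b)). Dividing by $t>0$ and using $\dot h_j(0)=b_j$ gives $\max\{b_1,b_2\}\le\liminf_{t\to0}c(t)/t\le\limsup_{t\to0}c(t)/t\le b_1+b_2$. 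In the degenerate case $b_1=0$ or $b_2=0$ the two outer terms coincide with $\max\{b_1,b_2\}$, so $c$ is differentiable at $0$ with $\dot c(0)=\max\{b_1,b_2\}$; this already proves (i).

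For the remaining statements I assume $b_1,b_2>0$ and rescale. Since $\hcap$ is $2$-homogeneous (Lemma \ref{hcap} a)), the hull $\tilde K(t):=t^{-1/2}K(t)$ satisfies $\hcap(\tilde K(t))=c(t)/t$, while its pieces $\tilde K_j(t):=t^{-1/2}K_j(t)$ satisfy $\hcap(\tilde K_j(t))=h_j(t)/t\to b_j$. Because $\Gamma_j$ approaches $\R$ at $0$ in $\alpha_j$-direction and cones centred at $0$ are invariant under $z\mapsto t^{-1/2}z$, the rescaled piece $\tilde K_j(t)$ lies, for $t$ small, in an arbitrarily thin cone about the ray $\{re^{i\alpha_j}:r\ge0\}$. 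The heart of the proof is to show that, as $t\to0$,
\[
	\tilde K(t)\longrightarrow S_1\cup S_2
\]
in the Carath\'eodory (kernel) sense, where $S_j:=\{re^{i\alpha_j}:0\le r\le\ell_j\}$ is the segment on that ray normalised by $\hcap(S_j)=b_j$. Granting this, continuity of $\hcap$ under kernel convergence --- valid because $\hcap$ is the coefficient of $1/z$ in the hydrodynamically normalised map and $g_{\tilde K(t)}\to g_{S_1\cup S_2}$ locally uniformly near $\infty$ --- yields $c(t)/t\to\hcap(S_1\cup S_2)$. Hence $c$ is differentiable at $0$ with $\dot c(0)=\hcap(S_1\cup S_2)$.

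Once this identification is in hand, (ii)--(iv) are formal. The bounds in (ii) follow from $S_j\subset S_1\cup S_2$ together with monotonicity, and from the \emph{strict} subadditivity of Lemma \ref{hcap} b) (both $S_j$ nonempty), giving $\max\{b_1,b_2\}\le\dot c(0)<b_1+b_2$. For (iii): if $\alpha_1=\alpha_2$ the two segments lie on a common ray, the shorter is contained in the longer, so $S_1\cup S_2$ equals the longer segment and $\dot c(0)=\max\{b_1,b_2\}$; conversely, if $\alpha_1\ne\alpha_2$ then (say $b_1\ge b_2$) $S_2\setminus S_1\ne\emptyset$, so strict monotonicity of $\hcap$ (Lemma \ref{hcap} d), the added hull being nonempty and hence of positive capacity) forces $\dot c(0)>\hcap(S_1)=\max\{b_1,b_2\}$. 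Finally (iv) is immediate from Lemma \ref{two_lines} applied to $S_1,S_2$, since $\hcap(S_j)=b_j$ and $\dot c(0)=\hcap(S_1\cup S_2)\to b_1+b_2$ as $(\alpha_1,\alpha_2)\to(0,\pi)$.

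The main obstacle is the kernel convergence $\tilde K(t)\to S_1\cup S_2$, where two points need care. First, a uniform bound on the rescaled hulls: since the $\tilde K_j(t)$ lie in cones whose angles stay bounded away from $0$ and $\pi$, one has $\hcap(\tilde K_j(t))\asymp(\diam\tilde K_j(t))^2$, so the bounded capacities force bounded diameters. Second, the identification of every subsequential kernel limit: such a limit is a connected hull contained in the ray $\{re^{i\alpha_j}:r\ge0\}$ (the cones shrink to the ray), containing $0$ (the base point $\gamma_j(0)=0$) and of capacity $b_j$ by continuity, hence exactly the segment $S_j$. Uniqueness of this limit then upgrades subsequential convergence to convergence.
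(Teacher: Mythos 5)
Your proposal is correct and follows essentially the same route as the paper: rescale by $\sqrt{t}$, identify the Carath\'eodory kernel limit of the rescaled hulls as the union of two line segments $S_1\cup S_2$ with $\hcap(S_j)=b_j$, conclude $\dot c(0)=\hcap(S_1\cup S_2)$ by continuity of $\hcap$ under kernel convergence, and then read off the statements from Lemma \ref{hcap} and Lemma \ref{two_lines}. The only differences are minor: the paper first treats the case where the $\Gamma_j$ are genuine line segments (citing Lemma 4.10 of \cite{LindMR:2010}) before passing to the general case, and it derives (i) from the segment picture rather than from your softer sandwich bound $\max\{h_1,h_2\}\le c\le h_1+h_2$, whereas you supply more detail (diameter bounds, subsequential-limit identification) for the kernel-convergence step that the paper essentially asserts.
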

\begin{proof}
	By translation we can assume that $p=0.$\\
	For $t>0,$ we define $G_t=(\gamma_1(0,t]\cup\gamma_2(0,t])/\sqrt{t}.$ By Lemma \ref{hcap} a) we have
	\[
	c(t)/t=\hcap(\gamma_1[0,t]/\sqrt{t}\cup \gamma_1[0,t]/\sqrt{t})=\hcap(G_t).
	\]

	First, we assume that $\Gamma_1$ and $\Gamma_2$ are straight line segments.  
	Since $\hcap(\gamma_j[0,t]/\sqrt{t})=h_j(t)/t\to \dot{h}_j(0)$ as $t\to 0$ for $j=1,2$, we conclude that the tip 
	of the line segment $\gamma_j[0,t]/\sqrt{t}$ converges to the tip of the line segment $L_j$ with the same angle and half-plane capacity 
	$\dot{h}_j(0)=b_j=\hcap(L_j)$. \\
	
	From \cite{LindMR:2010}, Lemma 4.10, it follows that
	$\hcap(G_t)\to \hcap(L_1\cup L_2)$ as $t\to 0.$ Consequently, $c(t)$ is differentiable at $t=0$ with 
	$\dot{c}(0)=\hcap(L_1\cup L_2).$\\
	If $\hcap(L_1)=0$ or $\hcap(L_2)=0,$ then $\hcap(L_1\cup L_2)=\max\{\hcap(L_1), \hcap(L_2)\}.$ 
	This proves (i).\\
	If, on the other hand, $\hcap(L_1),\hcap(L_2)>0,$ then Lemma \ref{hcap} b) gives 
	$$\max\{\hcap(L_1), \hcap(L_2)\}\leq \hcap(L_1\cup L_2)<\hcap(L_1)+\hcap(L_2),$$ hence 
	$\max\{b_1, b_2\}\leq \dot{c}(0) < b_1+b_2.$\\
	We have $\dot{c}(0)=b_j$ if and only if $\hcap(L_j)=\hcap(L_1\cup L_2)$,
	i.e. $L_j=L_1\cup L_2$ which is equivalent to $\alpha_1 = \alpha_2$ and
	$\hcap(L_j)\geq \hcap(L_{3-j})$.\\

	Since $\hcap(L_1\cup L_2)\to \hcap(L_1)+\hcap(L_2) \quad
	\text{as}\quad (\alpha_1,\alpha_2)\to (0,\pi)$ by Lemma \ref{two_lines}, we get
	$\dot{c}(0)\to b_1+ b_2 \quad \text{as} \quad (\alpha_1,\alpha_2)\to (0,\pi).$ Thus, we have shown
	all statements of the theorem for the case of two line segments.\\

Now we pass on to the general case.\\

For $j=1,2$ let $L_j$ be the straight line segment starting at $0$ with angle $\alpha_j$ and 
$\hcap(L_j)=b_j.$ \\
Since $\Gamma_j$ approaches $0$ in $\alpha_j$-direction, we have 
$\Ha \setminus (\gamma_j[0,t]/\sqrt{t}) \to \Ha \setminus L_j$ as $t\to 0$ in the sense of kernel convergence w.r.t. the point
$\infty.$\footnote{Here, $\infty$ is a boundary point of $\Ha$ on the Riemann sphere. However, in our case, 
kernel convergence in $\Ha$ w.r.t. $\infty$ can be defined by  extending the conformal 
mapping $g_A$ analytically to $\C \setminus \overline{A\cup A^*}$, where $A^*$ stands for the reflection of $A$ w.r.t. the real axis.}

From this it follows that $\Ha \setminus G_t \to \Ha \setminus (L_1\cup L_2)$ as $t\to 0$ and, by the definition
of $\hcap$ [see \eqref{def_hcap}] and the Carath\'{e}odory Kernel Convergence Theorem, we obtain 
\[
\hcap(G_t)\to \hcap(L_1\cup L_2)\quad \text{as} \quad t\to 0.
\]
Hence $c(t)$ is differentiable at $t=0$ with $\dot{c}(0)=\hcap(L_1\cup L_2)$.\\
Thus, by using the case of two line segments, we immediately get the statements (i), (ii), (iii) and (iv).
\end{proof}

\begin{theorem}\label{counter_H}
 There exist two slits $\Gamma_1, \Gamma_2,$  with $\overline{\Gamma_1} \cap 
 \overline{\Gamma_2}=\{0\},$ such that $h_j(t)=t$ for all $t\in[0,\hcap(\Gamma_j)]$,
 but $c(t)$ is not differentiable at $t=0.$
\end{theorem}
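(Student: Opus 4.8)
The plan is to exploit the scaling identity already used in the proof of Theorem~\ref{thm:1}. If $\gamma_1,\gamma_2$ are half-plane parametrized (so that $h_1(t)=h_2(t)=t$) and meet only at $0$, then by Lemma~\ref{hcap}~a)
\[
	c(t)/t = \hcap(G_t), \qquad G_t := \big(\gamma_1(0,t]\cup\gamma_2(0,t]\big)/\sqrt{t},
\]
and moreover each rescaled single slit $\gamma_j(0,t]/\sqrt{t}$ has half-plane capacity $1$ for every $t$. Thus $c$ is differentiable at $0$ if and only if $\hcap(G_t)$ has a limit as $t\to0$, and it suffices to construct two admissible slits for which $\hcap(G_t)$ has two distinct subsequential limits as $t\to0$. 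The whole point is therefore to produce slits whose \emph{direction of approach to $0$ oscillates}, so that the hypothesis ``$\Gamma_j$ approaches $p$ in $\alpha_j$-direction'' of Theorem~\ref{thm:1} fails.

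The mechanism producing the two limits is exactly the kernel-convergence argument from the general case of Theorem~\ref{thm:1}: whenever the rescaled slits converge, in the sense of Carath\'eodory kernel convergence w.r.t.\ $\infty$, to straight segments, the half-plane capacity converges to that of the limiting segment configuration. For $\alpha\in(0,\pi/2)$ let $L(\alpha)$ be the segment from $0$ at angle $\alpha$ with $\hcap(L(\alpha))=1$, and set $V(\alpha):=\hcap\big(L(\alpha)\cup L(\pi-\alpha)\big)$. By Theorem~\ref{thm:1}~(iii) we have $V(\alpha)\to1$ as $\alpha\to\pi/2$, while by Lemma~\ref{two_lines} we have $V(\alpha)\to2$ as $\alpha\to0$; in particular $V$ is non-constant, so I may fix two angles $\alpha^A$ close to $\pi/2$ and $\alpha^B$ close to $0$ with $V(\alpha^A)\neq V(\alpha^B)$.

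The construction is as follows. I build $\Gamma_1$ as a simple polygonal arc confined to the sector $\{\arg z\in[\alpha^B,\alpha^A]\}\subset(0,\pi/2)$, consisting of straight segments whose directions alternate between $\alpha^A$ and $\alpha^B$, placed at radial scales $r_n\to0$ that are \emph{super-geometrically separated}, i.e.\ $r_{n+1}/r_n\to0$. Since all segment directions lie in $(0,\pi/2)$, the orthogonal projection onto the line $\R e^{i\pi/4}$ is strictly increasing along the arc, so $\Gamma_1$ is indeed simple. I then set $\Gamma_2:=-\overline{\Gamma_1}$, the reflection of $\Gamma_1$ across the imaginary axis; it lies in the mirror sector $\{\arg z\in(\pi/2,\pi)\}$, whence $\overline{\Gamma_1}\cap\overline{\Gamma_2}=\{0\}$. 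Finally I reparametrize each slit by half-plane capacity. Because the map $z\mapsto-\bar z$ preserves $\Ha$ and fixes $\infty$, it preserves $\hcap$, so with the common parameter one has $\gamma_2(0,t]=-\overline{\gamma_1(0,t]}$; consequently $h_1(t)=h_2(t)=t$ and $G_t$ is symmetric about the imaginary axis for every $t$. For each $n$ pick $t_n$ in the capacity range corresponding to the $n$-th segment. The inner part of $\gamma_1(0,t_n]$ has diameter $\lesssim r_{n+1}=o(r_n)=o(\sqrt{t_n})$, so after rescaling by $1/\sqrt{t_n}$ it shrinks to $0$, while the dominant segment, offset from the origin by $o(1)$, rescales to $L(\alpha^A)$ or $L(\alpha^B)$ according to its direction; by symmetry $\gamma_2(0,t_n]/\sqrt{t_n}$ rescales to $L(\pi-\alpha^A)$ or $L(\pi-\alpha^B)$. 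Hence $\Ha\setminus G_{t_n}$ converges in the kernel sense to the complement of the corresponding segment configuration, and, exactly as in the general case of Theorem~\ref{thm:1}, the Carath\'eodory Kernel Convergence Theorem together with the definition of $\hcap$ yields $\hcap(G_{t_n})\to V(\alpha^A)$ along the $\alpha^A$-scales and $\hcap(G_{t_n})\to V(\alpha^B)$ along the $\alpha^B$-scales. As $V(\alpha^A)\neq V(\alpha^B)$, the ratio $c(t)/t$ has no limit, so $c$ is not differentiable at $t=0$.

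The main obstacle is making the kernel convergence rigorous: one must verify that the sub-scale inner zigzag and the offset of the active segment from the origin really become negligible after rescaling, uniformly enough to apply the Carath\'eodory Kernel Convergence Theorem. This is precisely where the super-geometric separation $r_{n+1}/r_n\to0$ and the scaling relation $\hcap(A)\asymp(\diam A)^2$ (a quantitative companion of Lemma~\ref{hcap}~a)) do the work, by pinning the capacity scale $t_n$ to the geometric scale $r_n^2$ and forcing the sub-scale data to vanish in the limit. Everything else---simplicity and disjointness of the slits, the half-plane reparametrization, and the distinctness of the two limiting capacities---is then routine.
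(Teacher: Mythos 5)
Your proposal is correct in strategy, but it takes a genuinely different route from the paper. The paper also reflects one slit across the imaginary axis, but its first slit is chosen \emph{self-similar} ($\tfrac12\Gamma\subset\Gamma$, a zigzag built from a fixed polygonal arc $A$ at scales $2^{-n}$): self-similarity forces $c(t)$ to be linear if it is differentiable at $0$, and linearity is then refuted by the strict capacity inequalities of Lemma \ref{hcap} b)--d), which give an increment ratio $\frac{c(t_2)-c(t_1)}{t_2-t_1}<1$ (stable under a small perturbation $\eps>0$ that keeps the two slits disjoint), while the global comparison $\hcap(\Gamma)<c(T)$ forces $\dot c(0)>1$ --- a contradiction. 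This is elementary and needs no compactness or blow-up analysis. Your argument instead builds a slit whose approach direction oscillates between two angles at super-geometrically separated scales and extracts two distinct subsequential limits of $c(t)/t$ via the same kernel-convergence mechanism the paper uses in the general case of Theorem \ref{thm:1}; it reuses more machinery (Lemma \ref{two_lines}, Carath\'eodory convergence of $\hcap$) but makes the \emph{reason} for failure transparent --- the blow-ups converge to two segment configurations with different capacities --- and yields the slightly stronger conclusion that $c(t)/t$ oscillates between two explicit values $V(\alpha^A)\ne V(\alpha^B)$.

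Three imprecisions in your write-up should be repaired, though none is fatal. First, $\hcap(A)\asymp(\diam A)^2$ is \emph{false} for general hulls (a nearly flat hull hugging $\R$ has $\hcap$ far smaller than $\diam^2$); you get the two-sided bound only because your hulls are confined to the sector $\{\alpha^B\le\arg z\le\alpha^A\}$ with $\alpha^B>0$ fixed, so their height is comparable to their diameter --- say this explicitly, since it is exactly what pins $t_n\asymp r_n^2$. Second, ``$V(\alpha)\to1$ as $\alpha\to\pi/2$ by Theorem \ref{thm:1}(iii)'' is not what (iii) asserts (it is a fixed-angle equality criterion, not a continuity statement); but you do not need it: non-constancy of $V$ already follows from $V(\alpha)<2$ for every $\alpha$ (strict subadditivity, Lemma \ref{hcap} b)) together with $V(\alpha)\to2$ as $\alpha\to0$ (Lemma \ref{two_lines}), so fix any $\alpha^A$ and then choose $\alpha^B$ small. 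Third, ``pick $t_n$ in the capacity range corresponding to the $n$-th segment'' must be sharpened to the \emph{outer end} of that range (e.g.\ $t_n=\hcap$ of the hull up to the outer endpoint $p_n$), since for $t_n$ near the inner end one has $\sqrt{t_n}\asymp r_{n+1}$ and the rescaled zigzag is no longer negligible, destroying the claimed kernel limit.
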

 \begin{proof}
  Assume that $\Gamma$ is a slit starting at $0$ with half-plane parametrization $\gamma\colon(0,T]\to\C$ having the property
  $\Gamma\subset \{z\in\Ha \,|\, \Re(z)>0\}$ and assume further that
  $\Gamma$ is self-similar in the following sense:
 $$ 1/2\cdot \Gamma \subset \Gamma. $$
 Lemma \ref{hcap} a) implies that $\gamma(0,1/4^n \cdot T]=1/2^n \cdot \Gamma$ 
 for every $n\in \N.$\\
 Now let $\Gamma^*$ be the reflection of $\Gamma$ with
 respect to the imaginary axis, i.e. $\Gamma^*:=\{-\overline{z} \,|\, z\in \Gamma\}.$
 Denote by $\gamma^*$ the half-plane parametrization of $\Gamma^*$ and let
 $K_t=\gamma(0,t]\cup\gamma^*(0,t].$\\ Then also $K_1$ is self-similar,
 i.e. $1/2 \cdot K_t\subset K_t$ and thus for any $t\in[0,T]$ the half-plane capacity 
 $c(t):=\hcap(K_t)$ of the hull $K_t$ satisfies $c(t/4)=c(t)/4$ and consequently
 $$\frac{c(t/4^n)}{t/{4^n}}=\frac{c(t)}{t}$$
 for every $n\in\N.$ Hence, if we assume that $c(t)$ is differentiable at $t=0,$ 
 then $c(t)$ is linear with $c(t)=\dot{c}(0)\cdot t.$\\
 Below we construct such a self-similar slit $\Gamma$ 
 having the property that $c(t)$ is not linear, which gives us the desired contradiction.\\

Let $0\leq\eps<1/2$ and let $A$ be the curve that connects the points $3/4i+\eps/2$, $i+\eps,$ $1/2+i$, $1/2+3/2i$ and $3/2i+\eps$ by straight line segments. Note that $A$ and $1/2\cdot A$ intersect only at $3/4i+\eps/2.$ \\
Now we define the slit $$\Gamma:=\bigcup_{n=0}^\infty 1/2^n\cdot A. $$
Of course, this slit is self-similar, i.e.
$$1/2\cdot \Gamma \subset \Gamma.$$
Let $\Gamma^*$ be the reflection of $\Gamma$ w.r.t. the imaginary axis. 
Now let $\gamma, \gamma^*\colon(0,T]\to \C$  be the parametrizations of
$\Gamma$ and $\Gamma^*$ by half-plane capacity.\\ 
For each $t\in (0,T]$ we can define $K_t$ as the smallest hull containing 
$\gamma(0,t]\cup \gamma^*(0,t].$ Note that $K_t=\gamma(0,t]\cup \gamma^*(0,t]$
for $\eps>0.$ Only for $\eps=0$, the complement of the union has bounded components.
Let $c(t):=\hcap(K_t)$ and let $t_2$ and $t_1$ be defined by 
$\gamma(t_1)=3/4i+\eps/2$ and $\gamma(t_2)=i+\eps.$\\
The quantities $t_2, t_1, c(t_2), c(t_1)$ depend continuously on $\eps,$ as the domains $\Ha\setminus \gamma(0,t],$ 
$\Ha \setminus K_t$ depend continuously on $\eps$ w.r.t. kernel convergence at $\infty$ (see the proof of Theorem \ref{thm:1}).\\

For $\eps=0$ we have $K_{t_{2}}\setminus K_{t_{1}}= \gamma(t_1,t_2]$ and we obtain
\begin{equation*}t_2-t_1\underset{Lemma \ref{hcap} d)}{=}
 \hcap(g_{\gamma(0,t_1]}(\gamma(t_1,t_2]))\underset{Lemma \ref{hcap} c)}{>}\hcap(g_{K_{t_1}}(\gamma(t_1,t_2]))=c(t_2)-c(t_1).
 \end{equation*}

Here, we apply Lemma 18 c) for $A_1=g_{\gamma(0,t_1]}(\gamma(t_1,t_2])$ and 
$A_2 = g_{\gamma(0,t_1]}(K_{t_1}\setminus \gamma(0,t_1]).$ Note that $g_{K_{t_1}} = g_{A_2}\circ g_{\gamma(0,t_1]}.$

Now choose an $\eps>0$ so small that we still have \begin{equation}\label{ltone}\frac{c(t_2)-c(t_1)}{t_2-t_1}<1.\end{equation}
 Assume $c(t)$ is differentiable at $t=0$ in this case. Then $c$ is linear as we have seen before. 
 As $T=\hcap (\Gamma)<c(T)=\dot{c}(0)\cdot \hcap(\Gamma),$ we have $\dot{c}(0)>1.$\\
On the other hand, $\dot{c}(0)<1$ by \eqref{ltone}; a contradiction.
\begin{figure}[h]
	\centering
	\begin{overpic}[width=60mm]
		{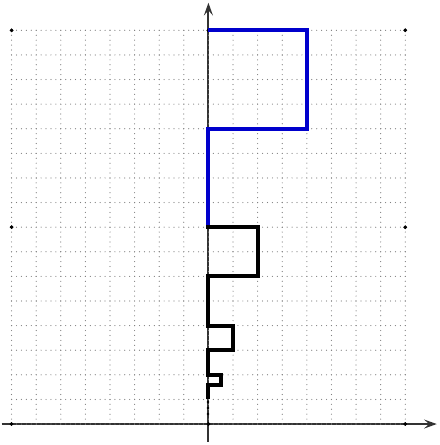}
		\put(70,69){\color{blue}$A$}
		\put(60,47){$\Gamma$}
	\end{overpic}
	\caption{$A$ and $\Gamma$ for $\eps=0.$ }
\end{figure}
\end{proof}

The following lemma gives the connection between the chordal and the radial case that we need
for our purpose. The proof is given in the appendix.

\begin{lemma}\label{chordal_radial_diff}
Let $\gamma_1$ and $\gamma_2$ be the parametrizations of two disjoint slits in a 
circular slit disk $\Omega$ with $\gamma_1(0)=\gamma_2(0)=1.$ 
In the following, $K_t$ is either defined by 
\begin{itemize}
 \item[(i)] $K_t=\gamma_1[0,t]$ for all $t$  or
 \item[(ii)] $K_t=\gamma_1[0,t]\cup\gamma_2[0,t]$ for all $t$.
\end{itemize}

Next, let $g_t$ be the normalized conformal mapping from $\Omega\setminus K_t$ 
onto a circular slit disk.

For $t$ small enough, we can map the hulls into the upper half-plane
$\Ha$ by the mapping $F(z):= -i\log(z)$ (with $\log(1)=0$) and
$A_t:= -i\log(K_t)$ will be a family of increasing $\Ha-$hulls.
Then we have:

$t\mapsto \lmr(g_t)$ is differentiable at $t=0$ if and only if
$t\mapsto \hcap(A_t)$ is differentiable at $t=0.$ 
In this case 
\[
	\frac{d}{dt}\hcap(A_t)(0) = 2\frac{d}{dt}\lmr(g_t)(0).
\]
\end{lemma}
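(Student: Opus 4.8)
The plan is to transport everything to the upper half-plane by the logarithm $F(z)=-i\log z$ and to compare the two capacities through the harmonic measure of the shrinking hull. First I would record the geometry of $F$: it maps $\D$ conformally onto the half-infinite strip $\{\zeta : \Im\zeta>0,\ -\pi<\Re\zeta\le\pi\}$, and since $\Re\zeta=\pm\pi$ both correspond to the negative real axis of $\D$, this is really a half-infinite \emph{cylinder} of circumference $2\pi$. Moreover $F(1)=0$, $F'(1)=-i$ (so $|F'(1)|=1$), and $F(0)=i\infty$. Thus the landing point $1$ of the hulls goes to $0\in\R$, the distinguished interior point $0$ goes to the far end $i\infty$, and $A_t=F(K_t)$ is a small $\Ha$-hull at the bottom of the cylinder, far from the gluing. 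On the chordal side, $\hcap(A_t)$ is read off at $i\infty$ from the formula $\hcap(A)=\lim_{y\to\infty}y\,\mathbf{E}^{iy}[\Im(B_{\tau_A})]$ already used in Lemma \ref{two_lines}.

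I would treat $\Omega=\D$ first, where $g_t$ is the map $h_t$ onto $\D$. Writing $G$ for the Green's function of $\D$ with pole at $0$, the identities $G_{\D}(z,0)=-\log|z|$ and $G_{\D\setminus K_t}(z,0)=-\log|g_t(z)|$ give, on expanding at $z=0$, that $\lmr(g_t)=u_t(0)$, where $u_t:=G_{\D}(\cdot,0)-G_{\D\setminus K_t}(\cdot,0)$ is the bounded harmonic function on $\D\setminus K_t$ vanishing on $\partial\D$ and equal to $G_{\D}(\cdot,0)$ on $K_t$. The key point is that on $\partial\D$ one has $G_{\D}(\xi,0)=-\log|\xi|=\Im F(\xi)$ \emph{exactly}, so after transplanting by $F$ the function $\tilde u_t:=u_t\circ F^{-1}$ is harmonic on the cylinder minus $A_t$, vanishes on the bottom circle, equals $\Im\zeta$ on $A_t$, and satisfies $\lmr(g_t)=\tilde u_t(i\infty)$.

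The factor $2$ now comes purely from the geometry of the cylinder. The associated half-plane function $\psi_{A_t}(\zeta)=\Im\zeta-\Im g_{A_t}(\zeta)$ has the same boundary data ($\Im\zeta$ on $A_t$, $0$ on $\R$) and, by the defining expansion $g_{A_t}(\zeta)=\zeta+\hcap(A_t)/\zeta+\dots$, its far field is the dipole $\hcap(A_t)\,\Im\zeta/|\zeta|^2$. A matched-asymptotics argument identifies $\tilde u_t$, at scales between the size of $A_t$ and the circumference $2\pi$, with this planar solution, while at the cylinder scale $\tilde u_t$ is the periodization of that dipole; expanding in the Fourier modes $e^{in\Re\zeta}e^{-|n|\Im\zeta}$, the nonconstant modes decay as $\Im\zeta\to\infty$ and the constant ($n=0$) mode is exactly $\tfrac12\hcap(A_t)$. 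Hence
\begin{equation*}
\lmr(g_t)=\tfrac12\,\hcap(A_t)\,(1+o(1))\qquad(t\to0).
\end{equation*}
Since both sides are nonnegative and vanish at $t=0$, and their ratio tends to $\tfrac12$, differentiability of one at $t=0$ is equivalent to differentiability of the other, and $\frac{d}{dt}\hcap(A_t)(0)=2\frac{d}{dt}\lmr(g_t)(0)$.

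For a general circular slit disk $\Omega$ I would reduce to the disk. By Theorem \ref{different_para} (case (i), $m=1$) and by Theorem \ref{Simply_multiply} (case (ii), the branch point case), differentiability at $0$ of $t\mapsto\lmr(g_t)$ for the $\Omega$-map is equivalent to differentiability at $0$ of $t\mapsto\lmr(h_t)$ for the simply connected map $h_t\colon\D\setminus K_t\to\D$; combined with the disk case this already yields the differentiability equivalence with $\hcap(A_t)$, noting that $A_t=F(K_t)$ does not depend on $\Omega$. The main obstacle is to match the \emph{derivatives} for general $\Omega$: here the circular slits of the image domain $D_t$ move with $t$ and contribute to $\lmr(g_t)$ at the same order as the hull (indeed $\lmr(g_t)=u_t(0)+\bigl(h_{D_t}(0)-h_{\Omega}(0)\bigr)$, with $h_D$ the harmonic correction making the Green's function vanish on the concentric slits), so the comparison is genuinely nonlocal. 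In case (i) I would close this via the distortion factor of Theorem \ref{The-KLEvsLE}, for which $\frac{d}{dt}\lmr(g_t)(0)=\alpha_1^2(0)\,\frac{d}{dt}\lmr(h_t)(0)$, together with $\alpha_1(0)=\lim_{t\to0}\alpha_1(t)=1$ (Lemma \ref{Lem-ContinuityAlpha}): near $z=1$ the boundaries of $\Omega$ and $\D$ coincide, both $g_t$ and $h_t$ tend to the identity there, so the comparison map $g_t\circ h_t^{-1}$ tends to a conformal map fixing $1$ and the arc $\partial\D$. In the branch point case (ii), where Theorem \ref{The-KLEvsLE} is unavailable, the same equality $\frac{d}{dt}\lmr(g_t)(0)=\frac{d}{dt}\lmr(h_t)(0)$ must instead be extracted directly from the $\tfrac12\hcap(A_t)$ asymptotic, by showing that the slit-correction term $h_{D_t}(0)-h_{\Omega}(0)$ absorbs exactly the $\Omega$-dependent distortion and restores the universal factor. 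Throughout, the technical heart is the uniform control of the $o(\cdot)$ errors as $t\to0$, so that the asymptotic proportionality upgrades to the stated differentiability equivalence and the exact identity.
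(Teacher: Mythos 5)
Your strategy is genuinely different from the paper's, and parts of it are sound: the cylinder picture with the factor $\tfrac12$ coming from periodizing the dipole is correct in spirit, and your case (i) reduction for general $\Omega$ via $\lambda_1(0)=\alpha_1^2(0)\mu_1(0)$ with $\alpha_1(0)=1$ (here $g_0$ and $h_{1;0}$ are both the identity, so this is immediate, no limiting argument needed) does work, since Lemma \ref{Lem-ConnectionMuLambda} gives both the equivalence and the equality of the derivatives. But two steps are asserted rather than proved, and they are the content of the lemma, not routine details. First, the core asymptotic $\lmr(g_t)=\tfrac12\hcap(A_t)(1+o(1))$ for $\Omega=\D$ rests on a matched-asymptotics ansatz; to make it a proof you would have to build a comparison function, e.g. the periodization $P_t(\zeta):=\sum_{n\in\IZ}\psi_{A_t}(\zeta-2\pi n)$, check that its boundary values on $\partial A_t$ deviate from the prescribed data $\Im\zeta$ only by $O(\hcap(A_t)\diam(A_t))$, and invoke the maximum principle to get $|\tilde u_t-P_t|=o(\hcap(A_t))$; you explicitly defer exactly this error control. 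Second, and more seriously, the branch point case (ii) for multiply connected $\Omega$ is not closed at all. Theorem \ref{Simply_multiply} only gives equivalence of differentiability of $\lmr(g_t)$ and $\lmr(h_t)$ at $0$, not equality of the derivatives, and Theorem \ref{The-KLEvsLE} is unavailable since $\gamma_1(0)=\gamma_2(0)$. Your own formula for $\lmr(g_t)$ shows why this is not cosmetic: the boundary data of $u_t$ on $K_t$ is $G_\Omega(\cdot,0)=-\log|z|-h_\Omega(z)$, and by the Hopf lemma the inward normal derivative $c$ of $h_\Omega$ at the point $1$ is strictly positive whenever $\Omega\neq\D$; transplanted, the data on $A_t$ reads $(1-c)\Im\zeta+O((\Im\zeta)^2)$, so the cylinder argument by itself produces $u_t(0)\approx(1-c)\tfrac12\hcap(A_t)$, off by a first-order factor. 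The needed compensation, that the slit-correction term $h_{D_t}(0)-h_\Omega(0)$ equals $-c\,\tfrac12\hcap(A_t)+o(\hcap(A_t))$, is a Hadamard-variation type statement about the moving circular slits of $D_t$; calling it something that ``must be extracted'' is naming the gap, not filling it.

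For comparison, the paper's proof sidesteps both issues with one exact computation that is uniform in (i)/(ii) and in the connectivity of $\Omega$: it represents both quantities by boundary integrals, $\lmr(g_t)=-\frac{1}{2\pi}\int_{s_t}\log|g_t^{-1}(\zeta)|\,|\intd\zeta|$ (valid for circular slit disks, from \cite{BoehmLauf}) and $\hcap(A_t)=\frac{1}{\pi}\int_{\tilde s_t}\Im(h_t^{-1}(w))\,|\intd w|$ (from \cite{MR1879850}), and relates them by the change of variables $T_t(\zeta)=g_t\bigl(\exp(i\,h_t^{-1}(\zeta))\bigr)$ together with the mean value theorem, yielding the exact identity $\hcap(A_t)=\frac{2}{|T_t'(\zeta_t)|}\,\lmr(g_t)$ with $\zeta_t\in\tilde s_t$; the factor $2$ is just the ratio of the constants $\frac1\pi$ and $\frac{1}{2\pi}$, and $|T_t'(\zeta_t)|\to1$ follows from a normality argument because $K_t$ shrinks to the point $1$. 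An identity of this type (it is also how the paper proves part a) of Theorem \ref{Simply_multiply}, namely $\lmr(g_t)=\lmr(h_t)(1+o(1))$) is precisely what you would need to repair case (ii); as written, your proposal does not contain it.
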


	Now we have all means to prove Theorem \ref{counterexample}
	and Theorem \ref{lines}.
		\begin{proof}[Proof of Theorem \ref{counterexample}]
	 In order to get the desired example in the radial case, we take the two slits 
	 from Theorem \ref{counter_H} and map them, at least locally around 0, into the unit disk
	 by the mapping $z\mapsto e^{iz}.$ This gives us two slits $\Gamma_1$, $\Gamma_2$ in the unit
	 disk with parametrizations $\gamma_1,$ $\gamma_2.$ According to Lemma
	 \ref{chordal_radial_diff}, case (i),
	 $\gamma_1(t)$ and $\gamma_2(t)$ are Loewner parametrizations at $t=0.$\\
	 However, the mapping $t\mapsto g_t$ is not differentiable at $t=0$ because of Lemma 
	 \ref{chordal_radial_diff}, case (ii), and Theorem \ref{Simply_multiply}.
	 
	\end{proof}
	
	\begin{proof}[Proof of Theorem \ref{lines}]
	 
	 Theorem \ref{lines} follows immediately from Theorem \ref{thm:1}, Lemma \ref{chordal_radial_diff}
	 and Theorem \ref{Simply_multiply}.
	 
	\end{proof}
	
\section*{Appendix}\label{appendix}

\begin{proof}[Proof of Lemma \ref{chordal_radial_diff}]
	First of all we set $\Omega_t:=\Omega\setminus K_t$ and $H_t:=\Ha\setminus A_t$
	Then we denote by $h_t:H_t\rightarrow \Ha$ the unique Riemann mapping with hydrodynamic
	normalization. Moreover we set $s_t:=g_t(\partial K_t\cap\partial \Omega_t)
	\subset \partial\D$ and $\tilde s_t:=h_t(\partial A_t\cap \partial H_t)\subset \partial\Ha$.
	Note that $g_t^{-1}$ and $h_t^{-1}$ can be extended continuously to $\partial\D$ and $\partial\Ha$
	by Theorem 2.1 from \cite{pommerenke-boundary}, so we find
	\begin{align*}
		\lmr(g_t) &= -\frac{1}{2\pi} \int_{s_t} \log|g_t^{-1}(\zeta)| |\intd\zeta|,\\
		\hcap(A_t)&= \frac{1}{\pi} \int_{\tilde s_t} \Im(h_t^{-1}(w)) |\intd w|.
	\end{align*}
	A rigorous proof of the first equation can be found in \cite{BoehmLauf}, equation ($\star$), 
	page 12. The second formula can be found, e.g., in \cite{MR1879850}, equation (2.5).\\
	
	If $t$ is small enough, $K_t$ will be close to 
	$1$, i.e for each $\epsilon >0$ we find a $t_0>0$ so that $K_t\subset B_\epsilon(1)$ for 
	all $t\in[0,t_0]$. By Schwarz reflection we see that the function
	\[
		T_t(\zeta):=g_t\big(\exp(i\cdot h_t^{-1}(\zeta))\big)
	\]
	can be extended to a conformal mapping in a small neighborhood around $\tilde s_k$ for all $t\in[0,t_0]$.
	Next we get with $h_t^{-1}(\zeta)=-i\log\big(g_t^{-1}(T_t(\zeta))\big)$ and 
	by usage of the Mean-value Theorem
	\begin{align*}
		\hcap(A_t) &= \frac{1}{\pi}\int_{\tilde s_t} \Im\Big( -i\log\big(g_t^{-1}(T_t(w))
			\big)\Big) |\intd w|
			=-\frac{1}{\pi} \int_{\tilde s_t} \log|g_t(T_t(w))| |\intd w|\\
			&=-\frac{1}{\pi} \int_{s_t} \log |g_t(\zeta)| \frac{1}{|T'_t(T_t^{-1}(\zeta))|}
				|\intd \zeta|= 2\frac{1}{|T_t'(\zeta_t)|} \lmr(K_t),
	\end{align*}
	where $\zeta_t\in \tilde s_t$. Using a normality argument analogous to the proof 
	of Lemma \ref{Lem-ContinuityAlpha},
	$|T_t'(\zeta)|$ tends uniformly to $1$ on a small neighborhood around $0$ as $t\to 0$. Thus 
	$|T_t'(\zeta_t)|\rightarrow 1$ as $t$ tends to zero. 
\end{proof}

\begin{proof}[Proof of Theorem \ref{Simply_multiply} (branch point case)]${}$
		\begin{selflist} 
			\item[a) ]Let $s_t:=g_t(\gamma_1[0,t]\cup\gamma_2[0,t])$ and $F_t:=h_t\circ g_t^{-1}$.
				Then equation ($\star$) on page 12 from \cite{BoehmLauf} gives us
				\[
					\lmr(g_t)= -\frac{1}{2\pi}\int_{s_t}\log|g_t^{-1}(\zeta)| |\intd\zeta| =
					 -\frac{1}{2\pi}\int_{s_t}\log|h_t^{-1}(F_t(\zeta))| |\intd\zeta|.
				\]
				Next we write $\tilde s_t := h_t(\gamma_1[0,t]\cup\gamma_2[0,t])$.
				Each $F_t$ can be extended analytically to $s_t$, so 
				an easy substitution combined with the Mean-value Theorem shows that
				\[
					\lmr(g_t)= -\frac{1}{2\pi}\int_{\tilde s_t}\log|h_t^{-1}(w)|
					\frac{1}{|F'_t(F_t^{-1}(w))|} |\intd w|
					=\frac{1}{|F'_t(\zeta_t)|}\lmr(h_t).
				\]
				Herein $\zeta_t\in s_t$. Finally $s_t$ tends to $\gamma_1(0)$ and $F_t$ can be 
				extended to an analytic function on $B_\epsilon(\gamma_1(0))$ for all $t$ small 
				enough and a small $\epsilon>0$. Consequently $F'_t(\zeta_t)\rightarrow 1$ as 
				$F_t$ tends uniformly to the identical mapping on $B_\epsilon(\zeta_0)$.
			\item[b) ]By using the same methods as in Lemma 10 from \cite{BoehmLauf} we get
				\[
					\log\frac{g_t^{-1}(z)}{z}=\frac{1}{2\pi}\int_{s_t} \log|g_t^{-1}(\zeta)|
					\Phi(\zeta,z;D_t) |\intd\zeta|.
				\]
				Substituting $z=g_t(w)$ in the above equality and using the Mean-value Theorem, we get
				\[
					\log\frac{g_0(w)}{g_t(w)} = \frac{1}{2\pi} \Phi(\zeta_t,g_t(w),D_t)
					\int_{ s_t}\log|g_t^{-1}(\zeta)| |\intd \zeta|= 
					- \Phi(\zeta_t,g_t(w),D_t) \lmr(g_t)
				\]
				with $\zeta_t\in s_t$. Hereby, the continuity of $\Phi$ follows from Lemma 19 from
				\cite{BoehmLauf}. Moreover this Lemma gives 
				$\Phi(\zeta_t,g_t(w),D_t)\rightarrow \Phi(\gamma_1(0),w,D_0)$ 
				as $t$ tends to 0, so the family 
				$t\mapsto g_t$ is differentiable at 0 iff $t\mapsto \lmr(g_t)$ is differentiable.
		\end{selflist}
		\mbox{}\\[-0.9\baselineskip]
		Summarized part a) proves (3.)$\Leftrightarrow$(4.), 
		part b) proves (1.)$\Leftrightarrow$(4.) and 
		part b) applied to $\Omega=\D$ proves (2.)$\Leftrightarrow$(3.).
	\end{proof}

\section*{Acknowledgment}

We would like to thank the referee for the time and effort taken to review our manuscript and
for the many helpful and constructive comments.

\providecommand{\bysame}{\leavevmode\hbox to3em{\hrulefill}\thinspace}
\providecommand{\MR}{\relax\ifhmode\unskip\space\fi MR }
\providecommand{\MRhref}[2]{%
  \href{http://www.ams.org/mathscinet-getitem?mr=#1}{#2}
}
\providecommand{\href}[2]{#2}

\end{document}